\theoremstyle{plain}
   \newtheorem{theorem}{Theorem}[subsection]
   \newtheorem{proposition}[theorem]{Proposition}
   \newtheorem{corollary}[theorem]{Corollary}
    \newtheorem{lemma}[theorem]{Lemma}
\theoremstyle{definition}
    \newtheorem{definition}[theorem]{Definition}
    \newtheorem{conjecture}[theorem]{Conjecture}
    \newtheorem{question}[theorem]{Question}
\newtheorem{example}[theorem]{Example}
\theoremstyle{remark}
\newtheorem{remark}{Remark}
\newcommand{\ZZ}{\mathbb{Z}}
\newcommand{\RR}{\mathbb{R}}
\newcommand{\PP}{\mathbb{P}}
\newcommand{\TT}{\mathbb{T}}
\newcommand{\EE}{\mathcal{E}}
\newcommand{\FF}{\mathcal{F}}
\newcommand{\LL}{\mathcal{L}}
\newcommand{\MM}{\mathcal{M}}
\newcommand{\KK}{\mathcal{K}}
\newcommand{\OO}{\mathcal{O}}
\newcommand{\DD}{\mathcal{D}}
\newcommand{\QQ}{\mathcal{Q}}
\newcommand{\tp}[1]{{{\mathbb{T}\mathbb{P}}^{#1}}}
\newcommand{\TP}{{\mathbb{T}\mathbb{P}}}
\newcommand{\trop}{\textsf{trop}}
\newcommand{\tropdet}{\text{tropdet}}
\newcommand{\troprank}{\text{troprank}}
\newcommand{\G}{\text{G}}
\newcommand{\Mat}{\text{Mat}}
\newcommand{\GL}{\text{GL}}
\newcommand{\Div}{\text{Div}}
\newcommand{\Pic}{\text{Pic}}
\newcommand{\Jac}{\text{Jac}}
\newcommand{\Gr}{\text{Gr}}
\newcommand{\Hom}{\textsf{Hom}}
\newcommand{\rank}{\text{rank}}
\newcommand{\id}{\text{id}}
\newcommand{\im}{\text{im}}
\renewcommand{\div}{\text{div}}
\renewcommand{\phi}{\varphi}
\begin{document}

\title{A tropical framework for using Porteous formula}
\author[A.~R.~Tawfeek]{Andrew R. Tawfeek}
\email[A.~R.~Tawfeek]{atawfeek@uw.edu}

\begin{abstract}
Given a rational polyhedral space $X$ (a tropical cycle with boundary, in the sense of Mikhalkin--Rau), one can define tropical vector bundles on $X$ having real or tropical fibers. By restricting attention to bounded rational sections of these bundles, one obtains characteristic classes that behave as expected classically. We develop further properties of these classes and use them to prove a tropical analogue of the splitting principle, which allows us to establish the foundations for Porteous' formula in this setting: a determinantal expression for the fundamental class of degeneracy loci in terms of Chern classes. The boundary framework is essential, as it allows the rank of a bundle morphism to drop at sedentary strata, giving degeneracy loci their expected codimension.
\end{abstract}

\maketitle

\section{Introduction}

In classical algebraic geometry, Porteous' formula describes the \textit{fundamental class} of degeneracy loci: the locus of points where the rank of a bundle morphism $\phi: \mathcal{E} \to \mathcal{F}$ over $X$ falls below an integer $k$.

\begin{theorem}[Porteous Formula]
Let $\phi: \mathcal{E} \to \mathcal{F}$ be a morphism of vector bundles of ranks $e$ and $f$ on a smooth variety $X$. If the scheme $D_k(\phi) \subseteq X$ has codimension $(e-k)(f-k)$, then its class is given by
$$[D_k(\phi)] = \Delta_{f-k}^{e-k} \left[ \frac{c(\mathcal{F})}{c(\mathcal{E})} \right].$$
\end{theorem}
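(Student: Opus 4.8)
The plan is to carry over the classical Kempf--Laksov resolution of degeneracy loci, using the tropical characteristic-class calculus and the splitting principle established in the preceding sections. First I would reduce to the split case: by pulling back along a suitable tower of tropical projective bundles I may assume $\mathcal{E}$ and $\mathcal{F}$ decompose as sums of line bundles, with Chern roots $a_1,\dots,a_e$ and $b_1,\dots,b_f$. This reduction is legitimate because the total Chern classes — hence the quotient $c(\mathcal{F})/c(\mathcal{E}) = \prod_{j}(1+b_j)\big/\prod_{i}(1+a_i)$ and the Schur determinant $\Delta_{f-k}^{e-k}$ — pull back functorially, the fundamental class $[D_k(\phi)]$ pulls back to that of the degeneracy locus of the pulled-back morphism, and the tropical projective-bundle map is injective on the relevant characteristic groups, so an identity proved upstairs descends.

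Next I would build the resolution. Let $\pi\colon \Gr(e-k,\mathcal{E}) \to X$ be the tropical Grassmann bundle of rank-$(e-k)$ subbundles of $\mathcal{E}$ (equivalently, an iterated tower of projective bundles), carrying its tautological subbundle $\mathcal{S} \hookrightarrow \pi^*\mathcal{E}$, and form the composite $\sigma\colon \mathcal{S}\hookrightarrow \pi^*\mathcal{E}\xrightarrow{\pi^*\phi}\pi^*\mathcal{F}$, viewed as a global bounded rational section of $\mathcal{S}^\vee\otimes\pi^*\mathcal{F}$. Its zero locus $Z$ is the set of pairs $(x,V)$ with $V\subseteq\ker\phi_x$; under the codimension hypothesis, $Z$ has the expected codimension $(e-k)f$ in $\Gr(e-k,\mathcal{E})$ and the restriction $\pi|_Z\colon Z\to D_k(\phi)$ is generically one-to-one (an isomorphism over the locus where $\rank\phi$ is exactly $k$), so $\pi_*[Z] = [D_k(\phi)]$. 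Since $Z$ is cut out by a section of a rank-$(e-k)f$ bundle meeting the zero section properly, the tropical self-intersection formula for the zero section gives $[Z] = c_{(e-k)f}(\mathcal{S}^\vee\otimes\pi^*\mathcal{F})\cap[\Gr(e-k,\mathcal{E})]$.

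It then remains to push this class forward along $\pi$. Expanding $c_{(e-k)f}(\mathcal{S}^\vee\otimes\pi^*\mathcal{F})$ in the Chern roots of $\mathcal{S}^\vee$ and of $\mathcal{F}$, applying the projection formula, and using the tropical Grassmann-bundle formula — the description of the characteristic ring of $\Gr(e-k,\mathcal{E})$ over that of $X$ via Segre classes of $\mathcal{E}$ — turns the pushforward into a universal identity among symmetric functions: pushforward of the monomials in the tautological roots yields Schur polynomials in the $b_j$ together with the inverse roots of $\mathcal{E}$, and the alternating combination assembles, by Jacobi--Trudi, into precisely $\Delta_{f-k}^{e-k}\big[c(\mathcal{F})/c(\mathcal{E})\big]$. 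Because every quantity here is a polynomial in Chern classes, this last manipulation is formally identical to its classical counterpart and can be imported wholesale.

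The step I expect to be the crux is the geometric input on the tropical side, not the symmetric-function algebra: one must verify that $Z$ is an honest tropical cycle of the expected codimension carrying the correct multiplicities, that $\pi|_Z$ has no hidden lattice-index contribution so that its "degree" is genuinely $1$, and that the Euler-class/self-intersection formula is actually available for $\mathcal{S}^\vee\otimes\pi^*\mathcal{F}$. Since tropical intersection products record multiplicities and genuine transversality is delicate, the codimension hypothesis will have to be supplemented (or reinterpreted through the bounded-rational-section formalism and the rational equivalence of the earlier sections) to guarantee that the tropical degeneracy locus is reduced and free of excess-intersection corrections; organizing that verification is where the real work lies.
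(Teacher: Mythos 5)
This statement is the \emph{classical} Porteous formula, quoted in the introduction purely as motivation; the paper gives no proof of it and does not claim one (the only theorem proved is the tropical rank-$0$ analogue, Theorem \ref{main-thm}). The argument you outline --- resolve $D_k(\phi)$ by the locus $Z \subseteq \Gr(e-k,\mathcal{E})$ of pairs $(x,V)$ with $V \subseteq \ker\phi_x$, identify $[Z]$ with the top Chern class of $\mathcal{S}^\vee\otimes\pi^*\mathcal{F}$ via the self-intersection formula, push forward, and assemble the answer by Jacobi--Trudi --- is exactly the standard Kempf--Laksov proof as presented in \cite{3264} and \cite{fulton2013intersection}, and as a sketch of the classical theorem it is sound, with the usual caveats about Cohen--Macaulayness of $Z$ and the generic injectivity of $\pi|_Z$ (which is where the codimension hypothesis enters).

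The problem is that you have phrased every step tropically, and in the framework of this paper almost none of the machinery you invoke exists. The paper proves only the rank-$0$ case precisely because that case needs no Grassmann bundle: $[D_0(\phi)]$ is identified with $c_{ef}(\mathcal{E}^\vee\otimes\mathcal{F})$ and computed via the splitting principle alone. The tropical Grassmann bundle $\trop(G(e-k,\mathcal{E}))$, its tautological subbundle, the universal exact sequence $0\to\mathcal{S}\to\rho^*\mathcal{E}\to\mathcal{Q}\to 0$, and the pushforward/Segre-class description of its Chow ring are all explicitly posed as \emph{open questions} in the Future Directions section; they are the obstruction to the higher-rank case, not a routine import from the classical theory. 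Moreover, in this framework the rank function $\rank(\phi)$ is locally constant (see the corollary following Lemma \ref{stalk-invariant} and Lemma \ref{rank-morph}), so $D_k(\phi)$ is a union of connected components of $X$ and can never have the expected codimension $(e-k)(f-k)$ unless that number is zero; the steps where you require $Z$ to meet the zero section properly and $\pi|_Z$ to be generically one-to-one therefore have no tropical content as stated and would need to be replaced by a purely class-theoretic definition of $[D_k(\phi)]$. In short: as a proof of the classical statement your outline is correct but proves something the paper merely cites; as a proof of a tropical statement it rests on constructions the paper itself identifies as unavailable, which is the genuine gap.
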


The right-hand side of the Porteous formula is a Sylvester determinant whose entries depend only on the Chern classes of $\mathcal{F}$ and $\mathcal{E}$. In other words, by expressing a subspace of interest as a degeneracy locus of a bundle morphism, one can deduce topological information from knowledge of the bundles alone.

We adapt this statement to the tropical setting using the language of tropical vector bundles and Chern classes, largely building on the work of \cite{chern}. We prove a tropical analogue of Porteous' formula in the rank-$0$ case (Theorem \ref{main-thm}), with the resulting fundamental class residing in the tropical Chow ring:

\begin{theorem}[Tropical Porteous Formula in Rank $0$]
Consider a morphism of tropical vector bundles $\phi: \mathcal{E} \to \mathcal{F}$ with bounded matrix entries over a rational polyhedral space $X$, where the bundles have ranks $e$ and $f$, respectively. Suppose $D_0(\phi)$ has the expected codimension $ef$. Then
$$[D_0^\phi (\mathcal{E}, \mathcal{F})] = \Delta_f^e \left( \frac{c_{[t]}(\mathcal{F})}{c_{[t]}(\mathcal{E})} \right),$$
where $c_{[t]}(-)$ denotes the Chern polynomial of the vector bundle.
\end{theorem}

Along the way, we develop the necessary constructions for tropical vector bundles, including Chern classes, projectivization, intersection products, and a tropical splitting principle.

\subsection{Acknowledgments}
The author is grateful to Farbod Shokrieh for introducing him to this problem and for his mentorship. He also thanks Tuomas Tajakka and Siddharth Mathur for helpful conversations regarding the splitting lemma, Leopold Mayer regarding decomposition of bundles over $\TP^1$, and Nathan Pflueger for discussions of applications of a tropical Porteous formula. The author thanks the anonymous referee for a thorough and insightful report that motivated the adoption of the rational polyhedral space framework and the Hom bundle argument in the proof of the main theorem.

\section{Tropical Preliminaries}

We recall the foundational terminology of polyhedral complexes, following the framework of rational polyhedral spaces developed in \cite{main}. This framework naturally incorporates \textit{boundary} (via the extended tropical affine space $\TT^n$), which is essential for defining degeneracy loci of the expected codimension. The reader unfamiliar with these notions may treat them in analogy to their classical algebraic geometric counterparts; as we will see, the manner in which we interact with them --- for instance in manipulations with Chern classes --- is almost indistinguishable from the classical theory.

An equivalent approach to tropical vector bundles uses \textit{integral affine manifolds} and a more sheaf-theoretic language; see \cite{shaw} or \cite{gross2023semi}.

\subsection{Rational Polyhedral Spaces}

We take the polytopal perspective following \cite{main, polycomplexes}. The key departure from the setup of \cite{polycomplexes} is that we work in the \textit{extended tropical affine space} $\TT^n = (\RR \cup \{-\infty\})^n$ rather than $\RR^n$. This allows polyhedra to have faces meeting the boundary of $\TT^n$, which is the mechanism by which morphism rank can drop.

\begin{definition}[Sedentarity]
For a point $p \in \TT^n$, the \textit{sedentarity} of $p$ is the set $I(p) = \{i \in [n] \mid p_i = -\infty\}$. The \textit{sedentarity order} $\text{sed}(p) = |I(p)|$ counts the number of coordinates equal to $-\infty$. For each subset $I \subseteq [n]$, the \textit{sedentary stratum} $\TT^n_I = \{p \in \TT^n \mid I(p) = I\}$ is naturally identified with $\RR^{n - |I|}$, and we have
$$\TT^n = \bigsqcup_{I \subseteq [n]} \TT^n_I.$$
The open stratum $\TT^n_\emptyset = \RR^n$ is called the \textit{interior}.
\end{definition}

\begin{definition}[Rational polyhedral complex in $\TT^n$]
    A \textit{rational polyhedral complex} $X$ in $\TT^n$ is a weighted, pure-dimensional, rational, finite polyhedral complex in $\TT^n$ (with underlying lattice $\ZZ^n \subseteq \RR^n$) that satisfies the \textit{balancing condition} for every $\tau \in X^{(k-1)}$:
    $$\sum_{\sigma : \tau < \sigma} \omega(\sigma) v_{\sigma/\tau} = 0$$
    where $v_{\sigma/\tau}$ denotes the primitive integer generator of the ray obtained from projecting $\sigma$ to the vector quotient $\RR^n/V_\tau$, with $V_\tau$ denoting the linear vector space spanned by $\tau$. Faces of $X$ may be \textit{sedentary}, meaning they are contained in a boundary stratum $\TT^n_I$ for some $I \neq \emptyset$. When a ridge $\tau$ is sedentary with $\tau \subseteq \TT^n_I$, the balancing condition is formulated within the stratum $\RR^{n-|I|}$ in which $\tau$ has its interior; see \cite{main} Ch.~4 for details.
\end{definition}

We call the top-dimensional polyhedra in a pure-dimensional $X$ the \textit{facets} and the codimension one polyhedra the \textit{corners} (or \textit{ridges}) of $X$. We denote by $|X|$ the \textit{support}, which is the union of all facets of $X$ having non-zero weight. Two rational polyhedral complexes are \textit{equivalent} if they admit a common refinement with the same induced weights. A \textit{rational polyhedral space} (or \textit{tropical cycle}) $X$ is an equivalence class of rational polyhedral complexes. When $X$ has no sedentary faces (i.e., $|X| \subseteq \RR^n$), this recovers the framework of \cite{polycomplexes}.

\begin{remark}
The framework of rational polyhedral spaces in $\TT^n$ was developed by Mikhalkin--Rau \cite{main}. In this setting, the boundary strata of $\TT^n$ give the ambient space the structure of a manifold with corners. When a polyhedral complex $X$ has faces meeting these boundary strata, continuous functions on $|X|$ (including matrix entries of a bundle morphism) can degenerate to $-\infty$ as one approaches the boundary, enabling rank to drop. This is precisely the mechanism by which degeneracy loci acquire positive codimension.
\end{remark}

\subsection{Tropical Intersection Theory}

The intersection-theoretic machinery of Allermann--Rau \cite{polycomplexes} extends to rational polyhedral spaces in $\TT^n$; see \cite{main} for the general development. We recall the key definitions below, noting that they are stated for rational polyhedral spaces (which may have sedentary faces) but specialize to the original formulations of \cite{polycomplexes} when restricted to $\RR^n$.

\begin{definition}[Abstract tropical cycles]
Let $((X,|X|),\omega_X)$ be an $n$-dimensional rational polyhedral complex. Its equivalence class $[((X,|X|), \omega_X)]$ is called an \textit{(abstract) tropical $n$-cycle}. The \textit{set of $n$-cycles} is denoted by $Z_n$. We additionally define $|((X,|X|), \omega_X)| := |X^*|$. Like in the affine case, an $n$-cycle $((X,|X|),\omega_X)$ is called an \textit{(abstract) tropical variety} if $\omega_X(\sigma) \geq 0$ for all $\sigma \in X^{(n)}$.
\end{definition}

As an illustration, consider the following figure depicting an abstract tropical polyhedral complex, which becomes a tropical variety when each cone $\sigma$ is assigned weight one.

\begin{figure}[h]
\centering
\includegraphics[width=.3\linewidth]{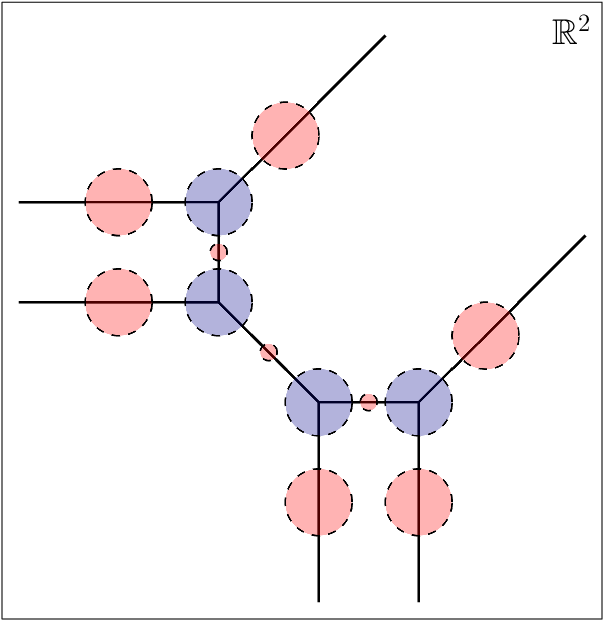} \quad \quad \quad 
\includegraphics[width=.15\linewidth]{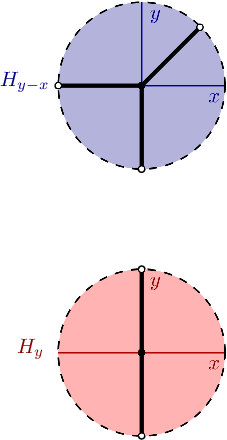}

\caption{An abstract tropical polyhedral complex, as well as a covering with open fans. Here, $H_f \subseteq \RR^2 $ denotes the fan having cones $\{ f(x) \leq 0\}$,  $\{f(x) = 0 \}$, and $\{f(x) \geq 0 \}$.}
\end{figure}

A tropical subcycle is a tropical cycle living within a larger ambient cycle, inheriting the expected structure from the ambient space.

\begin{definition}[Abstract tropical subcycles]
Let $C \in Z_n$ and $D \in Z_k$ be two tropical cycles. Then $D$ is called an \textit{(abstract) tropical cycle in $C$} or a \textit{subcycle of $C$} if there exists a representative $(((Z,|Z|), \omega_Z), \{\Psi_\tau\})$ of $D$ and a reduced representative $(((X,|X|), \omega_X), \{\Phi_\sigma\})$ of $C$ such that
\begin{enumerate}
	\item[(a)] $(Z,|Z|) \trianglelefteq (X,|X|)$,
	\item[(b)] the tropical structures on $Z$ and $X$ agree, i.e. for every $\tau \in Z$ the maps $\Psi_\tau \circ \Phi^{-1}_{C_{Z,X}(\tau)}$ and $\Phi_{C_{Z,X}(\tau)} \circ \Psi_\tau^{-1}$ are integer affine linear where defined.
\end{enumerate}
The \textit{set of tropical $k$-cycles} in $C$ is denoted by $Z_k(C)$.
\end{definition}

In order to define Cartier divisors, we need the analogous notions of rational and regular invertible functions.

\begin{definition}[Rational functions]
Let $C$ be an abstract $k$-cycle and let $U$ be an open set in $|C|$. A \textit{(non-zero) rational function on $U$} is a continuous function $\phi: U \to \RR$ such that there exists a representative $(((X,|X|,\{m_\sigma\}_{\sigma \in X}),\omega_X),\{M_\sigma\}_{\sigma \in X})$ of $C$ such that for each face $\sigma \in X$ the map $\phi \circ m_\sigma^{-1}$ is locally integer affine linear (where defined). The \textit{set of all non-zero rational functions on $U$} is denoted by $\mathcal{K}_C^*(U)$ (or $\mathcal{K}^*(U)$).
\end{definition}

\begin{definition}[Regular invertible functions]
With the same notation as above, if for each face $\sigma \in X$ the map $\phi \circ M_\sigma^{-1}$ is locally integer affine linear (where defined), $\phi$ is called \textit{regular invertible}. The \textit{set of all regular invertible functions on $U$} is denoted by $\mathcal{O}_C^*(U)$ (or $\mathcal{O}^*(U)$).
\end{definition}

\begin{remark}
It is not difficult to see that for an affine $k$-cycle $C$, we have that $(\mathcal{K}^*(C), \odot)$ is an abelian group and $(\OO^*(C), \odot)$ is a subgroup.
\end{remark}

With these ingredients in place, we can define divisors.

\begin{definition}[Cartier divisors] 
Let $C$ be an abstract $k$-cycle.  The set of \textit{Cartier divisors on $C$} is denoted by $\Div(C)$.
\begin{itemize}
	\item a \textit{representative of a Cartier divisor on $C$} is a finite set $\{(U_1,\phi_1), \dots, (U_n, \phi_n)\}$, where $\{U_i\}$ is an open covering of $|C|$ and $\phi_i \in \mathcal{K}^*(U_i)$ are rational functions on $U_i$ such that for $i \neq j$, we have that\footnote{This is equivalent to saying $$\frac{\phi_i|_{U_i \cap U_j}}{\phi_j|_{U_i \cap U_j}} = \phi_i|_{U_i \cap U_j} \odot (\phi_j|_{U_i \cap U_j})^{-1} = \phi_i|_{U_i \cap U_j} - \phi_j|_{U_i \cap U_j} \in \mathcal{O}^*(U_i \cap U_j).$$}
$$\frac{\phi_i|_{U_i \cap U_j}}{\phi_j|_{U_i \cap U_j}} \in \mathcal{O}^*(U_i \cap U_j).$$
	\item We define the \textit{product} of two representatives of Cartier divisors to be
$$\{(U_i, \phi_i)\} \odot \{(V_j, \psi_i)\} := \{(U_i \cap V_j, \phi_i \odot \psi_j)\}.$$
	\item We say two Cartier divisor representatives are \textit{equivalent} if
$$\frac{\{(U_i, \phi_i)\}}{\{(V_j, \psi_j)\}} = \{ (W_k, \gamma_k) \} \quad \text{ where } \gamma_k \in \mathcal{O}^*(W_k) \text{ for each $k$.}$$
\end{itemize}
It follows from the above constructions that the identity element of $\Div(C)$ is $[\{(|C|,0)\}]$, where $0$ denotes the constant zero function. We define the \textit{group of affine Cartier divisors of $C$} to be the quotient group
$\Div(C) := \KK^*(C)/\OO^*(C).$
\end{definition}

Lastly, the notion of a Weil divisor will be necessary for defining the intersection product, through which we later interact with higher Chern classes.

\begin{definition}[Associated Weil divisors]
Let $C$ be an affine $k$-cycle and $\phi \in \KK^*(C)$ a rational function on $C$. Let furthermore $(X,\omega)$ be a representative of $C$ on whose cones $\phi$ is affine linear. We define the \textit{Weil divisor} of $\phi$, denoted by $\div(\phi)$, to be
$$\varphi \cdot C := \left[ \left( \bigcup_{i=0}^{k-1} X^{(i)},\omega_\varphi \right) \right] \in Z_{k-1}^\text{aff}(C)$$
where the weight function $\omega_\varphi: X^{(k-1)} \to \ZZ$ is given by
$$\omega_\varphi(\tau) = \sum_{\begin{smallmatrix} \sigma \in X^{(k)} \\ \tau < \sigma \end{smallmatrix}} \varphi_\sigma(\omega(\sigma)v_{\sigma/\tau}) - \varphi_\tau\left( \sum_{\begin{smallmatrix} \sigma \in X^{(k)} \\ \tau < \sigma \end{smallmatrix}} \omega(\sigma) v_{\sigma/\tau} \right)$$
with the $v_{\sigma/\tau}$ denoting arbitrary representatives of the normal vectors $u_{\sigma/\tau} \in \Gamma_\sigma/\Gamma_\tau$. Furthermore, if $D$ is an arbitrary subcycle of $C$, then we define $\phi \cdot D := \phi|_{|D|} \cdot D$.
\end{definition}

\begin{definition}[Affine intersection product, \cite{polycomplexes}]
Let $C$ be an affine $k$-cycle and $\phi \in \Div(C)$ a Cartier divisor. The natural bilinear mapping\footnote{Where, recall, if $D$ is a subcycle of $C$, then $\phi \cdot D = \div(\phi|_{|D|})$.}
\[\begin{tikzcd}[row sep = tiny]
	{\text{Div}(C) \times Z_k^\text{aff}(C)} && {Z_{k-1}^\text{aff}(C)} \\
	{([\varphi],D)} && {\varphi\cdot D :=[\varphi] \cdot D }
	\arrow[from=1-1, to=1-3]
	\arrow[maps to, from=2-1, to=2-3]
\end{tikzcd}\]
is called the \textit{affine intersection product}.
\end{definition}

\section{Tropical Vector Bundles}

\subsection{Tropical Linear Algebra}

\begin{definition}[Tropical matrices]
A \textit{tropical matrix} will be a matrix with entries in the tropical semi-ring $(\TT, \oplus, \odot)$ where $\TT := \RR \cup \{-\infty\}$ and
$$a \oplus b := \max\{a,b\} \quad \text{ and } \quad a \odot b := a+b.$$
We will be using the following notation:
\begin{itemize}
	\item $\Mat(m\times n, \TT)$ for the set of tropical $m \times n$ matrices,
	\item $\G(r \times s) \subseteq \Mat(r\times s, \TT)$ for the subset  having at most one finite entry in each row,
	\item $\G(r) \subseteq \G(r \times r)$ for the subset with exactly one finite entry in every row and column.
\end{itemize}
Lastly, when multiplying matrices $A \in \Mat(m \times n, \TT)$ and $B \in \Mat(n \times p, \TT)$, we take their tropical product $A \odot B := (c_{ij}) \in \Mat(m \times p, \TT)$ where $c_{ij} = \bigoplus_{k=1}^n a_{ik} \odot b_{kj}$.
\end{definition}

\begin{remark} \label{fix}
A matrix $A \in \G(r \times s)$, for an $x \in \RR^s$, may send $x$ to $A \odot x \not \in \RR^r$ (i.e. contain entries that are $-\infty$). If we need to interpret such a matrix as a map  $f_A: \RR^s \to \RR^r$, we redefine any entries that are $-\infty$ in $A \odot x$ to be zero.
\end{remark}

For a permutation $\sigma \in S_n$, we denote by $E_\sigma \in \Mat(n \times n, \TT)$ the matrix
$$e_{ij} := \begin{cases} 0 &\text{if $j = \sigma(i)$} \\ -\infty &\text{otherwise,} \end{cases}$$
which plays a role analogous to the permutations matrices, but in the tropical setting. Furthermore, for choices of $a_i \in \RR$, we denote by $D(a_1,\dots, a_n) \in \G(n)$ the matrix with $a_1, a_2,\dots, a_n$ along the diagonal and $-\infty$ elsewhere.

\begin{remark} \label{help}
Using this, we may then write any matrix $M \in \G(n)$ as 
$$M = E_\sigma \odot D(a_1,\dots,a_n)$$
for some $\sigma \in S_n$ and $a_i \in \RR$. This makes it evident that $\G(n)$ is a group under tropical matrix multiplication, having identity element $D(0,\dots,0)$, which we denote by $E_n$ to distinguish it from $I_n \in \Mat(n \times n, \RR)$.
\end{remark}

\begin{lemma}
The group $\G(n) \subseteq \Mat(n \times n, \TT)$ is precisely the collection of invertible tropical matrices:
$$\GL_n(\TT) = G(n) = \{ A \in \Mat(n \times n, \TT) \ | \ (\exists B \in \Mat(n \times n, \TT)): \ A \odot B = B \odot A = E_n \}.$$
\end{lemma}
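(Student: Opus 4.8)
The plan is to prove the two inclusions of the displayed equality separately; the inclusion $\G(n) \subseteq \GL_n(\TT)$ is essentially already in hand, while $\GL_n(\TT) \subseteq \G(n)$ carries the content.

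For $\G(n) \subseteq \GL_n(\TT)$: by Remark \ref{help} every $M \in \G(n)$ factors as $M = E_\sigma \odot D(a_1,\dots,a_n)$ for some $\sigma \in S_n$ and $a_i \in \RR$, and $\G(n)$ is a group under $\odot$ with identity $E_n$; explicitly one checks $M \odot \big(D(-a_1,\dots,-a_n)\odot E_{\sigma^{-1}}\big) = \big(D(-a_1,\dots,-a_n)\odot E_{\sigma^{-1}}\big)\odot M = E_n$. Thus $M$ has a two-sided $\odot$-inverse lying in $\G(n) \subseteq \Mat(n\times n,\TT)$, so $M \in \GL_n(\TT)$.

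For $\GL_n(\TT) \subseteq \G(n)$: the device I would use is the \emph{Boolean shadow}. Let $\mathbb{B} = (\{0,1\},\vee,\wedge)$ be the Boolean semiring and define $\pi\colon \TT \to \mathbb{B}$ by $\pi(-\infty) = 0$ and $\pi(x) = 1$ for $x \in \RR$. The crucial observation is that max-plus addition admits no cancellation, that is $a \oplus b = -\infty$ precisely when $a = b = -\infty$; combined with $a \odot b = -\infty$ precisely when $a = -\infty$ or $b = -\infty$, this shows $\pi$ is a homomorphism of semirings, $\pi(a\oplus b) = \pi(a)\vee\pi(b)$ and $\pi(a\odot b) = \pi(a)\wedge\pi(b)$. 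Applying $\pi$ entrywise then carries tropical matrix multiplication to Boolean matrix multiplication and sends $E_n$ to the Boolean identity. Hence if $A \odot B = B \odot A = E_n$, then $\pi(A)$ and $\pi(B)$ are mutually inverse matrices over $\mathbb{B}$.

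It remains to invoke — and, since it is short, to include a proof of — the classical fact that the only invertible matrices over $\mathbb{B}$ are the permutation matrices: from $PQ = QP = I$ one checks that the rule sending $i$ to the unique column $k$ with $P_{ik} = Q_{ki} = 1$ is well defined and bijective (using $(PQ)_{ij} = 0$ for $i \neq j$), and that $P$ can have no further nonzero entries, so $P$ is a permutation matrix. Taking $P = \pi(A)$ we conclude that $\pi(A)$ is a permutation matrix, i.e.\ $A$ has exactly one finite entry in each row and each column, which is exactly the statement $A \in \G(n)$. The only genuinely delicate point is the no-cancellation remark that makes $\pi$ a homomorphism; everything else is bookkeeping. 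An alternative, slightly more hands-on route avoids $\mathbb{B}$ altogether and runs the permutation-matrix argument directly on the finite-support pattern of $A$, using the identities $(A\odot B)_{ii} = 0$ and $(A\odot B)_{ij} = -\infty$ for $i\neq j$ together with the analogous ones for $B\odot A$.
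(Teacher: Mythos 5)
Your proof is correct and complete. It is worth noting that the paper deliberately omits a proof of this lemma, offering only the hint that it is ``a short computation\dots as these matrices all behave in a manner similar to permutation matrices,'' so your write-up actually supplies an argument the paper does not. The easy inclusion $\G(n)\subseteq\GL_n(\TT)$ via the factorization $M=E_\sigma\odot D(a_1,\dots,a_n)$ is exactly what Remark~\ref{help} provides, and your explicit two-sided inverse $D(-a_1,\dots,-a_n)\odot E_{\sigma^{-1}}$ checks out. For the converse, your Boolean-shadow homomorphism $\pi\colon\TT\to\mathbb{B}$ is a clean formalization of the intuition the paper gestures at: the key point, which you correctly isolate, is that $\oplus=\max$ admits no cancellation ($a\oplus b=-\infty$ forces $a=b=-\infty$), which is precisely why invertibility over $\TT$ is so much more rigid than over a ring, and it reduces the whole question to the standard fact that the invertible Boolean matrices are the permutation matrices. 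Your sketch of that fact is sound: uniqueness of the witness $k$ with $P_{ik}=Q_{ki}=1$ follows from $(QP)_{kk'}=0$ for $k\neq k'$, injectivity of $i\mapsto k$ and the absence of extra finite entries both follow from $(PQ)_{ii'}=0$ for $i\neq i'$. The ``hands-on'' alternative you mention at the end, running the same argument directly on the support pattern of $A$ and $B$, is essentially the computation the paper says it omitted (and is the style of the proof of Lemma~\ref{troprank-props}); either route is fine, and the shadow version has the advantage of cleanly separating the semiring-homomorphism step from the purely combinatorial one.
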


The proof is a short computation (resembling those in the proof of Lemma \ref{troprank-props} below), as these matrices behave similarly to permutation matrices. In fact, $\GL_n(\TT)$ is isomorphic to $S_n \ltimes \RR^n$, so one may think of invertible tropical matrices as permutation matrices with $0$'s replaced by $-\infty$ and $1$-entries replaced by elements of $\RR$.

\begin{definition}[Tropical determinant]
Consider a tropical matrix $A \in \Mat(n \times n, \TT)$. We define the \textit{tropical determinant} to be
$$\text{tropdet}(A) = \bigoplus_{\sigma \in S_n} (A_{1,\sigma(1)} \odot A_{2,\sigma(2)} \odot \cdots \odot A_{n, \sigma(n)})$$
\end{definition}

\begin{lemma} \label{permmatrixrep}
Consider a tropical matrix $A \in \G(n \times n)$. We then have that
$$\tropdet(A) \neq -\infty \iff A \in \G(n).$$
That is to say, the only tropical matrices with ``non-trivial" determinants are precisely those that are invertible.
\end{lemma}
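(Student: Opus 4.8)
The plan is to strip the tropical operations down to what they actually compute and then read off a purely combinatorial statement about the support pattern of $A$. The key first observation: since $\odot$ is ordinary addition, the term indexed by $\sigma$ in the defining sum,
$$A_{1,\sigma(1)} \odot A_{2,\sigma(2)} \odot \cdots \odot A_{n,\sigma(n)} = \sum_{i=1}^n A_{i,\sigma(i)},$$
equals $-\infty$ as soon as a single factor is $-\infty$; and since $\oplus$ is maximum, $\tropdet(A) \neq -\infty$ holds precisely when there is at least one permutation $\sigma \in S_n$ with $A_{i,\sigma(i)} \in \RR$ for every $i$ — i.e. a ``finite transversal'' of $A$. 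So the lemma is the assertion that a matrix in $\G(n \times n)$ admits a finite transversal if and only if it lies in $\G(n)$.

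For the backward implication I would assume $A \in \G(n)$: each row and each column has exactly one finite entry. Defining $\sigma(i)$ to be the column index of the unique finite entry in row $i$ gives a map $\{1,\dots,n\} \to \{1,\dots,n\}$, and the ``one finite entry per column'' condition is exactly what forces $\sigma$ to be a bijection, hence a permutation; then $\sum_i A_{i,\sigma(i)}$ is a finite sum, so $\tropdet(A) \neq -\infty$. For the forward implication I would start from a permutation $\sigma$ witnessing $\tropdet(A) \neq -\infty$. Since $A \in \G(n \times n)$, row $i$ has \emph{at most} one finite entry, and we have exhibited one, namely $A_{i,\sigma(i)}$; hence every row has exactly one finite entry. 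Because $\sigma$ is a bijection, the cells $(i,\sigma(i))$ hit every column, so each column has at least one finite entry; and if some column $j$ had finite entries in two distinct rows $i_1 \neq i_2$, then by the just-established uniqueness per row we would have $\sigma(i_1) = j = \sigma(i_2)$, contradicting injectivity of $\sigma$. So every column has exactly one finite entry, i.e. $A \in \G(n)$.

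I do not expect a genuine obstacle here — the argument is a short bookkeeping exercise. The one point requiring care is not to conflate the defining hypothesis of $\G(n\times n)$ (at most one finite entry \emph{per row}) with the stronger row-and-column condition defining $\G(n)$: the whole content of the forward direction is using the permutation coming from a nonzero determinant term to upgrade the row condition to the column condition as well. It may be worth noting in passing that this is the tropical shadow of the classical fact that a matrix over a field has nonzero determinant only if its support contains a permutation; the extra hypothesis $A \in \G(n\times n)$ is precisely what lets us argue directly and avoid invoking any matching-theoretic input.
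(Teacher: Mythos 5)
Your proof is correct, and it takes a genuinely cleaner route than the paper's. The paper argues by writing $A = E_\sigma \odot D(\beta_1,\dots,\beta_n)$ with $\beta_i \in \TT$ --- equivalently, assuming ``without loss of generality'' that all finite entries sit on the diagonal --- and then reading off $\tropdet(A) = \sum_i \beta_i$. That normal form presupposes that no two rows place their finite entries in the same column, which is not guaranteed by membership in $\G(n\times n)$ alone: the matrix $\left(\begin{smallmatrix} 0 & -\infty \\ 0 & -\infty \end{smallmatrix}\right)$ has at most one finite entry per row yet cannot be brought to diagonal form by row and column permutations, so the paper's reduction quietly assumes part of the very column condition that the forward direction is supposed to establish. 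Your transversal argument sidesteps this: you extract a permutation $\sigma$ directly from a non-vanishing determinant term and then upgrade the ``at most one finite entry per row'' hypothesis to the full row-and-column condition using injectivity of $\sigma$, and the degenerate case of two rows sharing a column is handled correctly because such a matrix admits no finite transversal and hence has $\tropdet(A) = -\infty$, consistent with it lying outside $\G(n)$. The only thing the paper's computation buys beyond your argument is the explicit value $\tropdet(A) = \sum_i \beta_i$ for $A \in \G(n)$, which your backward direction also produces for free since the finite transversal of such a matrix is unique. Your closing remark correctly identifies where the hypothesis $A \in \G(n\times n)$ enters; without it the equivalence fails, as the paper's own Remark \ref{badexample} shows.
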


\begin{proof}
The reverse direction is clear, so we focus on the forward direction. Recall that if $A \in \G(n\times n)$, then $A$ has \textit{at most} one finite entry in each row. Without loss of generality, assume the finite entries (if they exist) of $A$ lie along the diagonal, i.e.
$$A = \begin{pmatrix}
a_1 & -\infty & \cdots & -\infty \\
-\infty & a_2 & \cdots & -\infty \\
\vdots & \vdots & \ddots & \vdots \\
-\infty & -\infty & \cdots & a_n
\end{pmatrix}$$
where $a_1,\dots,a_n \in \TT = \RR \cup \{-\infty\}$. Observe then for any $\sigma \in S_n$,
$$A_{1,\sigma(1)} \odot A_{2,\sigma(2)} \odot \cdots \odot A_{n, \sigma(n)} = -\infty \iff (\exists i \in [n]): \ A_{i,\sigma(i)} = -\infty.$$
Therefore we may conclude that
\begin{align*}
\tropdet(A) &= \bigoplus_{\sigma \in S_n} (A_{1,\sigma(1)} \odot A_{2,\sigma(2)} \odot \cdots \odot A_{n, \sigma(n)}) \\
&= \max_{\sigma \in S_n} \{A_{1,\sigma(1)} + A_{2,\sigma(2)} + \cdots + A_{n, \sigma(n)}\} \\
&= \max \{ -\infty, \dots, -\infty, \sum_{i=1}^n a_i, -\infty, \dots, -\infty\} \\
&= a_1 + a_2 + \cdots + a_n.
\end{align*}
And so if we write a tropical matrix $A \in G(n \times n)$ as $A_\sigma \odot D(\beta_1,\dots,\beta_n)$ for $\beta_i \in \TT$, we have that
$$\tropdet(A) = \sum_{i=1}^n \beta_i$$
which will be finite (not $-\infty$) if and only if each $\beta_i$ is finite, i.e. $A \in G(n) \subseteq G(n \times n)$, thus $A$ is an invertible tropical matrix.
\end{proof}

The above statement is reminiscent of \cite[Theorem 3.1]{troplinear}; the reader interested in pursuing the tropical linear algebra framework further is directed there.

\begin{remark} \label{badexample}
Observe that this does not hold for a general tropical matrix in $\Mat(n \times n, \TT)$:
$$\tropdet \begin{pmatrix} 0 & 0 \\ 0 & -\infty \end{pmatrix} = (0 \odot -\infty) \oplus (0 \odot 0) = -\infty \oplus 0 = 0,$$
but the above matrix has more than one finite entry in its first row, so it is not in $\G(2)$.
\end{remark}

\begin{definition}[Tropical rank]
We now define for $A \in \G(n \times m)$ its \textit{tropical rank}, which is crafted in analogy with the classical case:

$$\troprank(A) = k \iff \begin{cases} \text{all $(k+1) \times (k+1)$ minors vanish, and} \\ \text{there exists a non-vanishing $k \times k$ minor.} \end{cases}$$
Where we say a minor \textit{vanishes} if it equals $-\infty$.
\end{definition}

The ``correct" notion of rank for tropical matrices is a matter of debate, but the definition above satisfies the properties we require. Other variants, such as \textit{Barvinok rank} and \textit{Kapranov rank}, are related as follows:

\begin{theorem}[\cite{develin2003rank}]
For every matrix $M$ with entries in the tropical semiring,
$$\troprank(M) \leq \text{Kapranov rank}(M) \leq \text{Barvinok rank}(M).$$
Both of these inequalities can be strict.
\end{theorem}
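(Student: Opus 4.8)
The plan is to work with the three ranks as originally defined in \cite{develin2003rank}, which we recall. The \emph{Barvinok rank} of $M$ is the least $r$ for which $M = \bigoplus_{k=1}^{r}\big(a^{(k)}_i \odot b^{(k)}_j\big)_{ij}$ with $a^{(k)},b^{(k)}$ vectors over $\TT$; the \emph{Kapranov rank} is the least rank of a matrix $\widetilde{M}$ over the Puiseux field $\CC\{\{t\}\}$ whose entrywise valuation recovers $M$ (using $\val\big(\sum_i c_i t^{i}\big) = -\min\{\,i : c_i \neq 0\,\}$, so that $\val$ carries $+,\cdot$ to $\oplus,\odot$); and the \emph{tropical rank} is the largest $r$ admitting an $r\times r$ submatrix $N$ that is tropically nonsingular, i.e. for which the maximum in $\tropdet(N)=\max_{\sigma}\sum_i N_{i\sigma(i)}$ is attained by a unique permutation. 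For $M\in\G(n\times m)$ this coincides with the minor-based definition above, since by Lemma \ref{permmatrixrep} a square submatrix has finite tropical determinant exactly when it lies in $\G(k)$, where the maximizing permutation is forced to be unique. I would prove the two inequalities in turn and treat strictness at the end.

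First, $\text{Kapranov rank}(M)\le\text{Barvinok rank}(M)$. Write $M=\bigoplus_{k=1}^{r}\big(a^{(k)}_i\odot b^{(k)}_j\big)$ with $r$ the Barvinok rank, and lift the $k$-th summand to the rank-one matrix $\widetilde{M}^{(k)}=\big(\lambda_k\,t^{-a^{(k)}_i-b^{(k)}_j}\big)_{ij}$ over $\CC\{\{t\}\}$, with $\lambda_k\in\CC^{\times}$ to be chosen. Then $\widetilde{M}:=\sum_{k=1}^{r}\widetilde{M}^{(k)}$ has rank at most $r$, and its $(i,j)$ entry has lowest exponent $-\max_k\big(a^{(k)}_i+b^{(k)}_j\big)=-M_{ij}$, with coefficient the sum of those $\lambda_k$ for which $a^{(k)}_i+b^{(k)}_j$ is maximal. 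Running over all pairs $(i,j)$ imposes finitely many nonvanishing linear conditions on $(\lambda_1,\dots,\lambda_r)$, so a generic choice gives $\val(\widetilde{M}_{ij})=M_{ij}$ for all $i,j$; thus $\widetilde{M}$ is a Kapranov lift of rank $\le r$.

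Next, $\troprank(M)\le\text{Kapranov rank}(M)$. Take a Kapranov lift $\widetilde{M}$ of rank $r=\text{Kapranov rank}(M)$ and let $\widetilde{N}$ be an arbitrary $(r+1)\times(r+1)$ submatrix, with tropical counterpart $N$ inside $M$; since $\rank\,\widetilde{M}=r$ we have $\det\widetilde{N}=0$. In the Leibniz expansion $\det\widetilde{N}=\sum_{\sigma}\mathrm{sgn}(\sigma)\prod_i\widetilde{N}_{i\sigma(i)}$ the $\sigma$-term has valuation $\sum_i N_{i\sigma(i)}$, so if the maximum $\tropdet(N)$ were attained by a unique $\sigma_0$, that term would strictly dominate all others in valuation and its leading coefficient — a product of nonzero scalars — would survive, forcing $\det\widetilde{N}\neq0$, a contradiction. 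Hence every $(r+1)$-square submatrix of $M$ is tropically singular, so $\troprank(M)\le r$.

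Finally, strictness: I would record the explicit separating matrices of \cite{develin2003rank}. For $\text{Kapranov rank}<\text{Barvinok rank}$ one exhibits a small matrix together with a low-rank Puiseux lift, while a counting argument on the supports of tropical rank-one summands shows no shorter decomposition exists. For $\troprank<\text{Kapranov rank}$ the cleanest witness is the $7\times7$ matrix built from the point--line incidences of the Fano plane: direct inspection of its $4\times4$ minors gives tropical rank $3$, whereas a Kapranov lift of rank $3$ would realize the Fano matroid over a field of characteristic $\neq2$, which is impossible, so its Kapranov rank is at least $4$. This last point is the genuine obstacle: the lower bounds on Kapranov rank are unrealizability statements for matroids (equivalently, for tropical linear spaces), and it is their verification — not either inequality above — that carries the real content, while the genericity step in the first inequality is routine once phrased as the non-vanishing of finitely many polynomials in the lift parameters.
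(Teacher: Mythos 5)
The paper imports this theorem from \cite{develin2003rank} without proof, so there is no in-paper argument to compare against; your sketch correctly reconstructs the standard arguments of that source, and your reconciliation of the minor-vanishing definition of $\troprank$ on $\G(n\times m)$ with the unique-maximizing-permutation definition needed for general $M$ is the right preliminary step. Both inequalities are handled exactly as in Develin--Santos--Sturmfels (generic rank-one lifts of a Barvinok decomposition for the first, and the Leibniz expansion of a vanishing $(r+1)\times(r+1)$ minor of a Kapranov lift for the second), and the strictness witnesses you name --- in particular the Fano cocircuit matrix, whose Kapranov rank exceeds $3$ precisely because the Fano matroid is unrealizable in characteristic $\neq 2$ --- are the correct ones; the only point to tighten is that for matrices with arbitrary real entries the lift $t^{-a^{(k)}_i-b^{(k)}_j}$ lives in a field of generalized power series with real exponents rather than in $\CC\{\{t\}\}$ proper, a convention \cite{develin2003rank} also has to adopt.
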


Geometrically, every $(d \times n)$-matrix has an image that is a tropical polytope in $\RR^d/\RR\mathbf{1}$ (see \cite[\S5.2]{tropbook}). The tropical rank of a matrix is the dimension of this tropical polytope \textit{plus one}.

\begin{example}
Unlike working over $\RR$, (tropical) rank is not generally additive over $\TT$:
$$\troprank \begin{pmatrix} 2 & -\infty & -\infty \\ -\infty & -\infty & 3 \end{pmatrix} = 2 \qquad \text{and} \qquad \troprank \begin{pmatrix} 0 & -\infty &-\infty \\ -\infty & 0 & -\infty \\
-\infty & -\infty & -\infty \end{pmatrix} = 2$$
since $\left| \begin{smallmatrix} 2 & -\infty \\ -\infty & 3 \end{smallmatrix} \right| = 5 \neq -\infty$ and $\left| \begin{smallmatrix} 0 & -\infty \\ -\infty & 0 \end{smallmatrix} \right| = 0 \neq -\infty$, yet
$$\begin{pmatrix} 2 & -\infty & -\infty \\ -\infty & -\infty & 3 \end{pmatrix} \odot \begin{pmatrix} 0 & -\infty &-\infty \\ -\infty & 0 & -\infty \\ -\infty & -\infty & -\infty \end{pmatrix} = \begin{pmatrix} 2 & -\infty & -\infty \\ -\infty & -\infty & -\infty \end{pmatrix}$$
but the resulting matrix has tropical rank $1$. Thus for matrices $A \in \G(n \times m)$ and $B \in G(m \times s)$, generally
$$\troprank(A \odot B) \neq \troprank(A) + \troprank(B).$$
However, repositioning the finite entries via a tropical change of basis restores equality. We make this precise below.
\end{example}

\begin{lemma} \label{troprank-props}
Let $A \in \G(n \times m)$ and take $N \in \G(n)$ and $M \in \G(m)$. Then
$$\troprank(A) = \troprank(N \odot A \odot M)$$
and $\troprank(A)$ is the number of columns having at least one finite entry.
\end{lemma}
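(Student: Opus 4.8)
The plan is to reduce everything to the behavior of tropical determinants under multiplication by elements of $\G(n)$ and $\G(m)$, which act essentially as signed permutations with an additive shift (Remark \ref{help}). First I would establish the key multiplicativity fact: if $N \in \G(n)$ and $B \in \Mat(n \times n, \TT)$, then $\tropdet(N \odot B) = \tropdet(N) \odot \tropdet(B)$, and similarly on the right. Writing $N = E_\sigma \odot D(a_1, \dots, a_n)$ via Remark \ref{help}, one checks that $N \odot B$ has rows which are the rows of $B$ permuted by $\sigma$ and each shifted (tropically multiplied) by the corresponding $a_i$; since the tropical determinant is a sum over $S_n$ of products of one entry per row and per column, permuting the rows merely re-indexes the sum and the shifts contribute $\sum a_i = \tropdet(N)$ to every term. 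This is the routine computation the lemma statement alludes to being "resembling" the proof of Lemma \ref{permmatrixrep}.

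Next I would upgrade this to arbitrary square submatrices. Take $A \in \G(n \times m)$ and let $I \subseteq [n]$, $J \subseteq [m]$ with $|I| = |J| = k$; write $A_{I,J}$ for the corresponding $k \times k$ submatrix. Because $N \in \G(n)$ and $M \in \G(m)$ are (tropically scaled) permutation matrices, the matrix $N \odot A \odot M$ has $(i,j)$-entry equal to $a_i + A_{\sigma^{-1}(i), \tau(j)} + b_j$ where $\sigma, \tau$ are the underlying permutations of $N, M$ and $a_i, b_j$ the scalars. Hence any $k \times k$ submatrix of $N \odot A \odot M$ is of the form $N' \odot A_{I,J} \odot M'$ for suitable $I, J$ with $|I|=|J|=k$ and suitable $N' \in \G(k)$, $M' \in \G(k)$ (namely the diagonal-scaling pieces, since restricting a permutation matrix to an appropriate set of rows/columns gives a permutation matrix on $k$ elements). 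By the multiplicativity fact, $\tropdet(N' \odot A_{I,J} \odot M') = (\sum_{i} a'_i) + \tropdet(A_{I,J}) + (\sum_j b'_j)$, which is finite if and only if $\tropdet(A_{I,J})$ is finite (the scalars $a'_i, b'_j$ are always finite). Therefore the $k \times k$ minors of $N \odot A \odot M$ vanish exactly when the $k \times k$ minors of $A$ vanish, which gives $\troprank(N \odot A \odot M) = \troprank(A)$ directly from the definition.

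For the second assertion, that $\troprank(A)$ equals the number of columns of $A$ containing at least one finite entry: since $A \in \G(n \times m)$ has at most one finite entry per row, using the column-permutation freedom just established I can assume the finite entries occupy the first $r$ columns, where $r$ is the number of columns with a finite entry; moreover at most one finite entry sits in each of those columns is \emph{not} guaranteed, but I can further use row permutations and the pigeonhole structure to extract, from among the rows, an $r \times r$ submatrix that (after reordering) is diagonal with finite diagonal entries — this is possible precisely because each of the $r$ nonzero columns has some finite entry and each row contributes to at most one column, so a system-of-distinct-representatives / Hall-type selection picks $r$ rows realizing the $r$ columns. That submatrix has finite tropdet by the computation in Lemma \ref{permmatrixrep}, so $\troprank(A) \geq r$; conversely any $(r+1) \times (r+1)$ submatrix must use a column that is entirely $-\infty$, forcing every term in its tropdet to be $-\infty$, so $\troprank(A) \leq r$. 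The main obstacle is this last Hall-type selection step — making precise that the finite entries, one-per-row, can be arranged to exhibit an $r \times r$ invertible (i.e. in $\G(r)$) submatrix — but it follows from a straightforward counting argument once the at-most-one-per-row structure of $\G(n \times m)$ is invoked.
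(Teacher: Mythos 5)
Your proof is correct and follows essentially the same route as the paper: decompose $N$ and $M$ via Remark \ref{help} into a permutation part and a diagonal part, observe that the finite diagonal shifts cannot change whether a minor equals $-\infty$, and reduce the column-count claim to exhibiting an $r\times r$ submatrix lying in $\G(r)$. If anything you are more careful than the paper, which asserts $\troprank(A)=\ell$ without spelling out the existence of the non-vanishing $\ell\times\ell$ minor -- the ``Hall-type'' selection you worry about is indeed automatic, since each row carries at most one finite entry and therefore distinct columns are necessarily witnessed by distinct rows.
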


\begin{proof}
    First we prove that $\troprank(A)$ is the number of columns having at least one finite entry. The matrices in $G(n \times m)$ are only subject to having at most one finite entry in each row. As such, there may be empty (all $-\infty$) columns or rows, which do not contribute the rank. Assume then without loss of generality that $A$ has no such empty rows or columns. Then the dimension of $A$ is $k \times \ell$ with $k \geq \ell$, and since a minor is a determinant of a square matrix, we must have that $\troprank(A)=\ell$. But as $\ell$ was the number of columns having a finite entry, we are done.

    \medskip

    To prove the second statement, by Remark \ref{help}, we may write
    $$N = E_\sigma \odot D(\beta_1,\dots,\beta_n) \quad \text{ and } \quad M = E_\tau \odot D(\gamma_1,\dots,\gamma_m)$$
    where $\beta_i, \gamma_j \in \RR$. This allows us to express the product as
    $$N \odot A \odot M = E_\sigma \odot D(\beta_1,\dots,\beta_n) \odot A \odot E_\tau \odot D(\gamma_1,\dots,\gamma_m).$$
    We may assume without loss of generality that $\beta_i,\gamma_j$ are all zero, as the only aspect that would distinguish rank is whether entries are infinite. But as $D(0,\dots,0)$ is the identity matrix, this simplifies this expression immediately:
    $$E_\sigma \odot E_n \odot A \odot E_\tau \odot E_m = E_\sigma \odot A \odot E_\tau.$$
    This situation is now analogous to multiplying with permutation matrices, where $E_\sigma$ is permuting the rows of $A$ and $E_\tau$ the columns. It follows that $\oplus_{\alpha \in S_k} \odot_{i=1}^k a_{i,\alpha(i)} \neq -\infty$ if and only if $\oplus_{\alpha \in S_k} \odot_{i=1}^k a_{\sigma(i),\tau(\alpha(i))} \neq -\infty$, hence $\troprank(A) = \troprank(N \odot A \odot M)$.
\end{proof}

Many common matrix operations carry over to the tropical setting, but multiplication and addition must be replaced by their tropical counterparts.

\begin{definition}[Direct sum and tensor products of tropical matrices]
Consider two tropical matrices $A = (a_{ij}) \in \Mat(m \times n, \TT)$ and $B = (b_{st}) \in \Mat(k \times \ell, \TT)$, we then define
\begin{itemize}
	\item the \textit{direct sum} to be the $(m+k)\times (n+\ell)$ tropical matrix
$$A \oplus B = \begin{pmatrix} A & -\infty \\ -\infty & B  \end{pmatrix},$$
	\item the \textit{tensor product} to be the $mk \times n\ell$ tropical matrix
$$A \otimes B = \begin{pmatrix} a_{11} \odot B & \cdots & a_{1n} \odot B \\ \vdots & \ddots & \vdots \\ a_{m1} \odot B & \cdots & a_{mn} \odot B  \end{pmatrix}.$$
\end{itemize}
\end{definition}

It follows that the tensor product of tropical matrices expands over their direct sum, i.e.
$$\left( \bigoplus_{i=1}^n A_i \right) \otimes  B = \bigoplus_{i=1}^n (A_i \otimes B),$$
which can be seen by expanding out the entries.

\subsection{Tropical Vector Bundles}
We now introduce the primary objects of study. Most of these notions were introduced or formulated for our purposes in \cite{chern}, to which the reader is directed for further details on tropical vector bundles.

\begin{definition}[Tropical vector bundles] \label{vbdef}
Let $X$ be a rational polyhedral space. A \textit{tropical vector bundle over $X$ of rank $r$} is a rational polyhedral space $F$ together with a morphism $\pi: F \to X$ and a finite open covering $\{U_1, \dots, U_s\}$ of $X$ as well as a homeomorphism $\Phi_i: \pi^{-1}(U_i) \rightarrow U_i \times \RR^r$ for every $1 \leq i \leq s$ such that
\begin{itemize}
	\item[(a)] for all $i$ we have that the diagram
\[\begin{tikzcd}
	{\pi^{-1}(U_i)} & {U_i \times \mathbb{R}^r} \\
	& {U_i}
	\arrow["{\Phi_i}", from=1-1, to=1-2]
	\arrow["{p_1}", two heads, from=1-2, to=2-2]
	\arrow["\pi"', from=1-1, to=2-2]
\end{tikzcd}\]
		commutes, where $p_1: U_i \times \RR^r \to U_i$ is the projection to the first factor,
	\item[(b)] for all $i,j$ the composition $p_j^{(i)} \circ \Phi_i: \pi^{-1}(U_i) \to \RR$ is a regular invertible function, where $p^{(i)}_j: U_i \times \RR^r \to \RR$ is projection onto the $j$th component of $\RR^r$, i.e. $(x,(a_1,\dots,a_r)) \mapsto a_j$,
	\item[(c)] for every $i,j \in \{1,\dots,s\}$ there exists a \textit{transition map} $M_{ij}: U_i \cap U_j \to G(r)$ such that
			$$\Phi_j \circ \Phi_i^{-1}: (U_i \cap U_j) \times \RR^r \rightarrow (U_i \cap U_j) \times \RR^r$$
		is given by $(x,a) \mapsto (x,f_{M_{ij}(x)}(a))$ and the entries of $M_{ij}$ are regular invertible functions on $U_i \cap U_j$ or constantly $-\infty$, 
	\item[(d)] there exist representatives $F_0$ of $F$ and $X_0$ of $X$ such that
			$$F_0 = \{\pi^{-1}(\tau) \ | \ \tau \in X_0\}$$
and $\omega_{F_0}(\pi^{-1}(\tau)) = \omega_{X_0}(\tau)$ for all maximal polyhedral $\tau \in X_0$.
\end{itemize}
An open set $U_i$ with the map $\Phi_i: \pi^{-1}(U_i) \rightarrow U_i \times \RR^r$ is called a \textit{local trivialization} of $F$.
\end{definition}

\begin{remark}
In the original source \cite{chern}, the fibers are all taken to be $\RR = \TT^\times$, but a reader familiar with other work in this area (e.g. \cite{shaw} and \cite{gross2023semi}) will notice that at times the fibers are instead taken to be $\TT$. This hinges on condition $(c)$ above, where we funnel transition maps $M_{ij}$ through $f_{M_{ij}}$. Throughout the work we will at times take fibers that are $\TT$ instead, and this is meant to imply that our transition maps no longer go through this extra step to be re-interpreted as a map on $\RR^n$.
\end{remark}

There has been considerable work with the above definition of tropical vector bundles (introduced by Allermann \cite{chern}). In \cite{gross2022principal}, \cite{gross2023semi}, the authors studied the case where $X$ is an \textit{integral affine manifold} $(X,\text{Aff}_X)$, with $\text{Aff}_X$ a sheaf of affine functions.

Adopting this language, we may then interpret tropical vector bundles of rank $n$ as being $S_n \ltimes \text{Aff}_X^n$-torsors. This terminology continues in the expected way (e.g. morphisms of tropical vector bundles become morphisms of torsors, etc.). Most interestingly though is the following result they obtain, which applies just as well in our setting:

\begin{proposition}[\cite{gross2023semi}]
    The category of tropical vector bundles of rank $n \in \ZZ_{\geq 1}$ on an integral affine manifold $X$ is equivalent to the category of pairs $(\widetilde{X} \overset{\pi}{\longrightarrow} X, \mathcal{L})$ consisting of a degree $n$ free cover $\pi: \widetilde{X} \to X$ and a tropical line bundle $\mathcal{L} \to X$.
\end{proposition}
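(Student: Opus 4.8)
The plan is to exhibit functors in both directions and check they are mutually inverse. Starting from a tropical vector bundle $F \to X$ of rank $n$, interpreted (via the preceding discussion) as an $S_n \ltimes \mathrm{Aff}_X^n$-torsor given by transition data $M_{ij} = E_{\sigma_{ij}} \odot D(a^{(ij)}_1, \dots, a^{(ij)}_n)$ on overlaps $U_i \cap U_j$, I would first project onto the $S_n$-part: the assignment $U_i \cap U_j \mapsto \sigma_{ij} \in S_n$ satisfies the cocycle condition (since the quotient $S_n \ltimes \mathrm{Aff}_X^n \to S_n$ is a group homomorphism), hence defines an $S_n$-torsor, equivalently a degree $n$ free cover $\pi \colon \widetilde X \to X$. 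Next, to extract the line bundle, I would use the sign/product homomorphism: composing $S_n \ltimes \mathrm{Aff}_X^n \to \mathrm{Aff}_X$ sending $(\sigma, (a_1,\dots,a_n)) \mapsto a_1 \odot \cdots \odot a_n = \tropdet(M_{ij})$ (which is a group homomorphism by the argument in Lemma \ref{permmatrixrep}, since $\tropdet(M \odot M') = \tropdet(M) \odot \tropdet(M')$ on $\G(n)$) yields a $1$-cocycle valued in $\mathrm{Aff}_X^\times$, hence a tropical line bundle $\mathcal{L} \to X$. One checks equivalent bundles (related by a $0$-cochain in $\G(n)$) give isomorphic pairs, so this is well-defined on isomorphism classes and functorial.

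For the reverse direction, given $(\pi \colon \widetilde X \to X, \mathcal{L})$, I would build a rank $n$ tropical vector bundle as follows: a degree $n$ free cover is classified by a cocycle $\sigma_{ij} \colon U_i \cap U_j \to S_n$ on some cover $\{U_i\}$ of $X$ trivializing both $\widetilde X$ and $\mathcal{L}$, and $\mathcal{L}$ by a cocycle $\ell_{ij} \colon U_i \cap U_j \to \mathrm{Aff}_X^\times$. I would then set the transition maps to be $M_{ij} := E_{\sigma_{ij}} \odot D(\ell_{ij}, 0, \dots, 0) \in \G(n)$ — placing all the line-bundle data in the first coordinate — and verify the cocycle condition $M_{ij} \odot M_{jk} = M_{ik}$ on triple overlaps; this is where the semidirect product structure enters, since permuting before scaling reshuffles which coordinate carries $\ell_{\bullet\bullet}$, but the total "tropical determinant" still multiplies correctly, and a short computation (again in the style of Lemma \ref{troprank-props}, tracking where the single finite off-diagonal-or-diagonal entry lands) shows the products agree. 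This defines $F \to X$ satisfying conditions (a)–(d) of Definition \ref{vbdef}.

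The last step is to check the two functors are quasi-inverse. One composite: starting from $(\widetilde X, \mathcal{L})$, building $M_{ij}$ as above, then projecting to $S_n$ recovers $\sigma_{ij}$ (hence $\widetilde X$) on the nose, and taking $\tropdet(M_{ij}) = \ell_{ij} \odot 0 \odot \cdots \odot 0 = \ell_{ij}$ recovers $\mathcal{L}$. The other composite: starting from $F$ with arbitrary transition data $M_{ij} = E_{\sigma_{ij}} \odot D(a^{(ij)}_1,\dots,a^{(ij)}_n)$, the reconstructed bundle has transitions $E_{\sigma_{ij}} \odot D(\sum_k a^{(ij)}_k, 0,\dots,0)$, and I must produce an isomorphism of bundles, i.e. a $0$-cochain $N_i \colon U_i \to \G(n)$ with $M_{ij} = N_j^{-1} \odot \widetilde M_{ij} \odot N_i$. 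Constructing the $N_i$ is essentially a statement that any $\G(n)$-cocycle is cohomologous to one supported in the first coordinate — which follows because $\mathrm{Aff}_X^n$ as an $S_n$-module decomposes (after inverting suitable things) so that the "diagonal" part is the only cohomologically nontrivial piece, or more concretely because we can use partitions-of-unity-style gluing of the $\mathbb{R}^{n-1}$-valued data, which is a soft (fine-sheaf) vanishing. \textbf{I expect this last cohomological triviality — that every rank $n$ cocycle is equivalent to a "determinant-in-the-first-slot" cocycle — to be the main obstacle}, and I would isolate it as a lemma: the key point is that the kernel of $S_n \ltimes \mathrm{Aff}_X^n \to S_n \times \mathrm{Aff}_X^\times$ is (a form of) $\mathrm{Aff}_X^{n-1}$, whose torsors are trivial because $\mathrm{Aff}_X$ is a fine/soft sheaf on the polyhedral complex $X$, so $H^1$ vanishes. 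With that lemma in hand, naturality in $X$ is routine and the equivalence of categories follows.
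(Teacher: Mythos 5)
There is a genuine gap, and it sits exactly where you flagged it: the claimed cohomological triviality is false, and the root cause is that your determinant-based decomposition is not the right one. The sheaf $\text{Aff}_X$ is not fine or soft --- there are no partitions of unity by integer affine functions --- and if it were, the same vanishing would give $H^1(X,\text{Aff}_X)=\text{Pic}(X)=0$, i.e.\ every tropical line bundle would be trivial, which is false. Concretely, your two functors are not quasi-inverse: take the trivial degree-$2$ cover $\widetilde{X}=X\sqcup X$ (so $\sigma_{ij}=e$ throughout) and compare $\mathcal{L}\oplus\mathcal{L}^\vee$ with $\mathcal{O}_X\oplus\mathcal{O}_X$ for a nontrivial line bundle $\mathcal{L}$. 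Both have trivial associated cover and trivial ``tropical determinant'' line bundle, yet they are non-isomorphic rank-$2$ bundles, so your forward functor is not injective on isomorphism classes and no choice of inverse can repair this.

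The fix is that the line bundle in the pair must live on the cover $\widetilde{X}$, not on $X$ (the displayed statement reads $\mathcal{L}\to X$, but the surrounding discussion --- ``taking $\widetilde{X}$ as a circle winding around twice with a line bundle'' and the later reference to ``line bundles on $n$-fold covering spaces'' --- and the cited source make clear the intended base is $\widetilde{X}$). With that reading no cohomology vanishing is needed: the $S_n$-part of the cocycle $M_{ij}=E_{\sigma_{ij}}\odot D(a^{(ij)}_1,\dots,a^{(ij)}_n)$ builds the cover exactly as you describe, and the $n$-tuple $(a^{(ij)}_k)$, read sheet-by-sheet upstairs, \emph{is} an $\text{Aff}$-valued $1$-cocycle on $\widetilde{X}$: the entry $a^{(ij)}_k$ glues the line over the $k$-th sheet of $U_i$ to the line over the $\sigma_{ij}(k)$-th sheet of $U_j$. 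The inverse functor is the pushforward $\pi_*\mathcal{L}$, whose fiber over $x$ is $\bigoplus_{y\in\pi^{-1}(x)}\mathcal{L}_y$. This is a Shapiro-type identification of $S_n\ltimes\text{Aff}_X^n$-cocycle data downstairs with $\text{Aff}_{\widetilde{X}}$-cocycle data upstairs, an honest bijection rather than an argument that some $H^1$ vanishes; in the degenerate example above it correctly records the unordered pair $\{\mathcal{L},\mathcal{L}^\vee\}$ as a line bundle on $X\sqcup X$.
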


That is, a tropical vector bundle of rank $n$ on $X$ may be constructed by passing to a covering space $\widetilde{X}$ and assembling line bundle fibers over each point. The simplest example takes $X$ to be a circle with a rank-$2$ bundle and $\widetilde{X}$ a double cover with a line bundle.

In perhaps a different flavor, \cite{tropschemes} generalizes this definition of tropical vector bundles to apply more broadly to tropical/monoidal schemes, proving in Proposition 5.1 that for a pair $(X,\mathcal{O}_X)$ of a topological space with a sheaf (of semirings) of $\TT$-valued functions, that there is an equivalence of categories between topological $\TT$-vector bundles on $X$ (as defined in Definition \ref{vbdef}) and locally free $\mathcal{O}_X$-modules, given by sending a vector bundle to its sheaf of continuous sections. They later prove an analogue of the above equivalence of isomorphism classes of rank-$n$ topological $\TT$-vector bundles with line bundles on $n$-fold covering spaces for locally connected paracompact Hausdorff spaces in \cite[Proposition 5.5]{tropschemes}.

\begin{definition}[Pull-back of vector bundles]
Let $\pi: F \to X$ be a vector bundle of rank $n$ with an open covering $U_1, \dots, U_s$ and transition maps $M_{ij}$. Let $f:Y \to X$ be a morphism of rational polyhedral spaces. Then the \textit{pullback bundle} $\pi': f^*F \to Y$ is the vector bundle we obtain by gluing the patches $f^{-1}(U_1) \times \RR^n, \dots, f^{-1}(U_s) \times \RR^n$ along the transition maps $M_{ij} \circ f$. Hence we have the following commutative diagram
\[\begin{tikzcd}[sep=scriptsize]
	{f^*F} && F \\
	\\
	Y && X
	\arrow["f", from=3-1, to=3-3]
	\arrow["{\pi'}"', dashed, from=1-1, to=3-1]
	\arrow["\pi"', from=1-3, to=3-3]
	\arrow["{f'}", dashed, from=1-1, to=1-3]
\end{tikzcd}\]
where $f'$ and $\pi'$ are locally given by
\[\begin{tikzcd}[row sep=tiny]
	{f': f^{-1}(U_i) \times \mathbb{R}^n} & {U_i \times \mathbb{R}^n} & {\text{and}} & {\pi':f^{-1}(U_i) \times \mathbb{R}^n} & {f^{-1}(U_i)} && {} \\
	{(y,a)} & {(f(y),a)} & {} & {(y,a)} & y. && {}
	\arrow[maps to, from=2-1, to=2-2]
	\arrow[from=1-1, to=1-2]
	\arrow[from=1-4, to=1-5]
	\arrow[maps to, from=2-4, to=2-5]
\end{tikzcd}\]
\end{definition}

\begin{definition}[Subbundles]
Let $\pi: F \to X$ be a vector bundle with an open trivialization $U_1,\dots, U_s$ with corresponding homeomorphisms $\Phi_i$. A subcycle $E \in Z_{\ell}(F)$ is called a \textit{subbundle of rank $r$ of $F$} if $\pi|_E: E \to X$ is a bundle of rank $r$ such that for $1 \leq i \leq s$ we have homeomorphisms
$$\Phi_i|_{(\pi|_E)^{-1}(U_i)}: (\pi|_{E})^{-1}(U_i) \rightarrow U_i \times \langle e_{j_1}, \dots, e_{j_r'} \rangle_\RR$$
for some $1 \leq j_1 < \cdots < j_{r'} \leq r$, where $e_j$ are the standard basis vectors in $\RR^r$.
\end{definition}

Allermann noted the following as Remark 1.13 in \cite{chern}, but we state it as a proposition to emphasize its importance. Together with the definition of quotient bundles below, it implies that all short exact sequences of tropical vector bundles split.

\begin{proposition}
If $\pi: F \to X$ is a vector bundle of rank $n$ with a subbundle $E$ of rank $r$, then there exists a subbundle $E'$ of $F$ of rank $n-r$ such that $F \cong E \oplus E'$.
\end{proposition}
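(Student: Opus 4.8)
The plan is to lean on the fact that the structure group of a tropical vector bundle is $\G(n)=\GL_n(\TT)$, which by Remark~\ref{help} consists only of \emph{monomial} matrices $E_\sigma\odot D(a_1,\dots,a_n)$ --- permutation matrices with real numbers in place of the $1$'s. Thus each transition map $M_{ij}$ of $F$ acts on fibers by a coordinate permutation followed by a coordinate rescaling, and in particular it carries a coordinate subspace $\langle e_s:s\in S\rangle_\RR$ of $\RR^n$ onto the coordinate subspace $\langle e_s:s\in\sigma_{ij}^{-1}(S)\rangle_\RR$, up to a translation contributed by the rescaling. Since $E$ is, by the definition of a subbundle, the coordinate subbundle indexed by some $r$-element set $S_i$ in the trivialization $\Phi_i$, and since the $\Phi_j\circ\Phi_i^{-1}$ must carry the fibers of $E$ to the fibers of $E$, the permutation parts $\sigma_{ij}$ are forced to send $S_j$ bijectively onto $S_i$ on each overlap --- hence also $S_j^c$ onto $S_i^c$ --- while the rescaling parts are forced to be trivial on the $S_i^c$-coordinates. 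This rigidity is the crux: the monomial shape of $\G(n)$ gives the transition maps of $F$ no room to entangle $E$ with a complement.

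Carrying this out, I would first pass to a common refinement so that $F$ and $E$ are trivialized over a single cover $\{U_i\}$ with homeomorphisms $\Phi_i$, and read off from the subbundle condition the index sets $S_i\subseteq\{1,\dots,n\}$, $|S_i|=r$, with $\Phi_i\big((\pi|_E)^{-1}(U_i)\big)=U_i\times\langle e_s:s\in S_i\rangle_\RR$. Writing $M_{ij}(x)=E_{\sigma_{ij}}\odot D(a_{ij,1},\dots,a_{ij,n})$ with $\sigma_{ij}$ locally constant, I would deduce from the requirement that $\Phi_j\circ\Phi_i^{-1}$ carry the fibers of $E$ onto the fibers of $E$ that $\sigma_{ij}(S_j)=S_i$ and that $a_{ij,k}=0$ for every $k\in S_i^c$. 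Conjugating each fiber by the permutation matrix $E_{\rho_i}$ for a permutation $\rho_i$ taking $S_i$ to $\{1,\dots,r\}$ then puts every transition map of $F$ into block-diagonal form
$$E_{\rho_j}\odot M_{ij}\odot E_{\rho_i}^{-1}=\begin{pmatrix}A_{ij}&-\infty\\-\infty&B_{ij}\end{pmatrix}=A_{ij}\oplus B_{ij},$$
where $A_{ij}\in\G(r)$ is exactly the cocycle of transition maps of $E$ and $B_{ij}\in\G(n-r)$ is a cocycle defining a rank-$(n-r)$ bundle $E'$. I would then realize $E'$ as the subbundle of $F$ cut out by the complementary blocks and check that it is a genuine subbundle: each of the relevant local conditions (including the weight-matching clause of Definition~\ref{vbdef}) holds because $E'$ is patched from copies of $U_i\times\RR^{n-r}$ carrying the pulled-back weights of $X$, so balancing and the weight condition are inherited from $F$. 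The displayed block-diagonal form then says exactly that the $\G(n)$-cocycle of $F$ differs from the cocycle of $E\oplus E'$ by the $0$-cochain $(E_{\rho_i})$ of invertible permutation matrices, which amounts to an isomorphism $F\cong E\oplus E'$.

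The real work lies in the middle step: converting the topological statement ``$\Phi_j\circ\Phi_i^{-1}$ preserves $E$'' into the two combinatorial facts $\sigma_{ij}(S_j)=S_i$ and $a_{ij,k}=0$ for $k\in S_i^c$ (and checking the $\sigma_{ij}$ are locally constant, so that the $S_i^c$ are even well defined), together with pinning $E'$ down as an honest subcycle of $F$ rather than a mere set-theoretic complement --- the one place that genuinely uses that the whole of $\G(n)$ is the structure group, since its elements act on fibers by tropical translations and not only linearly. Granting the monomial normal form of Remark~\ref{help}, everything else is routine bookkeeping.
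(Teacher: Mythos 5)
Your construction is the same as the paper's: the paper's entire proof is the one-line instruction to take the subbundle $E'$ locally spanned by the complementary coordinates, and your write-up supplies the details that make this work --- the monomial normal form of $\G(n)$ forces the permutation parts $\sigma_{ij}$ to preserve the partition $S_i\cup S_i^c$, so after conjugating by the $E_{\rho_i}$ every transition map of $F$ is block-diagonal $A_{ij}\oplus B_{ij}$, and the $B_{ij}$ cut out the complement. One of the two ``combinatorial facts'' you identify as the crux is, however, false: preservation of the fibers of $E$ does \emph{not} force $a_{ij,k}=0$ for $k\in S_i^c$. Those entries are untouched by the condition that $\Phi_j\circ\Phi_i^{-1}$ carry $E$-fibers to $E$-fibers; they are precisely the transition data of $E'$ and can be arbitrary regular invertible functions. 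For instance, $F=\OO_{\TP^1}(0)\oplus\OO_{\TP^1}(1)$ has $E=\OO_{\TP^1}(0)$ as a coordinate subbundle, yet the complementary diagonal entry of the transition matrix is $x_1-x_0$, not $0$; were your claim true, every complement would have pure-permutation transition maps, i.e.\ the second assertion cannot be ``deduced'' and attempting to prove it would stall. Fortunately nothing downstream depends on it: the block-diagonal form needs only $\sigma_{ij}(S_j)=S_i$, and the $B_{ij}$ inherit the cocycle condition and regular-invertibility from $F$ whatever the $a_{ij,k}$ are. Delete that clause and your argument is correct and is exactly the paper's proof, fleshed out.
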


\begin{proof}
Assume such a subbundle $E$ of $F$ exists. Then just consider the natural subbundle $E'$ that locally has the complementary basis in each local trivialization.
\end{proof}

\begin{definition}
Let $\pi: F \to X$ be a vector bundle of rank $n$. We say $F$ is \textit{decomposable} if there exists a subbundle $\pi|_E: E \to X$ of $F$ of rank $1 \leq r < n$. Otherwise, $F$ is said to be \textit{indecomposable}.
\end{definition}

This motivates the following definition.

\begin{definition}[Quotient bundles]
Let $\pi: F \to X$ be a vector bundle and $F \cong S \oplus Q$. Then we define the \textit{quotient bundle $F/S$} to be $Q$.
\end{definition}

These definitions are straightforwardly consistent with one another, i.e. every short exact sequence of tropical vector bundles is split.

\begin{definition}[Direct Sum, Tensor, Dual]
Let $E$ and $F$ be two vector bundles of ranks $e$ and $f$ over $X$ with local trivializations $U_1,\dots,U_s$ and transition maps $\{M_{ij}^E\}$ and $\{M_{ij}^F\}$. Then we have the following operations:
\begin{itemize}
    \item the \textit{direct sum bundle} $E\oplus F \to X$ is the rank $e+f$ bundle having transition maps 
\[\begin{tikzcd}[row sep=0]
	{M_{ij}^{E \oplus F} : \hspace{-.4in}} & { U_i \cap U_j} & {\G(e+f)} \\
	& M_{ij}^{E \oplus F}(p)  & { M_{ij}^E(p) \oplus M_{ij}^F(p)}
	\arrow[from=1-2, to=1-3]
	\arrow[maps to, from=2-2, to=2-3]
\end{tikzcd}\]

    \item the \textit{tensor product bundle} $E \otimes F \to X$ is the rank $ef$ bundle having transition maps
\[\begin{tikzcd}[row sep=0]
	{M_{ij}^{E \otimes F} : \hspace{-.4in}} & { U_i \cap U_j} & {\G(ef)} \\
	& M_{ij}^{E \otimes F}(p)  & { M_{ij}^E(p) \otimes M_{ij}^F(p)}
	\arrow[from=1-2, to=1-3]
	\arrow[maps to, from=2-2, to=2-3]
\end{tikzcd}\]

    \item the \textit{dual bundle} $E^\vee \to X$ is the rank $e$ bundle having transition maps
    \[\begin{tikzcd}[row sep=0]
	{M_{ij}^{E^\vee} : \hspace{-.4in}} & { U_i \cap U_j} & {\G(e)} \\
	& M_{ij}^{\vee}(p)  & { \left(M_{ij}(p)^T\right)^{-1} }
	\arrow[from=1-2, to=1-3]
	\arrow[maps to, from=2-2, to=2-3]
\end{tikzcd}\]
where the inverse is taken under the natural group structure of $\G(e)$.
\end{itemize}
\end{definition}

As a sanity check, we show that if $A \in G(n)$ and $B \in G(m)$, then indeed $A \otimes B \in G(nm)$. This confirms that the above transition maps indeed land in the codomain $G(fe) \subseteq \Mat(fe \times fe, \TT)$. Without loss of generality, we may assume that the finite entries of $A$ run along the diagonals $a_{ii}$ for $1 \leq i \leq n$. We then have that
$$A \otimes B = \begin{pmatrix} a_{11} \odot B & a_{12} \odot B & \cdots & a_{1n} \odot B \\
a_{21} \odot B & a_{22} \odot B & \cdots & a_{2n} \odot B \\
\vdots & \vdots & \ddots & \vdots \\
a_{n1} \odot B & a_{n2} \odot B & \cdots & a_{nn} \odot B  \end{pmatrix} =
\begin{pmatrix} a_{11} \odot B & -\infty & \cdots & -\infty \\
-\infty & a_{22} \odot B & \cdots & -\infty \\
\vdots & \vdots & \ddots & \vdots \\
-\infty & -\infty & \cdots & a_{nn} \odot B  \end{pmatrix}$$
and as $B$ has exactly one finite entry in each row and column, we may then conclude that $A \otimes B \in G(nm)$.

\begin{definition}[Morphisms of vector bundles]
A morphism of vector bundles $\pi_F: F \to X$ of rank $f$ and $\pi_E:E \to X$ of rank $e$ is a morphism $\Psi: F \to E$ of rational polyhedral spaces such that
\begin{itemize}
	\item[(a)] the diagram
\[\begin{tikzcd}[column sep=small]
	F && E \\
	& X
	\arrow["\Psi", from=1-1, to=1-3]
	\arrow["{\pi_F}"', from=1-1, to=2-2]
	\arrow["{\pi_E}", from=1-3, to=2-2]
\end{tikzcd}\]
		commutes, i.e. $\pi_F = \pi_E \circ \Psi$ and
	\item[(b)] there exists local trivializations $U_1,\dots,U_s$ of $F$ and $E$ and associated maps $A_i: U_i \to G(e \times f)$ such that
$$\Phi_i^E \circ \Psi \circ (\Phi_i^F)^{-1}: U_i \times \RR^f \rightarrow U_i \times \RR^e$$
is given by $(x,a) \mapsto (x,f_{A_i(x)}(a))$, and the entries of $A_i$ are regular invertible functions on $U_i$ or constantly $-\infty$.
\end{itemize}
\end{definition}

Consider now a morphism of tropical vector bundles 
\[\begin{tikzcd}[column sep=small]
	\mathcal{E} && \mathcal{F} \\
	& X
	\arrow["\phi", from=1-1, to=1-3]
	\arrow["{\pi_\mathcal{F}}", from=1-3, to=2-2]
	\arrow["{\pi_\mathcal{E}}"', from=1-1, to=2-2]
\end{tikzcd}\]
where $\mathcal{E}$ is rank $e$ and $\mathcal{F}$ is rank $f$. Let $U \subseteq X$ be a local trivialization of both $\mathcal{E}$ and $\mathcal{F}$. We then have a corresponding map $A: U \to \G(f \times e)$, which induces, per Remark \ref{fix}, a map $f_A: U \to \text{Hom}_{\textsf{Set}}(\RR^e, \RR^f)$ such that
$$\Phi^\mathcal{F} \circ \phi \circ (\Phi^\mathcal{E})^{-1}: U \times \RR^e \rightarrow U \times \RR^f$$
where $(p,v) \mapsto (p,f_A(p)(v))$. We now wish to define a notion of how $\phi$ acts `pointwise'.

\begin{definition}
With notation as above, we define the \textit{induced stalk map at $p$} to be the tropical linear map $\phi_p: \TT^e \to \TT^f$ given by $\phi_p(v) = A(p)(v)$, i.e. having $A(p) \in \G(f \times e)$.
\end{definition}

Note that this definition of the induced stalk map is invariant of the choice of local trivialization, up to a change of basis. We prove this below.

\begin{lemma} \label{stalk-invariant}
Let $U,V \subseteq X$ be local trivializations of a tropical vector bundle morphism $\phi: \mathcal{E} \to \mathcal{F}$. We denote by
$$A_U: U \to \G(f \times e) \quad \text{ and } \quad A_V: V \to \G(f \times e)$$
the associated transition maps. Then for $p \in U \cap V$,
$$A_V(p) = M_{U,V}^\mathcal{F}(p) \odot A_U(p) \odot (M_{U,V}^\mathcal{E}(p))^{-1}$$
where we have the transition maps
$$M_{U,V}^\mathcal{F}: U\cap V \to \G(f) \quad \text{ and } \quad M_{U,V}^\mathcal{E}: U\cap V \to \G(e).$$
\end{lemma}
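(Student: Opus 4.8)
The plan is to chase the commutative diagrams defining $\phi$ and the two trivializations around the overlap $U\cap V$, turning a composition of the associated set-maps $f_{(-)}$ into a single tropical matrix product, and then to cancel the (invertible) transition matrices. First I would record the local data: by the definition of a morphism of vector bundles,
$$\Phi_U^{\mathcal F}\circ\phi\circ(\Phi_U^{\mathcal E})^{-1}\colon (x,v)\longmapsto (x,f_{A_U(x)}(v)),\qquad \Phi_V^{\mathcal F}\circ\phi\circ(\Phi_V^{\mathcal E})^{-1}\colon (x,v)\longmapsto (x,f_{A_V(x)}(v)),$$
while by the definition of a tropical vector bundle the changes of trivialization are $\Phi_V^{\mathcal E}\circ(\Phi_U^{\mathcal E})^{-1}\colon (x,v)\mapsto (x,f_{M^{\mathcal E}_{U,V}(x)}(v))$ and likewise for $\mathcal F$. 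On $(U\cap V)\times\RR^e$ I would then insert $\id=(\Phi_U^{\mathcal F})^{-1}\circ\Phi_U^{\mathcal F}$ and $\id=(\Phi_U^{\mathcal E})^{-1}\circ\Phi_U^{\mathcal E}$ and factor
$$\Phi_V^{\mathcal F}\circ\phi\circ(\Phi_V^{\mathcal E})^{-1} \;=\; \bigl(\Phi_V^{\mathcal F}\circ(\Phi_U^{\mathcal F})^{-1}\bigr)\circ\bigl(\Phi_U^{\mathcal F}\circ\phi\circ(\Phi_U^{\mathcal E})^{-1}\bigr)\circ\bigl(\Phi_U^{\mathcal E}\circ(\Phi_V^{\mathcal E})^{-1}\bigr),$$
so that the outer two factors are the transition maps of $\mathcal F$ and of $\mathcal E$ (one of them an inverse), to be read off as $M^{\mathcal F}_{U,V}$ and $M^{\mathcal E}_{U,V}$ up to inverse; here one must keep straight which trivialization is the source and which is the target, so that the inverses fall on the sides prescribed in the statement.

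Next, fixing $p\in U\cap V$, everything becomes a composition of set-maps attached to fixed tropical matrices. The crucial step is the identity $f_B\circ f_A=f_{B\odot A}$, valid whenever the matrix on the outside is \emph{invertible} (lies in $\G(f)$ or $\G(e)$): by Remark \ref{help} such a matrix is a permutation matrix $E_\sigma$ composed with a finite diagonal shift, so it merely relabels and translates coordinates and hence commutes with the operation of Remark \ref{fix} that replaces a $-\infty$ coordinate by $0$; therefore no information is lost when it is pre- or post-composed, and composition of maps corresponds exactly to the tropical product of the matrices. Applying this to the factorization above gives $f_{A_V(p)}=f_{(M^{\mathcal F}_{U,V}(p))^{-1}\,\odot\,A_U(p)\,\odot\,M^{\mathcal E}_{U,V}(p)}$ as maps $\RR^e\to\RR^f$. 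Since conjugating a matrix of $\G(f\times e)$ by an invertible matrix on the left (a row permutation-plus-shift) and on the right (a column permutation-plus-shift) again lands in $\G(f\times e)$, and such a matrix is determined by the map $f_{(-)}$ together with the positions of its $-\infty$ rows, which are forced by the conjugating matrices, I can finally read off $A_V(p)=(M^{\mathcal F}_{U,V}(p))^{-1}\odot A_U(p)\odot M^{\mathcal E}_{U,V}(p)$.

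I expect the only genuine subtlety to be the interplay with Remark \ref{fix}: because the maps $f_{(-)}$ are not literally tropical-linear, one cannot assert in general that composition of maps equals multiplication of matrices, and the argument really does rely on the fact that the matrices composed on either side are invertible, so that $f_{(-)}$ is a bijection of $\RR^{\bullet}$ compatible with the $-\infty\mapsto 0$ convention. A secondary, purely cosmetic point is the index bookkeeping for the transition maps ($M_{U,V}$ versus $M_{V,U}$), which only affects which of the two inverses lands on which side.
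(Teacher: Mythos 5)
Your overall strategy is the same as the paper's: the paper's proof is a one-line assertion that the square
$(U\cap V)\times\TT^e \to (U\cap V)\times\TT^f$ built from $A_U$, $A_V$ and the two transition maps commutes, and your factorization
$\Phi_V^{\mathcal F}\circ\phi\circ(\Phi_V^{\mathcal E})^{-1}=\bigl(\Phi_V^{\mathcal F}\circ(\Phi_U^{\mathcal F})^{-1}\bigr)\circ\bigl(\Phi_U^{\mathcal F}\circ\phi\circ(\Phi_U^{\mathcal E})^{-1}\bigr)\circ\bigl(\Phi_U^{\mathcal E}\circ(\Phi_V^{\mathcal E})^{-1}\bigr)$
is exactly that diagram chase, spelled out. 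You are also right to isolate the passage from composition of the set-maps $f_{(-)}$ to tropical matrix multiplication as the only real content; the paper skips this entirely.

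However, the justification you give for that step is not correct as stated. You claim that an invertible matrix $N=E_\sigma\odot D(b_1,\dots,b_f)$ ``commutes with the operation of Remark \ref{fix}'' because it only relabels and translates coordinates. The relabeling does commute with the $-\infty\mapsto 0$ replacement, but the translation does not: if row $\sigma(i)$ of $A$ is entirely $-\infty$, then $f_N\bigl(f_A(x)\bigr)_i=b_{i,\sigma(i)}+0=b_{i,\sigma(i)}$, whereas $f_{N\odot A}(x)_i=0$, and these differ whenever $b_{i,\sigma(i)}\neq 0$. So $f_N\circ f_A=f_{N\odot A}$ can fail for \emph{post}-composition by an invertible matrix precisely when $A\in\G(f\times e)$ has an all-$(-\infty)$ row — which is the case the lemma must cover, since $A_U(p)$ need not be invertible. (Pre-composition is genuinely safe: an invertible $M$ has exactly one finite entry per row, so $f_M$ never produces a $-\infty$ and $f_A\circ f_M=f_{A\odot M}$ holds by direct computation.) The clean repair, and what the paper's diagram implicitly does by writing $\TT^e$ and $\TT^f$ rather than $\RR^e$ and $\RR^f$, is to phrase the identity at the level of the stalk maps $\phi_p\colon\TT^e\to\TT^f$, $v\mapsto A(p)\odot v$: there composition of maps is tropical matrix multiplication by associativity of $\odot$, no convention is needed, and the matrix is recovered from the map (on $\TT^e$, unlike on $\RR^e$, an all-$(-\infty)$ row is distinguishable from a constant row). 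With that substitution your argument closes; as written, the sentence justifying the crucial step proves something false.
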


\begin{proof}
This statement is equivalent to the content that the following diagram commutes:
\[\begin{tikzcd}
	{(U \cap V) \times \mathbb{T}^e} && {(U \cap V) \times \mathbb{T}^e} \\
	\\
	{(U \cap V) \times \mathbb{T}^f} && {(U \cap V) \times \mathbb{T}^f}
	\arrow["{\text{id} \times A_U{\Large|}_{U\cap V}}"', from=1-1, to=3-1]
	\arrow["{\text{id} \times A_V{\Large|}_{U\cap V}}", from=1-3, to=3-3]
	\arrow["{\id \times M_{U \cap V}^\mathcal{E}}", from=1-1, to=1-3]
	\arrow["{\id \times M_{U \cap V}^\mathcal{F}}", from=3-1, to=3-3]
\end{tikzcd}\]
which follows from the commutativity property of tropical vector bundle morphisms and that the above transitions are isomorphisms.
\end{proof}

The above allows us to establish what is one of the most important tools for our subsequent work.

When the base space $X$ has sedentary strata (boundary), the entries of $A_i(p)$ that are regular invertible functions on the interior of a face can approach $-\infty$ as $p$ approaches a sedentary face of $X$. At such boundary points, the induced stalk map $\phi_p$ has strictly lower rank than at interior points. This is the mechanism by which degeneracy loci acquire positive codimension.

\begin{corollary} \label{rank-upper-semi}
Given a tropical vector bundle morphism $\phi: \mathcal{E} \to \mathcal{F}$ over a rational polyhedral space $X$, we have an induced rank function
\[\begin{tikzcd}[row sep=tiny]
	{\text{rank($\phi$)}: \hspace{-.4in}} & {|X|} & {\mathbb{Z}} \\
	& p & {\troprank(\phi_p)}
	\arrow[from=1-2, to=1-3]
	\arrow[maps to, from=2-2, to=2-3]
\end{tikzcd}\]
which is locally constant on the interior of each top-dimensional face, but may drop at sedentary strata. In particular, $\rank(\phi)$ is upper semicontinuous on $|X|$.
\end{corollary}

\begin{proof}
Well-definedness follows from Lemma \ref{stalk-invariant} and Lemma \ref{troprank-props}. On the interior of a top-dimensional face $\sigma$ of $X$, the entries of the local matrix $A_i(p)$ are continuous functions valued in $\RR \cup \{-\infty\}$ whose non-$(-\infty)$ entries are regular invertible (hence piecewise-linear on $\RR$). On connected open subsets of the interior, the set of non-$(-\infty)$ entries is constant, so $\rank(\phi)$ is locally constant there.

At a sedentary face $\tau \subseteq |X|$ (where some coordinates of the ambient $\TT^n$ equal $-\infty$), regular invertible functions on the interior that extend continuously to $\tau$ may take the value $-\infty$ at $\tau$, causing additional entries of $A_i(p)$ to become $-\infty$. Since additional entries becoming $-\infty$ can only decrease the tropical rank, $\rank(\phi)$ is upper semicontinuous.
\end{proof}

\begin{remark}
Upper semicontinuity of the rank function means that the sublevel sets $D_k(\phi) = \{p \in |X| \mid \troprank(\phi_p) \leq k\}$ are closed in $|X|$ for each $k$. On the interior, the locally constant behavior implies $D_k(\phi) \cap X^\circ$ is a union of connected components of $X^\circ$. The genuine rank drop occurs at sedentary strata, giving $D_k(\phi)$ additional faces of positive codimension.
\end{remark}

\begin{lemma} \label{rank-morph}
Consider a morphism of tropical vector bundles $\phi: \mathcal{E} \to \mathcal{F}$ over a rational polyhedral space $X$, with $\rank \ \mathcal{E} = e$ and $\rank \ \mathcal{F} = f$. Then the sublevel sets
$$D_k(\phi) = \{p \in |X| \mid \troprank(\phi_p) \leq k\}$$
for $0 \leq k \leq \min(e,f)$ are closed subsets of $|X|$ that inherit the structure of rational polyhedral subspaces of $X$.
\end{lemma}

\begin{proof}
That $D_k(\phi)$ is closed follows from the upper semicontinuity of $\rank(\phi)$ (Corollary \ref{rank-upper-semi}). To see that $D_k(\phi)$ inherits a rational polyhedral structure, pass to a refinement of $X$ on which $\rank(\phi)$ is constant on the interior of each top-dimensional face $\sigma$ (this is possible since the rank function is locally constant on the interior). Then $D_k(\phi)$ is a union of closures of those top-dimensional faces $\sigma$ on which $\rank(\phi)|_{\sigma^\circ} \leq k$, together with any sedentary faces where the rank drops further. It remains to verify balancing. Let $\tau$ be a codimension-one face (ridge) of $D_k(\phi)$. If $\tau$ lies in the interior of $|X|$, then upper semicontinuity forces $\rank(\phi)|_{\tau} \leq k$, and since rank is constant on each adjacent facet interior, every facet $\sigma > \tau$ in $X$ with $\rank(\phi)|_{\sigma^\circ} \leq k$ is in $D_k(\phi)$. But since $\rank(\phi)|_\tau \leq k$ and rank is locally constant on each $\sigma^\circ$, all facets adjacent to $\tau$ satisfy $\rank(\phi)|_{\sigma^\circ} \leq k$ (otherwise $\tau$ would not be a codimension-one face of $D_k(\phi)$, but rather a boundary face). Hence $\tau$ has the same adjacent facets and weights in $D_k(\phi)$ as in $X$, so the balancing sum is inherited. For sedentary ridges, the balancing condition is likewise inherited within the appropriate stratum.
\end{proof}

\section{Tropical Chern Classes}

\begin{definition}[Rational sections of vector bundles]
Let $\pi: F \to X$ be a vector bundle of rank $n$. A \textit{rational section} $s: X \to F$ of $F$ is a continuous map $s: |X| \to |F|$ such that
\begin{enumerate}
	\item[(a)] $\pi \circ s = \id_{|X|}$, i.e. $s$ is a section of $\pi$, and
	\item[(b)] there exists a cover of $X$ by local trivializations and with associated homeomorphisms $\mathcal{U}$ such that for each $(U, \Phi) \in \mathcal{U}$ the components of $\Phi \circ s$ are rational functions on $U$, i.e.
$$\Phi \circ s = (\id_U , s_1 , \dots , s_n): U \rightarrow U \times \RR^n$$
and $s_j \in \KK(U)$ for each $1 \leq j \leq n$.

\end{enumerate}
We say a rational section $s:X \to F$ is \textit{bounded} if the components are bounded over each local trivialization. 
\end{definition}

\begin{remark}
For the remainder of the paper, unless explicitly stated otherwise, all the tropical vector bundles we discuss will be assumed to admit bounded rational sections. 
\end{remark}

An important fact we use routinely is the following: if $\mathcal{L} \to X$ is a line bundle, then a rational section $s: X \to \mathcal{L}$ naturally corresponds to a Cartier divisor $\{(U_i, \widehat{s}_i)\}$, which we denote by $\mathcal{D}(s)$. We state the following result, directing the reader to the source for the proof.

\begin{lemma}[\cite{chern}]
Let $\pi: L \to X$ be a line bundle and let $s_1, s_2: X \to L$ be two bounded rational sections. Then 
$$[\mathcal{D}(s_1)] = [\mathcal{D}(s_2)] \in \Div(X).$$
\end{lemma}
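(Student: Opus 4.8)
The plan is to show that the difference of the two associated Cartier divisors is a principal divisor coming from a regular invertible function, hence trivial in $\Div(X)$. Since $\mathcal{D}(s_i) = \{(U_k, \widehat{(s_i)}_k)\}$ with respect to a common open cover $\{U_k\}$ of local trivializations (one can always pass to a common refinement of the two covers), on each $U_k$ we have two rational functions $\widehat{(s_1)}_k, \widehat{(s_2)}_k \in \KK^*(U_k)$ recording the single component of each section in that trivialization. The quotient $\widehat{(s_1)}_k \odot (\widehat{(s_2)}_k)^{-1} = \widehat{(s_1)}_k - \widehat{(s_2)}_k$ is a well-defined continuous function on $U_k$, and by the equivalence criterion in the definition of Cartier divisors it suffices to check that this quotient lies in $\OO^*(U_k)$, i.e. is regular invertible, and that these local quotients glue to a global regular invertible function on $|X|$.

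The key computation is local. On $U_k$, write $s_i$ in the trivialization $\Phi_k$ as $(\id, (s_i)_k)$ with $(s_i)_k \in \KK^*(U_k)$; on an overlap $U_k \cap U_\ell$ the two descriptions are related by the line bundle transition map $M_{k\ell}: U_k \cap U_\ell \to \G(1)$, whose single entry $m_{k\ell}$ is a regular invertible function (this is condition (c) of Definition \ref{vbdef}, together with the fact that for line bundles $\G(1) = \RR$ so no $-\infty$ entries occur). Thus $(s_i)_\ell = m_{k\ell} \odot (s_i)_k$ on the overlap, and therefore
$$(s_1)_\ell \odot ((s_2)_\ell)^{-1} = \big(m_{k\ell} \odot (s_1)_k\big) \odot \big(m_{k\ell} \odot (s_2)_k\big)^{-1} = (s_1)_k \odot ((s_2)_k)^{-1},$$
so the locally defined functions $g_k := (s_1)_k \odot ((s_2)_k)^{-1}$ agree on overlaps and patch to a single continuous function $g: |X| \to \RR$. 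It remains to see $g$ is regular invertible: in the trivialization $\Phi_k$, both $(s_1)_k$ and $(s_2)_k$ are rational functions whose difference, being independent of the trivialization, must be expressible through the regular (as opposed to merely rational) structure $\{M_\sigma\}$ — concretely, one checks $g \circ M_\sigma^{-1}$ is integer affine linear where defined, which holds because the transition-map-invariance forces the ``rational part'' of $(s_1)_k$ and $(s_2)_k$ to cancel, leaving only a contribution that is affine with respect to the regular atlas. Hence $g \in \OO^*(X)$.

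Finally, unwinding definitions: $\mathcal{D}(s_1) \odot \mathcal{D}(s_2)^{-1} = \{(U_k, g_k)\} = \{(|X|, g)\}$ with $g \in \OO^*(|X|)$, which by the equivalence relation on Cartier divisor representatives means $[\mathcal{D}(s_1)] = [\mathcal{D}(s_2)]$ in $\Div(X) = \KK^*(X)/\OO^*(X)$. I expect the main obstacle to be the verification that $g$ is genuinely regular invertible rather than merely rational and invertible: boundedness of the sections (which is in force by the standing assumption) is what rules out the difference being an unbounded rational function with nontrivial Weil divisor, and one must argue carefully from the definitions of $\KK^*$ versus $\OO^*$ — essentially that the Weil divisor $\div(g) = s_1 \cdot X - s_2 \cdot X$ associated to a line bundle section is independent of the section, so $\div(g) = 0$ and $g$ has empty divisor, forcing $g \in \OO^*(X)$. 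This is the step that genuinely uses the line bundle hypothesis and the boundedness assumption, and it is the computational heart that the cited source in \cite{chern} carries out.
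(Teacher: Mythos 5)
Your setup and the gluing computation are exactly right, and they reproduce the computational core of the argument in \cite{chern}: on a common refinement the local components satisfy $\widehat{(s_i)}_\ell = m_{k\ell} \odot \widehat{(s_i)}_k$ with $m_{k\ell}$ the single finite entry of the transition map, so the quotients $g_k = \widehat{(s_1)}_k - \widehat{(s_2)}_k$ agree on overlaps and define a global rational function $g$ with $\mathcal{D}(s_1) \odot \mathcal{D}(s_2)^{-1} = \{(|X|,g)\}$. The gap is in the final step: $g$ is \emph{not} in general regular invertible, and neither of the arguments you offer for this works. The difference of two piecewise integer affine functions is still only piecewise integer affine; for instance, for the trivial line bundle on a bounded segment, $s_1 = \max(0, x-\tfrac12)$ and $s_2 = 0$ are both bounded rational sections whose difference has a nontrivial Weil divisor, so $g \notin \OO^*(X)$ and $[\mathcal{D}(s_1)] \neq [\mathcal{D}(s_2)]$ as elements of $\KK^*(X)/\OO^*(X)$. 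The assertion that ``$\div(g) = s_1\cdot X - s_2\cdot X$ is independent of the section, so $\div(g)=0$'' is circular: independence of the section is precisely what the lemma claims, and it holds only up to rational equivalence, not on the nose. Likewise, boundedness does not rule out a nontrivial Weil divisor (the example above is bounded).

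What boundedness actually buys is different: since $s_1$ and $s_2$ are bounded, $g$ is a globally defined \emph{bounded} rational function, and the Cartier divisor of a global bounded rational function is rationally equivalent to zero --- this is the notion of rational equivalence from \cite{boundedratequiv} that the paper uses throughout (compare the later notation $[\mathcal{D}(\sigma)]_{\text{rat}}$). So the correct conclusion of your computation is that $\mathcal{D}(s_1)$ and $\mathcal{D}(s_2)$ differ by the principal divisor of a global bounded rational function and hence agree modulo rational equivalence, which is the sense in which the brackets in the statement must be read. With that reinterpretation your argument is essentially the proof in \cite{chern}; as written, the step asserting $g \in \OO^*(X)$ is false and the literal equality in $\KK^*(X)/\OO^*(X)$ does not hold.
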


\begin{remark}
Most importantly, the above lemma lets us now associate to any line bundle $\mathcal{L}$ that admits a bounded rational section $s: X \to \mathcal{L}$ a well-defined Cartier divisor class 
$$\mathcal{D}(\mathcal{L}) := [\mathcal{D}(s)] \in \Div(X).$$
\end{remark}
\begin{theorem}[\cite{chern}]
Let $\pi: F \to X$ be a vector bundle of rank $r$ on a simply connected rational polyhedral space $X$. Then $F$ is a direct sum of line bundles, i.e. there exist line bundles $\mathcal{L}_1,\dots,\mathcal{L}_r$ on $X$ such that
$$F \cong \bigoplus_{i=1}^r \mathcal{L}_i.$$
\end{theorem}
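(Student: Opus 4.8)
The plan is to prove this by induction on the rank $r$, using the existence of bounded rational sections together with the structure results already established (in particular the splitting-off of subbundles, Proposition on complements, and the quotient bundle construction). The base case $r=1$ is trivial, since $F$ is itself a line bundle. For the inductive step, I would take a vector bundle $\pi: F \to X$ of rank $r \geq 2$ over a simply connected tropical cycle $X$ and try to produce a rank-$1$ subbundle $\mathcal{L}_1 \hookrightarrow F$; once this is done, the complement Proposition gives $F \cong \mathcal{L}_1 \oplus E'$ with $E'$ of rank $r-1$, and applying the inductive hypothesis to $E'$ (which is again a vector bundle over the same simply connected $X$) finishes the argument.

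\smallskip

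The heart of the matter is therefore constructing a line subbundle. Here I would use the assumption that $F$ admits a bounded rational section $s: X \to F$. Locally on a trivialization $(U_i, \Phi_i)$ this section is given by $(\id_U, s_1^{(i)}, \dots, s_r^{(i)})$ with each $s_j^{(i)} \in \mathcal{K}(U_i)$ bounded. The idea is that such a section spans a rank-$1$ sub-object: over each $U_i$ one wants to take the subbundle whose fiber is $\langle s(p) \rangle$, but to land inside the definition of subbundle (Definition of subbundles) one needs, after a change of trivialization by an element of $\G(r)$, the image to be a coordinate line $\langle e_{j} \rangle_\RR$. This is where simple connectivity enters: it is what guarantees the transition maps $M_{ij}: U_i \cap U_j \to \G(r)$ can be globally rectified (this is exactly the mechanism already invoked implicitly in the torsor/covering-space description, $\G(n) \cong S_n \ltimes \RR^n$), so that the rank-$1$ piece cut out by the section patches together into an honest subbundle $\mathcal{L}_1$ rather than merely a local one.

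\smallskip

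I would then argue as follows in more detail: pick a bounded rational section $s$ of $F$; using the local description and the fact that on a simply connected base the $S_r$-part of the cocycle $\{M_{ij}\}$ can be trivialized (the monodromy of the associated $S_r \ltimes \mathrm{Aff}_X^r$-torsor vanishes), reduce to transition maps valued in the diagonal subgroup $D(\ast,\dots,\ast) \cong \mathrm{Aff}_X^r$. In this gauge the $r$ coordinate directions are each globally well-defined, so $F$ already splits as a sum of $r$ line bundles, with $\mathcal{L}_j$ having transition functions the $j$-th diagonal entry of $M_{ij}$. That last observation actually short-circuits the induction: once the $S_r$-monodromy is killed, the whole splitting drops out at once. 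So the cleanest writeup is: (1) recall $\G(r) = S_r \ltimes \RR^r$ and that a rank-$r$ bundle is an $S_r \ltimes \mathrm{Aff}_X^r$-torsor; (2) observe the composite to $S_r$ gives a principal $S_r$-bundle, i.e. a degree-$r$ covering $\widetilde{X} \to X$, which is trivial since $X$ is simply connected; (3) conclude the structure group reduces to $\mathrm{Aff}_X^r$, i.e. to diagonal matrices, whence $F \cong \bigoplus_{i=1}^r \mathcal{L}_i$ with $\mathcal{L}_i$ read off from the diagonal entries.

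\smallskip

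The main obstacle is step (2)--(3): making precise that vanishing of the $S_r$-monodromy really does reduce the structure group, in the explicit cocycle language of Definition \ref{vbdef} rather than the abstract torsor language. Concretely one must choose, on a good cover, permutations $\tau_i \in S_r$ with $\sigma_{ij} = \tau_i \tau_j^{-1}$ (where $\sigma_{ij}$ is the $S_r$-part of $M_{ij}$), which is possible exactly because the $S_r$-valued cocycle is a coboundary on a simply connected space, and then replace each trivialization $\Phi_i$ by $E_{\tau_i}^{-1} \odot \Phi_i$; one checks the new transition maps $E_{\tau_i}^{-1} \odot M_{ij} \odot E_{\tau_j}$ lie in $D(\ast,\dots,\ast)$. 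Verifying that these rectified maps still consist of regular invertible functions (or constantly $-\infty$), as required by condition (c), is the routine-but-necessary bookkeeping. Everything else — the complement proposition, the quotient bundle definition, and reading line bundles off diagonal cocycles — is already in place in the excerpt.
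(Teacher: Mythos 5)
The paper does not actually prove this statement---it is quoted from \cite{chern} without proof---so the comparison is against Allermann's original argument, and your final argument is essentially that one: decompose the $\G(r)$-valued cocycle via $\G(r)\cong S_r\ltimes\RR^r$, observe that the $S_r$-part $\sigma_{ij}$ is an honest (constant) \v{C}ech cocycle because condition (c) forces the finite/$-\infty$ pattern of $M_{ij}$ to be constant on $U_i\cap U_j$, kill it using simple connectivity (trivial degree-$r$ cover, so $\sigma_{ij}=\tau_i\tau_j^{-1}$), regauge by $E_{\tau_i}$, and read the $r$ line bundles off the now-diagonal transition functions. Steps (1)--(3) of your writeup are correct and complete modulo the routine check you already identify.

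Two remarks. First, your opening inductive plan---producing $\mathcal{L}_1$ as ``the line subbundle spanned by a bounded rational section $s$''---would not work as stated: a section whose local coordinates are all finite does not lie in a coordinate line, and the definition of subbundle in this paper requires the fiber to be a coordinate subspace $\langle e_j\rangle_\RR$ after trivialization; there is no tropical analogue of ``rescale a nonvanishing section to a frame vector.'' You correctly abandon this route, and the monodromy argument does not need the section at all (nor does it need the bounded-sections hypothesis), so the final proof is clean; just be aware the first paragraph is a dead end rather than an alternative. Second, the covering-space step implicitly uses that $X$ is connected (or is run component by component), which is worth one sentence in a full writeup.
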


\begin{definition}[Global intersection cycles]
Let $\pi: F \to X$ be a vector bundle, $s: X \to F$ be a rational section, and $Y \in Z_\ell(X)$ a subcycle. Then for a positive integer $k \leq \rank \ F$, we define the \textit{global intersection cycle} $s^{(k)} \cdot Y \in Z_{\ell - k}(X)$ on a local trivialization $U$ as
$$(s^{(k)} \cdot Y) \cap U := \sum_{I \in \binom{[r]}{k}} s_{i_1} \cdot s_{i_2} \cdots s_{i_k} \cdot (Y \cap U),$$
where we are summing over all $k$-element subsets $I = \{i_1,\dots,i_k\} \subseteq [r]$.
\end{definition}

\begin{remark}
Observe that if we let $U_1,\dots,U_s$ denote an open covering of $X$ by local trivializations of the bundle $\pi:F \to X$, and we  let $s_{ij}: U_i \to \RR$ denote the $j$th component of $s$ over $U_i$, then
$$(s^{(k)} \cdot Y) \cap U_i := \sum_{1 \leq j_1 < \cdots < j_k \leq n} s_{ij_1} \cdot s_{ij_2} \cdots s_{ij_k} \cdot (Y \cap U_i).$$
Since $s_{i'j} = s_{i\sigma(j) + \phi_j}$ on $U_i \cap U_{i'}$ for some $\sigma \in S_n$ and some regular invertible map $\mathcal{O}^*(U_i \cap U_{i'})$, the intersection products $(s^{(k)} \cdot Y) \cap U_i$ and $(s^{(k)} \cdot Y) \cap U_{i'}$ coincide on $U_i \cap U_{i'}$ and we can glue them together, so this object is well-defined.
\end{remark}

Define the Chow ring of a rational polyhedral space to be $A(X) := \bigoplus_{i\geq 0} A_i(X)$; see \cite{polycomplexes} for a detailed exposition.

\begin{lemma}[\cite{chern}] \label{well-def}
With the same notation as above, let $Y \in Z_\ell(X)$ be a cycle and $\phi \in \KK^*(Y)$ a bounded rational function on $Y$. Then the following holds:
$$s^{(k)} \cdot (\phi \cdot Y) = \phi \cdot (s^{(k)} \cdot Y).$$
\end{lemma}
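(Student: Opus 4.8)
The plan is to reduce the statement to a purely local computation on a single trivialization, then invoke commutativity and associativity of the affine intersection product with Weil divisors. First I would fix an open covering $U_1,\dots,U_s$ of $X$ by local trivializations of $\pi: F \to X$ on which the rational section $s$ has components $s_{ij} \in \KK^*(U_i)$, and recall that by construction $(s^{(k)} \cdot Y) \cap U_i = \sum_{1 \leq j_1 < \cdots < j_k \leq n} s_{ij_1} \cdots s_{ij_k} \cdot (Y \cap U_i)$. Since everything in sight (the global intersection cycle, multiplication by $\phi$) is defined by gluing local pieces, and the gluing has already been checked to be well-defined, it suffices to prove the identity after intersecting with each $U_i$; that is, it suffices to show $s_{ij_1} \cdots s_{ij_k} \cdot \big(\phi|_{U_i} \cdot (Y \cap U_i)\big) = \phi|_{U_i} \cdot \big(s_{ij_1} \cdots s_{ij_k} \cdot (Y \cap U_i)\big)$ for each multi-index, and then sum over multi-indices using bilinearity of the intersection product in the cycle argument.

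Next I would peel off the Chern-class section factors one at a time. The core fact needed is the commutativity of the affine intersection product: for bounded rational functions $\psi_1, \psi_2 \in \KK^*(Z)$ on an affine cycle $Z$, one has $\psi_1 \cdot (\psi_2 \cdot Z) = \psi_2 \cdot (\psi_1 \cdot Z)$, which is part of the standard package of \cite{polycomplexes} (commutativity of intersecting with Cartier divisors). Applying this repeatedly, I can move $\phi|_{U_i}$ past each of $s_{ij_1}, \dots, s_{ij_k}$ in turn: $s_{ij_1} \cdots s_{ij_k} \cdot (\phi|_{U_i} \cdot (Y \cap U_i)) = s_{ij_1} \cdots s_{ij_{k-1}} \cdot (\phi|_{U_i} \cdot (s_{ij_k} \cdot (Y \cap U_i))) = \cdots = \phi|_{U_i} \cdot (s_{ij_1} \cdots s_{ij_k} \cdot (Y \cap U_i))$. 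Here I must be slightly careful that the definition $\phi \cdot D := \phi|_{|D|} \cdot D$ for a subcycle $D$ means each successive restriction of $\phi$ is the appropriate one, but this is exactly the bookkeeping already baked into the definition of the Weil-divisor/intersection operations, so no new content is needed.

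The one genuine point to verify is that commutativity of the intersection product is being applied in a regime where it is valid, namely that $\phi|_{U_i}$ (and its further restrictions to the supports of the intermediate cycles) remains a bounded rational function, so that the product is defined and the cited commutativity theorem applies. Since $\phi \in \KK^*(Y)$ is assumed bounded and restriction to a subcycle of a local trivialization preserves both rationality and boundedness, this is automatic. The main obstacle — really the only thing beyond routine citation — is making sure the "glue the local answers" argument is legitimate: one needs that both sides of the claimed equality are cycles obtained by the same gluing procedure from their restrictions to the $U_i$, so that agreeing on each $U_i$ forces global agreement. This follows from the remark preceding the lemma (the overlaps $U_i \cap U_{i'}$ issue, handled by the relation $s_{i'j} = s_{i\sigma(j)} + \phi_j$ with $\phi_j \in \OO^*(U_i \cap U_{i'})$, which changes neither the divisor class nor the intersection cycle), together with the fact that intersecting with $\phi$ commutes with restriction to opens. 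I would state this gluing compatibility explicitly as the first reduction and then present the per-trivialization computation as the substance of the proof.
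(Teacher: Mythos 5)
Your argument is correct: the paper does not prove this lemma itself (it is imported from \cite{chern}), and the proof in that source is precisely your localize-then-commute argument, resting on the local definition of $s^{(k)}\cdot Y$, bilinearity, and the commutativity of the affine intersection product from \cite{polycomplexes}. One cosmetic remark: commutativity of intersecting with Cartier divisors holds for arbitrary rational functions, so the boundedness of $\phi$ plays no role in this particular step (it matters only for the rational-equivalence statements that follow).
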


\begin{theorem}[\cite{chern}] \label{indep}
Let $\pi: F \to X$ be a vector bundle of rank $r$ and $s_1,s_2: X \to F$ two bounded rational sections. Then $s_1^{(k)} \cdot Y$ and $s_2^{(k)} \cdot Y$ are rationally equivalent, i.e.
$$[s_1^{(k)} \cdot Y] = [s_2^{(k)} \cdot Y] \in A(X)$$
holds for all subcycles $Y \in Z_\ell(X)$.
\end{theorem}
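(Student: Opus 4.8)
The plan is to reduce the statement to the rank‑one case, where it is exactly the cited lemma on Cartier divisor classes, by combining the splitting theorem for bundles on simply connected tropical cycles with the locality of the global intersection cycle.

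First I would use that $s^{(k)}\cdot Y$ is assembled by gluing the local pieces $(s^{(k)}\cdot Y)\cap U$ and that, by the well‑definedness remark following the definition of the global intersection cycle, this construction does not depend on the chosen cover by local trivializations. It therefore suffices to find, around each point of $|X|$, an open set $U$ — which I may take small enough that $F|_U$ is trivial, that $U$ is simply connected, and that the line bundles occurring in the splitting of $F|_U$ are themselves trivial over $U$ — on which $(s_1^{(k)}\cdot Y)\cap U$ and $(s_2^{(k)}\cdot Y)\cap U$ coincide.

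On such a $U$, the splitting theorem gives $F|_U\cong\bigoplus_{j=1}^r\LL_j$, and any section $s$ of $F$ induces sections $s^{(j)}\colon U\to\LL_j$ which, in the trivialization of $F|_U$ adapted to this decomposition, are precisely the components of $s$. Using the trivialization‑independence of $(s^{(k)}\cdot Y)\cap U$, I would rewrite
\[(s_i^{(k)}\cdot Y)\cap U \;=\; \sum_{J\in\binom{[r]}{k}} s_i^{(j_1)}\cdot s_i^{(j_2)}\cdots s_i^{(j_k)}\cdot(Y\cap U),\qquad i=1,2,\]
each $s_i^{(j)}$ being a bounded rational section of $\LL_j$ since it is a component of the bounded section $s_i$. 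Now for each fixed $j$ the sections $s_1^{(j)}$ and $s_2^{(j)}$ are two bounded rational sections of the single line bundle $\LL_j$, so the cited rank‑one lemma gives $[\mathcal{D}(s_1^{(j)})]=[\mathcal{D}(s_2^{(j)})]\in\Div(U)$; the two differ by a regular invertible function, which (being integer affine linear in the tropical charts) contributes nothing to the Weil divisor on any subcycle. Hence each $s_1^{(j)}$ may be replaced by $s_2^{(j)}$ without changing any of the iterated affine intersection products above, so $(s_1^{(k)}\cdot Y)\cap U=(s_2^{(k)}\cdot Y)\cap U$; gluing over a cover of $|X|$ yields $s_1^{(k)}\cdot Y=s_2^{(k)}\cdot Y$, which is even stronger than the asserted rational equivalence.

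The hard part is the bookkeeping that legitimises the reduction to line bundles: one must verify that the components of a bounded section in the adapted trivialization are again bounded (so the rank‑one lemma applies), and that the rank‑one conclusion — an identity in $\Div(U)=\KK^*(U)/\OO^*(U)$ — survives being fed through the iterated intersection product against $Y\cap U$, which comes down to the vanishing of the Weil divisor of a regular invertible function. An alternative route avoiding the splitting theorem is to expand $s_{1,i_1}\cdots s_{1,i_k}\cdot Y$ as $\prod_{i\in I}(s_{2,i}+c_i)\cdot Y$ with $c_i:=s_{1,i}-s_{2,i}$ a bounded rational function, use additivity of the affine intersection product in both slots together with its commutativity and Lemma \ref{well-def} to move each $c_i$ outermost, and recognise $(s_1^{(k)}-s_2^{(k)})\cdot Y$ as a sum of divisors of bounded rational functions; this gives rational equivalence directly but requires more care to glue the local expansions into a global cycle.
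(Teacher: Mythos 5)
The paper itself offers no proof of this theorem; it is quoted verbatim from \cite{chern}, so the only benchmark is Allermann's argument, which is a global telescoping computation rather than a localization. Your main argument has a genuine gap, and it is concentrated in the sentence ``the two differ by a regular invertible function, which \dots contributes nothing to the Weil divisor.'' Two bounded rational sections of a line bundle over $U$ differ by a \emph{bounded rational} function on $U$, not by a regular invertible (integer affine linear) one, and the Weil divisor of a bounded rational function is in general nonzero. Concretely, for the trivial line bundle on a one-dimensional tropical cycle homeomorphic to a circle, $s_1=0$ and $s_2$ any nonlinear bounded piecewise linear function give $s_1^{(1)}\cdot X=0$ but $s_2^{(1)}\cdot X\neq 0$. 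So your conclusion $(s_1^{(k)}\cdot Y)\cap U=(s_2^{(k)}\cdot Y)\cap U$, and hence the claimed equality of cycles ``even stronger than the asserted rational equivalence,'' is false. The paper's statement of the rank-one lemma, read literally as an equality in $\Div(X)=\KK^*/\OO^*$, would license your deduction, but that reading is inconsistent both with the example above and with the fact that the theorem you are proving asserts only rational equivalence; the actual content of the lemma is that $\mathcal{D}(s_1)-\mathcal{D}(s_2)=\mathcal{D}(h)$ for a globally defined bounded rational function $h$, i.e., equality only after passing to $A(X)$.

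Once the conclusion is weakened to rational equivalence, the localization strategy collapses for a structural reason: rational equivalence is not a local condition, so producing on each $U$ of a cover a bounded rational function $h_U$ with $\bigl(s_1^{(k)}\cdot Y - s_2^{(k)}\cdot Y\bigr)\cap U=h_U\cdot (Y\cap U)$ does not yield the globally defined bounded functions on subcycles of $X$ that the definition of $A(X)$ requires. (The appeal to the splitting theorem on small $U$ is also not licensed, since that theorem is stated for simply connected tropical cycles, not open subsets; but this is moot once $F|_U$ is taken trivial.) Your ``alternative route'' at the end --- telescoping $\prod_{i\in I}(s_{2,i}+c_i)$, moving the bounded differences $c_i$ outward via Lemma \ref{well-def} and commutativity, and assembling the local differences into globally defined symmetric combinations --- is the correct approach and is essentially what \cite{chern} does; but, as you yourself note, gluing the local $c_i$ (which are only defined up to the permutation-and-translation action of the transition maps) into global data is exactly the nontrivial step, and it is not carried out here.
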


\begin{definition}[Chern classes] 
Let $\pi: F \to X$ be a vector bundle admitting \textit{bounded} rational sections. We define the $k$th \textit{Chern class} of $F$ to be the endomorphism
\[\begin{tikzcd}[column sep=scriptsize,row sep=tiny]
	{c_k(F): \hspace{-0.35in}} & {A(X)} && {A(X)} \\
	& {[Y]} && {[s^{(k)} \cdot Y]}
	\arrow[from=1-2, to=1-4]
	\arrow[maps to, from=2-2, to=2-4]
\end{tikzcd}\]
where $s: X \to F$ is any bounded rational section. It follows that $c_0(F) = 1_{A(X)}$ and $c_k(F) = 0$ for $k \not\in [\rank \ F]$. We often will write $c_k(F)\cdot Y$ to denote $c_k(F)(Y)$ to emphasize the reliance on intersection product. The \textit{full Chern class} of a vector bundle $F$ is defined to be $$c(F) := \sum_{p \geq 0} c_p(F) \in A(X).$$
\end{definition}

The class $c_k(F)$ above is well-defined by Lemma \ref{well-def} and independent of the choice of rational section by Theorem \ref{indep}. Furthermore, isomorphic vector bundles have the same Chern classes. We state a tropical analogue of Whitney's formula.

\begin{theorem}[Tropical Whitney's Formula] \label{whitney}
If we have a short exact sequence of finite rank tropical vector bundles over $X$,
$$0 \rightarrow \mathcal{E} \rightarrow \mathcal{F} \rightarrow \mathcal{G} \rightarrow 0,$$
then $c(\mathcal{F}) = c(\mathcal{E})c(\mathcal{G}).$
\end{theorem}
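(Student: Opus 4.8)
The plan is to reduce the statement to the case of a direct sum. First I would observe that since all short exact sequences of tropical vector bundles split (by the Proposition following the definition of subbundles, together with the definition of quotient bundles), the sequence $0 \to \mathcal{E} \to \mathcal{F} \to \mathcal{G} \to 0$ gives an isomorphism $\mathcal{F} \cong \mathcal{E} \oplus \mathcal{G}$. Since isomorphic bundles have the same Chern classes, it suffices to prove the product formula $c(\mathcal{E} \oplus \mathcal{G}) = c(\mathcal{E}) c(\mathcal{G})$.

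To prove the product formula, the key idea is to compute the Chern classes of $\mathcal{E} \oplus \mathcal{G}$ using a rational section built from rational sections of the summands. If $s: X \to \mathcal{E}$ has local components $(s_1, \dots, s_e)$ and $t: X \to \mathcal{G}$ has local components $(t_1, \dots, t_g)$, then $s \oplus t: X \to \mathcal{E} \oplus \mathcal{G}$ has local components $(s_1, \dots, s_e, t_1, \dots, t_g)$; this is a bounded rational section of $\mathcal{E} \oplus \mathcal{G}$ by construction of the direct sum bundle (its transition maps are $M^{\mathcal{E}}_{ij} \oplus M^{\mathcal{G}}_{ij}$, which act block-diagonally). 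Then by the definition of the global intersection cycle, for any subcycle $Y$,
$$(s \oplus t)^{(k)} \cdot Y = \sum_{a + b = k} \left( \sum_{\substack{I \in \binom{[e]}{a}}} \ \sum_{\substack{J \in \binom{[g]}{b}}} s_{i_1} \cdots s_{i_a} \cdot t_{j_1} \cdots t_{j_b} \cdot Y \right) = \sum_{a+b=k} s^{(a)} \cdot (t^{(b)} \cdot Y),$$
using Lemma \ref{well-def} (commutativity of intersecting with rational functions and with the global intersection cycle) to reorganize the products. Passing to Chow classes, this reads $c_k(\mathcal{E} \oplus \mathcal{G}) = \sum_{a+b=k} c_a(\mathcal{E}) \circ c_b(\mathcal{G})$ as endomorphisms of $A(X)$, which is exactly $c(\mathcal{E} \oplus \mathcal{G}) = c(\mathcal{E}) c(\mathcal{G})$ in the (commutative) Chow ring. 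Independence of the chosen sections is guaranteed by Theorem \ref{indep}.

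The main obstacle I anticipate is bookkeeping around well-definedness and gluing: one must check that $s \oplus t$ really is a legitimate bounded rational section of $\mathcal{E} \oplus \mathcal{G}$ (i.e. that the block-diagonal transition maps send the concatenated components to concatenated components of the right form on overlaps), and that the rearrangement of the iterated intersection products in the displayed identity is valid — this is where Lemma \ref{well-def} and the remark on gluing global intersection cycles do the real work, ensuring the local computations on trivializations $U_i$ agree on overlaps $U_i \cap U_{i'}$ up to the permutation-and-translation ambiguity in the section components. A secondary subtlety is that the Chern classes are defined as \emph{endomorphisms} of $A(X)$, so the equation $c(\mathcal{F}) = c(\mathcal{E}) c(\mathcal{G})$ must be interpreted as an identity of operators (composition on the left, the ring product on the right); since intersection with rational functions is commutative and associative on Chow classes, these agree, but it is worth stating explicitly.
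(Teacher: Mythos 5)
Your proof is correct and follows the same route as the paper: both reduce to the split case $\mathcal{F} \cong \mathcal{E} \oplus \mathcal{G}$ (using that every short exact sequence of tropical vector bundles splits) and then apply the direct-sum formula $c_k(\mathcal{E}\oplus\mathcal{G}) = \sum_{a+b=k} c_a(\mathcal{E})\,c_b(\mathcal{G})$. The only difference is that the paper simply cites this formula from Allermann (Theorem 2.11(e) of \cite{chern}), whereas you prove it by expanding the global intersection cycle of the concatenated section $s \oplus t$ and splitting each $k$-subset of $[e+g]$ into its $\mathcal{E}$- and $\mathcal{G}$-parts --- which is essentially the proof of the cited result, so your argument is self-contained where the paper's is not.
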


\begin{proof}
We have that every short exact sequence of finite rank tropical vector bundles splits, so $\mathcal{F} \cong \mathcal{E} \oplus \mathcal{G}$. And so we have by  \cite{chern} Theorem 2.11(e),
\begin{align*}
c(\mathcal{E})c(\mathcal{G}) &= \left( \sum_{p\geq 0} c_p(\mathcal{E}) \right) \left( \sum_{p \geq 0} c_p(\mathcal{G}) \right) \\
&= \sum_{p \geq 0} \left( \sum_{i+j = p} c_i(\mathcal{E}) c_j(\mathcal{G}) \right) = \sum_{p \geq 0} c_p(\mathcal{F})
\end{align*}
as we had desired.
\end{proof}

An important consequence of Whitney's formula is that it expresses higher Chern classes of a splitting vector bundle in terms of first Chern classes.

\begin{corollary} \label{whitney-cor}
If a finite rank tropical vector bundle splits as a direct sum of tropical line bundles, i.e. $\mathcal{E} = \bigoplus_{j=1}^e \mathcal{L}_j$, then
$$c(\mathcal{E}) = \prod_{j=1}^e (1+c_1(\mathcal{L}_j))$$
\end{corollary}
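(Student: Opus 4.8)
The plan is to proceed by induction on $e$, the rank of the bundle $\mathcal{E}$, using Theorem \ref{whitney} (tropical Whitney's formula) as the engine. The base case $e=1$ is immediate: here $\mathcal{E} = \mathcal{L}_1$ and the claim $c(\mathcal{L}_1) = 1 + c_1(\mathcal{L}_1)$ is just the definition of the full Chern class together with the fact that $c_k(\mathcal{L}_1) = 0$ for $k \geq 2$ since $k \notin [\rank \, \mathcal{L}_1] = \{1\}$, as recorded in the definition of Chern classes.

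For the inductive step, suppose the formula holds for all bundles that split as a direct sum of at most $e-1$ line bundles. Given $\mathcal{E} = \bigoplus_{j=1}^e \mathcal{L}_j$, write $\mathcal{E} = \mathcal{E}' \oplus \mathcal{L}_e$ where $\mathcal{E}' = \bigoplus_{j=1}^{e-1} \mathcal{L}_j$. First I would observe that the inclusion and projection give a short exact sequence $0 \to \mathcal{E}' \to \mathcal{E} \to \mathcal{L}_e \to 0$ of finite rank tropical vector bundles (indeed any direct sum decomposition yields such a sequence, and by the discussion following the definition of quotient bundles every short exact sequence of tropical vector bundles is split, so this is consistent). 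Applying Theorem \ref{whitney} gives $c(\mathcal{E}) = c(\mathcal{E}') \, c(\mathcal{L}_e)$. Then I would substitute the inductive hypothesis $c(\mathcal{E}') = \prod_{j=1}^{e-1}(1 + c_1(\mathcal{L}_j))$ and the base case $c(\mathcal{L}_e) = 1 + c_1(\mathcal{L}_e)$ to conclude $c(\mathcal{E}) = \prod_{j=1}^{e}(1 + c_1(\mathcal{L}_j))$, completing the induction.

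Alternatively — and perhaps more cleanly — one can avoid induction entirely and cite the computation already carried out inside the proof of Theorem \ref{whitney}: iterating the splitting, $\mathcal{E} \cong \mathcal{L}_1 \oplus \cdots \oplus \mathcal{L}_e$, and repeatedly invoking multiplicativity of the full Chern class over direct sums (which is exactly what Theorem \ref{whitney} asserts, being the statement $c(\mathcal{A} \oplus \mathcal{B}) = c(\mathcal{A})c(\mathcal{B})$ after identifying the quotient with the complementary summand) yields $c(\mathcal{E}) = \prod_{j=1}^e c(\mathcal{L}_j)$, and then $c(\mathcal{L}_j) = 1 + c_1(\mathcal{L}_j)$ for each line bundle finishes it.

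I do not anticipate a genuine obstacle here: this is a formal consequence of Whitney's formula plus the vanishing $c_k(\mathcal{L}) = 0$ for $k \geq 2$. The only point requiring a word of care is the implicit claim that the product $\prod_{j=1}^e (1 + c_1(\mathcal{L}_j))$ is being taken in the tropical Chow ring $A(X)$ and that multiplication there is commutative and associative — so that the iterated application of Theorem \ref{whitney} is unambiguous regardless of how one parenthesizes the direct sum. This is part of the standing setup for $A(X)$ (the Chow ring as defined following Lemma \ref{well-def}), so it may be worth a parenthetical remark but does not affect the argument. The step I would be most careful to state precisely is simply the identification of a direct summand decomposition with a (split) short exact sequence, so that Theorem \ref{whitney} applies verbatim.
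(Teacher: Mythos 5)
Your proposal is correct and takes essentially the same route as the paper: the paper's proof is a one-line appeal to Whitney's formula together with the fact that for a line bundle $c(\mathcal{L}_j) = 1 + c_1(\mathcal{L}_j)$, which is exactly the engine of your induction. Your write-up merely makes the iteration explicit.
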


\begin{proof}
By definition, $c_0(\mathcal{L}_j) = 1$ for all $j$, and so we apply Whitney's formula.
\end{proof}

This decomposition of higher Chern classes into first Chern classes is a highly desirable property. Even when a vector bundle does not split, one can construct a space where its pullback does, perform computations with the resulting ``virtual" Chern roots, and then push the result back down. This is the core principle behind the splitting construction, which we prove below in the tropical setting.

\begin{theorem}[Properties of Chern classes] \label{chernprops} \label{c1-simple}
Let $\FF \to X$ and $\EE \to X$ be tropical vector bundles admitting bounded rational sections. Furthermore, let $f: Y \to X$ be a morphism of rational polyhedral spaces. Then the following statements hold true:
\begin{enumerate}
	\item classes commute under intersection product, i.e. 
$$c_i(\FF) \cdot c_j(\EE) = c_j(\EE) \cdot c_i(\FF),$$
	\item for all $Z \in A(Y)$ we have that
$$f_*(c_i(f^*\EE)\cdot Z) = c_i(\EE) \cdot f_*(Z),  $$
	\item functoriality, i.e. if $X$ and $Y$ are smooth varieties, then for all $Z \in A(X)$
$$c_i(f^*\FF) \cdot f^*(Y) = f^*(c_i(\FF) \cdot Y),$$
	\item tropical Whitney's formula, i.e. 
$$c_k(\FF \oplus \EE) = \sum_{i+j=k} c_i(\FF) \cdot c_j(\EE),$$
	\item if $\LL$ is a line bundle over $X$, then
$$c_1(\LL) = [\mathcal{D}(\LL)],$$
where $\mathcal{D}(\LL)$ is the Cartier divisor class associated to the line bundle $\LL$,
\item and lastly, for line bundles $\LL$ and $\LL'$ over a rational polyhedral space $X$, we have the following equalities
$$c_1(\mathcal{L}^\vee) = -c_1(\mathcal{L}) \quad \text{ and } \quad c_1(\mathcal{L} \otimes \mathcal{L}') = c_1(\mathcal{L}) + c_1(\mathcal{L}').$$
\end{enumerate}
\end{theorem}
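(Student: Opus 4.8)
The plan is to prove the six properties largely by reducing to the case of line bundles and exploiting the already-established tools: the splitting of short exact sequences, Whitney's formula (Theorem~\ref{whitney}), the correspondence between rational sections of line bundles and Cartier divisors, and the invariance results for intersection cycles (Lemma~\ref{well-def}, Theorem~\ref{indep}). I would treat the items in an order that lets later parts use earlier ones: first (5), then (6), then (1), then (4) (which is essentially Theorem~\ref{whitney} re-stated degreewise), and finally the pushforward/pullback statements (2) and (3).

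For (5), the statement $c_1(\LL)=[\mathcal D(\LL)]$ is almost a tautology from the definitions: a bounded rational section $s\colon X\to\LL$ has a single component $s_1\in\KK(U_i)$ on each local trivialization, and by construction $c_1(\LL)\cdot[Y]=[s^{(1)}\cdot Y]=[s_1\cdot Y]=[\div(s_1)\cdot Y]$, which is exactly the action of the divisor class $\mathcal D(s)=\{(U_i,\widehat s_i)\}$ on $[Y]$; the well-definedness of $\mathcal D(\LL)$ was already recorded in the remark following the line-bundle lemma of \cite{chern}. For (6), I would compute transition maps: if $\LL$ has transition functions $m_{ij}\in\OO^*(U_i\cap U_j)$, then $\LL^\vee$ has transition functions $(m_{ij}^{T})^{-1}=-m_{ij}$ (a $1\times1$ matrix), so a rational section $s$ of $\LL$ gives a rational section $-s$ (componentwise) of $\LL^\vee$, whence $\mathcal D(\LL^\vee)=-\mathcal D(\LL)$ and (5) gives $c_1(\LL^\vee)=-c_1(\LL)$; similarly $\LL\otimes\LL'$ has transition functions $m_{ij}\odot m'_{ij}=m_{ij}+m'_{ij}$, and the product of sections $s\odot s'$ has divisor $\mathcal D(s)+\mathcal D(s')$, giving $c_1(\LL\otimes\LL')=c_1(\LL)+c_1(\LL')$. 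Part (1) follows because on a local trivialization $c_i(\FF)\cdot c_j(\EE)\cdot[Y]$ is computed by successively intersecting with rational functions (the components of sections of $\FF$ and $\EE$), and the affine intersection product of Cartier divisors is commutative — this is the commutativity of $\div(\phi)\cdot\div(\psi)$ from \cite{polycomplexes}, which one may invoke directly. Part (4) is just Theorem~\ref{whitney} read off in each degree.

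For (2) and (3) I would use the splitting principle: pulling back along a suitable flag/splitting map we may assume $\EE$ (resp.\ $\FF$) splits as a sum of line bundles, reduce via Whitney to first Chern classes, and then invoke the projection formula $f_*(f^*\alpha\cdot Z)=\alpha\cdot f_*(Z)$ and the pullback compatibility $f^*(\alpha\cdot Y)=f^*\alpha\cdot f^*Y$ for Cartier divisors, both of which are standard in the tropical intersection theory of \cite{polycomplexes}; the key point is that $c_1$ of a line bundle is a divisor class and these formulas are already available for divisor classes. Care is needed in (3) with the smoothness hypothesis, which is what makes $f^*$ well-defined as a ring map on $A(X)$.

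The main obstacle I expect is not any single computation but the bookkeeping in reducing (2) and (3) to the line-bundle case while keeping track of which cycle the sections live over and ensuring the sections remain \emph{bounded} after pullback (so that the Chern-class machinery applies); once one is comfortable that boundedness is preserved under pullback along a morphism of tropical cycles and under the splitting construction, the rest is an unwinding of definitions plus the cited divisor-theoretic projection and pullback formulas. Everything genuinely new — the behavior of $c_1$ under $\vee$ and $\otimes$ — is governed entirely by the $1\times1$ transition-function arithmetic $-m$ and $m+m'$, which is the clean payoff of the tropical conventions set up earlier.
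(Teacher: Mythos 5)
Your treatment of the only part the paper actually proves, item (6), is essentially the paper's argument: the author takes a bounded section $\sigma$ of $\LL$, defines a section of $\LL^\vee$ locally by $\widehat\tau_i:=-\widehat\sigma_i$ (resp.\ $\widehat{\sigma}_i+\widehat{\tau}_i$ for $\LL\otimes\LL'$), verifies the $\OO^*$-compatibility on overlaps exactly as your $1\times 1$ transition-function arithmetic $-m_{ij}$ and $m_{ij}+m'_{ij}$ dictates, and concludes $\mathcal D(\LL)+\mathcal D(\LL^\vee)=[\{(U_i,0)\}]$ and $\mathcal D(\LL\otimes\LL')=\mathcal D(\LL)+\mathcal D(\LL')$. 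For the rest, the paper simply cites \cite{chern} for (1), (2), (3), (5) and its own Whitney lemma for (4), so your sketches for (1), (4), (5) are fine and broadly consistent with what is being cited.

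The one genuine problem is your route to (2) and (3) via the splitting principle: in this paper (and in the standard development) the splitting principle is a \emph{consequence} of these properties, not an input. The injectivity of $\phi^*$ in Lemma \ref{splitpf} is proved by exhibiting $\beta\mapsto\phi_*(c_1(\OO_\EE(1))^{n}\cdot\beta)$ as a left inverse, and verifying that this is a left inverse on $\im\phi^*$ is precisely an instance of the projection formula (2); likewise, reducing $c_i(\EE)$ to first Chern classes after pullback silently uses functoriality (3). So your argument is circular as stated. The fix is that no splitting is needed at all: by definition $c_i(\EE)\cdot[Y]=[s^{(i)}\cdot Y]$ is an $i$-fold intersection with the rational-function components of a bounded section, a section of $\EE$ pulls back to a bounded section of $f^*\EE$ with components $s_j\circ f$, and the divisor-level projection and pullback formulas of \cite{polycomplexes} then give (2) and (3) directly, one component at a time. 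This is the shape of the argument in \cite{chern} that the paper is citing.
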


\begin{proof}
See \cite{chern} for (1), (2), (3), and (5). Statement (4) is Theorem \ref{whitney} above. We now prove the two equalities of (6) hold true. We first prove the rational equivalence of sections of these bundles, in other words,
$$\mathcal{D}(\mathcal{L}^\vee) = -\mathcal{D}(\mathcal{L}) \quad \text{ and } \quad \mathcal{D}(\mathcal{L} \otimes \mathcal{L'}) = \mathcal{D}(\mathcal{L}) + \mathcal{D}(\mathcal{L}')$$
in $\Div(X) = \mathcal{K}^*/\mathcal{O}^*$.

\medskip

Let $\sigma:X \to \mathcal{L}$ be an arbitrary bounded section. We now define a section $\tau: X \to \mathcal{L}^\vee$. Let $\{U_1,\dots,U_s\}$ be a cover of $X$ by shared local trivializations of both $\mathcal{L}$ and $\mathcal{L}^\vee$. Define $\tau$ locally on the trivialization $U_i \subseteq |X|$ as 
$$\widehat{\tau}_i := -\widehat{\sigma}_i$$
with $\widehat{\tau}_i$ denoting $p^{(i)} \circ \Phi_i \circ \tau$ (respectively, $\widehat{\sigma}_i := p^{(i)} \circ \Psi_i \circ \sigma$), where
$$\begin{tikzcd}
	{\pi^{-1}(U_i)} && {U_i \times \mathbb{R}} \\
	{U_i} && {\mathbb{R}}
	\arrow["{p^{(i)}}", from=1-3, to=2-3]
	\arrow["{\Phi_i}", from=1-1, to=1-3]
	\arrow["\sigma", from=2-1, to=1-1]
	\arrow["{\widehat{\sigma}_i}", dashed, from=2-1, to=2-3]
\end{tikzcd}$$
We now need only show that $\tau$ is well-behaved on intersections. Observe that on $U_i \cap U_j$ we have that
\begin{align*}
\frac{\widehat{\tau}_i|_{ij}}{\widehat{\tau}_j|_{ij}} &={\widehat{\tau}_i|_{ij}} \odot ({\widehat{\tau}_j|_{ij}})^{-1} \\
&= {\widehat{\tau}_i|_{ij}} - {\widehat{\tau}_j|_{ij}} \\
&=  (-\widehat{\sigma}_i|_{ij}) - (-\widehat{\sigma}_j|_{ij}) \\
&= -(\widehat{\sigma}_i|_{ij} - \widehat{\sigma}_j|_{ij}) = \frac{\widehat{\sigma}_j|_{ij}}{\widehat{\sigma}_i|_{ij}}
\end{align*}
and since $\sigma$ is a bounded rational section of $\mathcal{L}$, we have that $\frac{\widehat{\sigma}_i|_{ij}}{\widehat{\sigma}_j|_{ij}} = \alpha \in \mathcal{O}^*(U_i \cap U_j)$, and so we may conclude that
$$\frac{\widehat{\tau}_i|_{ij}}{\widehat{\tau}_j|_{ij}} = -\alpha \in \mathcal{O}^*(U_i \cap U_j),$$
as desired. So indeed $\tau$ is a bounded rational section of $\mathcal{L}^\vee$, hence $\mathcal{D}(\tau)$ denotes the corresponding Cartier divisor, and we finally have
\begin{align*}
\mathcal{D}(\mathcal{L}) + \mathcal{D}(\mathcal{L}^\vee) &= [\mathcal{D}(\sigma)] + [\mathcal{D}(\tau)] \\
&= [\{ (U_i, \widehat{\sigma}_i) \}] + [\{ (U_i, \widehat{\tau}_i) \}] \\
&= [\{ (U_i, \widehat{\sigma}_i + \widehat{\tau}_i) \}] \\
&= [\{ (U_i, \widehat{\sigma}_i + (-\widehat{\sigma}_i)) \}] = [\{ (U_i,0) \}].
\end{align*}
As $[\{ (U_i,0) \}] \in \Div(X)$ is the identity, this successfully proves that $c_1(\mathcal{L}^\vee) = -c_1(\mathcal{L})$.

\medskip

We now proceed to prove the second statement. Let $\sigma: X \to \mathcal{L}$ and $\tau: X \to \mathcal{L}'$ be two bounded rational sections of our bundles. We now construct a section $\sigma \otimes \tau: X \to \mathcal{L} \otimes \mathcal{L}'$ locally on a cover of $X$ by local trivializations $\{U_1,\dots, U_s\}$ (shared by both bundles) and proceed as we had above. We define on the trivialization $U_i \subseteq |X|$
$$\widehat{(\sigma \otimes \tau)}_i := \widehat{\sigma}_i \otimes \widehat{\tau}_i.$$
Note as both $\widehat{\sigma}_i$ and $\widehat{\tau}_i$ are maps into $G(1)$, this is just equal to $\widehat{\sigma}_i \odot \widehat{\tau}_i$, i.e.  $\widehat{\sigma}_i + \widehat{\tau}_i$. Checking transitions, we see that
\begin{align*}
\frac{\widehat{(\sigma \otimes \tau)}_i|_{ij}}{\widehat{(\sigma \otimes \tau)}_j|_{ij}} &= {\widehat{(\sigma \otimes \tau)}_i|_{ij}}- {\widehat{(\sigma \otimes \tau)}_j|_{ij}} \\
&= (\widehat{\sigma}_i|_{ij} + \widehat{\tau}_i|_{ij}) - (\widehat{\sigma}_j|_{ij} + \widehat{\tau}_j|_{ij}) \\
&= \widehat{\sigma}_i|_{ij} + \widehat{\tau}_i|_{ij} - \widehat{\sigma}_j|_{ij} - \widehat{\tau}_j|_{ij} \\
&= (\widehat{\sigma}_i|_{ij} - \widehat{\sigma}_j|_{ij}) + (\widehat{\tau}_i|_{ij} - \widehat{\tau}_j|_{ij}) = \frac{\widehat{\sigma}_i|_{ij}}{\widehat{\sigma}_j|_{ij}} \odot \frac{\widehat{\tau}_i|_{ij}}{\widehat{\tau}_j|_{ij}}.
\end{align*}
And as $\sigma$ and $\tau$ are both bounded rational sections of line bundles, we have that 
$$\frac{\widehat{\sigma}_i|_{ij}}{\widehat{\sigma}_j|_{ij}} = \alpha \in \mathcal{O}^*(U_i \cap U_j) \quad \text{ and } \quad \frac{\widehat{\tau}_i|_{ij}}{\widehat{\tau}_j|_{ij}} = \beta \in \mathcal{O}^*(U_i \cap U_j).$$
Hence we can conclude that
$$\frac{\widehat{(\sigma \otimes \tau)}_i|_{ij}}{\widehat{(\sigma \otimes \tau)}_j|_{ij}} = \alpha \odot \beta \in \mathcal{O}^*(U_i \cap U_j),$$
and furthermore as both $\alpha$ and $\beta$ are bounded, as must $\alpha \odot \beta = \alpha+\beta$ be. Therefore indeed $\sigma \otimes \tau$ is a section of $\mathcal{L} \otimes \mathcal{L}'$ and $\sigma \otimes \tau \in \Div(X)$. Finally, we can observe that
\begin{align*}
\mathcal{D}(\mathcal{L}) + \mathcal{D}(\mathcal{L'}) &= [\mathcal{D}(\sigma)] + [\mathcal{D}(\tau)] \\
&= [\{(U_i, \widehat{\sigma}_i)\}] + [\{(U_i, \widehat{\tau}_i)\}] \\
&= [\{(U_i, \widehat{\sigma}_i + \widehat{\tau}_i)\}] \\
&= [\{(U_i, \widehat{(\sigma \otimes \tau)}_i\}] = \mathcal{D}(\mathcal{L} \otimes \mathcal{L}').
\end{align*}
So we may therefore conclude that $c_1(\mathcal{L} \otimes \mathcal{L}') = c_1(\mathcal{L}) + c_1(\mathcal{L}')$.
\end{proof}

By Theorem 2.11(f) of \cite{chern}, we have that for a tropical line bundle $\mathcal{L}$ over a rational polyhedral space $X$ that
$$c_1(\mathcal{L}) = \mathcal{D}(\mathcal{L}) = [\mathcal{D}(\sigma)]_\text{rat}$$
where $\sigma$ denotes a bounded rational section of $\mathcal{L}$ and $\mathcal{D}(\sigma)$ is the Cartier divisor class associated to $\sigma$. As all bounded rational sections of a line bundle are rationally equivalent, $\mathcal{D}(\sigma)$ uniquely determines a Cartier divisor class associated to $\mathcal{L}$ denoted $\mathcal{D}(\mathcal{L})$.

\section{Tropical Splitting Principle}

\subsection{Tropical projectivization} One construction we need is the tropical analogue of projective space; see Section 3 of \cite{main} for further details.

\begin{definition}[Tropical projective space]
The \textit{tropical projective space} $\TT \PP^n$ is defined to be the quotient  $(\TT^{n+1})^\times/\sim$, where the equivalence relation is given by
$$(x_0,\dots,x_n) \sim (y_0,\dots,y_n) \iff (\exists \lambda \in \TT^{\times}): \ (x_0,\dots,x_n) = (\lambda \odot y_0,\dots, \lambda \odot y_n).$$
Note that $\TT^\times = \TT \setminus \{-\infty \}$ and the above condition asks that $x_i = \lambda + y_i$ for all $0 \leq i \leq n$. We denote elements (equivalence classes) in $\TT\PP^n$ by $[x_0:\cdots:x_n]$.
\end{definition}

\begin{remark}
We may cover $\TT\PP^n$ with $(n+1)$ affine charts $\TT^n$, in complete analogy with the classical setting: with $\odot$ playing the role of multiplication and $0$ the multiplicative identity, if $x_i \neq -\infty$ we have
\begin{align*}
[x_0:\cdots:x_i:\cdots:x_n] &= (-x_i) \odot [x_0:\cdots: x_{i-1} :x_i: x_{i+1} : \cdots:x_n] \\
&= [\underbrace{x_0 - x_i}_{y_1} : \cdots : \underbrace{x_{i-1} - x_i}_{y_{i}} : 0 : \underbrace{x_{i+1} - x_i}_{y_{i+1}} : \cdots : \underbrace{x_n - x_i}_{y_n}]
\end{align*}
and so we have compatible canonical embeddings $\TT^n \hookrightarrow \TT\PP^n$ for each choice of $x_i \neq -\infty$ for $0 \leq i \leq n$, having an inverse given by 
$$[x_0:\cdots:x_n] \mapsto (y_1,\dots,y_n)$$
on $U_i = \{x \in \TT\PP^n \ | \ x_i \neq -\infty \}$, which cover $\TT\PP^n$.
\end{remark}

\begin{figure}[h]
    \centering
    \includegraphics[width=.4\linewidth]{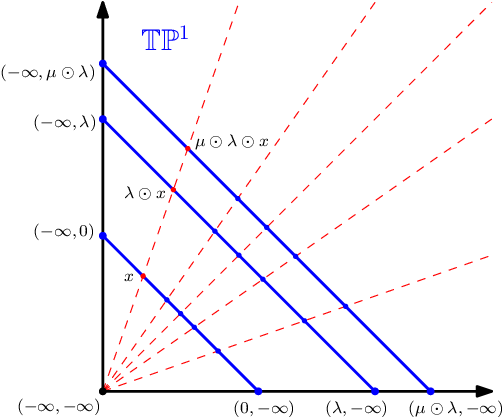}
    
    \caption{The tropical projective line $\TT\PP^1$, with rays emanating from the origin analogous to the classical projective line.}
\end{figure}

This space has a canonical bundle on it that reflects the Serre twisting sheaf $\mathcal{O}_{\PP^n}(1)$ one encounters in the classical setting. 

\begin{definition}
We define the tropical line bundles $\OO_{\tp{n}}(d) \to \tp{n}$, for $d \in \ZZ$, by the local data of
\begin{itemize}
	\item local trivializations over the $(n+1)$-many affine charts $U_i$, $0 \leq i \leq n$, where 
			$$U_i = \{ x \in \tp{n} \ | \ x_i \neq -\infty \},$$
	\item transition functions $M_{ij}: U_i \cap U_j \to G(1) = \RR$ given by\footnote{Importantly note that these are \textit{tropical} operations!}
			$$M_{ij}(x) :=  \left( \frac{x_i}{x_j} \right)^{\odot d} = d(x_i-x_j), $$
	and hence it follows that $\Phi_j \circ \Phi_i^{-1}: (U_i \cap U_j) \times \TT \to (U_i \cap U_j) \times \TT$ is given by
			$$(x,t) \mapsto (x,M_{ij}(x)\odot t) = \left(x, \left( \frac{x_i}{x_j} \right)^{d} \odot t\right) = (x,t+d(x_i-x_j)).$$
\end{itemize}
Note that $\mathcal{O}_\tp{n}(0)$ is just the trivial line bundle $\tp{n}\times \TT$.

\end{definition}

\begin{figure}[h]
    \centering
    \includegraphics[width=.3\linewidth]{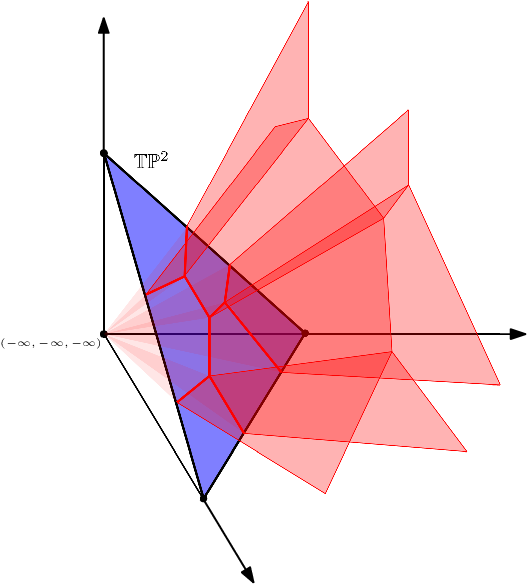}
    \caption{The pullback of $\OO_{\TT\PP^2}(1)$ along an embedding of a curve into $\TP^2$.}
\end{figure}

\begin{theorem} \label{serre-twist}
The tropical line bundle $\OO_\tp{n}(d) \to \tp{n}$ has a first Chern class of
$$c_1(\OO_\tp{n}(d)) = d.$$
\end{theorem}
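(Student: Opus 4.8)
The plan is to reduce to the case $d=1$ using multiplicativity of $c_1$, and then identify $c_1(\OO_\tp{n}(1))$ with the class of the standard tropical hyperplane via the section-to-divisor dictionary.

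\textbf{Reduction to $d=1$.} First I would observe that $\OO_\tp{n}(d) \cong \OO_\tp{n}(1)^{\otimes d}$ for $d \geq 1$, that $\OO_\tp{n}(d) \cong \left(\OO_\tp{n}(1)^{\vee}\right)^{\otimes(-d)}$ for $d \leq -1$, and that $\OO_\tp{n}(0)$ is the trivial bundle. This is immediate from the transition data: on $\G(1)$ the tensor product is just the tropical product $\odot$, so tensoring the transition function $M_{ij}(x) = x_j - x_i$ of $\OO_\tp{n}(1)$ with itself $d$ times yields $d(x_j - x_i)$, the transition function of $\OO_\tp{n}(d)$; dually $\left((x_j - x_i)^T\right)^{-1} = x_i - x_j$ gives $\OO_\tp{n}(-1)$. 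Applying the two identities of Theorem \ref{chernprops}(6), namely $c_1(\LL \otimes \LL') = c_1(\LL) + c_1(\LL')$ and $c_1(\LL^\vee) = -c_1(\LL)$, inductively then gives $c_1(\OO_\tp{n}(d)) = d \cdot c_1(\OO_\tp{n}(1))$ for all $d \in \ZZ$. So it suffices to prove $c_1(\OO_\tp{n}(1)) = 1$.

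\textbf{The case $d=1$.} By Theorem \ref{chernprops}(5), $c_1(\OO_\tp{n}(1)) = [\mathcal D(\OO_\tp{n}(1))]$, the Cartier divisor class of any bounded rational section, so I need only exhibit one convenient section and compute its associated Weil divisor. The candidate is the section $s$ whose representative over the chart $U_i = \{x_i \neq -\infty\}$ is the homogenized tropical linear form
$$\widehat s_i \ := \ x_i - \bigoplus_{k=0}^{n} x_k \ = \ x_i - \max_{0 \le k \le n} x_k,$$
a function depending only on $[x_0 : \cdots : x_n]$ and locally integer affine on $U_i$; one checks the cocycle relation $\widehat s_j - \widehat s_i = x_j - x_i = M_{ij}$, so the $\widehat s_i$ glue to a genuine rational section of $\OO_\tp{n}(1)$. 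The divisor computation then takes place on $U_0 \cong \RR^n$ with affine coordinates $y_k = x_k - x_0$, where $\widehat s_0 = -\max(0, y_1, \dots, y_n)$: refining $\RR^n$ by the fan on whose cones $\widehat s_0$ is linear and applying the weight formula for the associated Weil divisor (the correction term vanishes by the balancing of $\tp{n}$), one finds that $\div(\widehat s_0)$ is, up to the sign that the conventions fix, the standard tropical hyperplane $H \subseteq \tp{n}$ — the balanced weight-one fan with rays $-e_1, \dots, -e_n$ and $e_1 + \cdots + e_n$, the tropical analogue of the divisor of $\mathcal O_{\PP^n}(1)$. Gluing over all charts shows $\mathcal D(s)$ represents $[H]$, and since $[H]$ is the distinguished generator of $A_{n-1}(\tp{n}) \cong \ZZ$ (see \cite{main}), we get $c_1(\OO_\tp{n}(1)) = 1$, hence $c_1(\OO_\tp{n}(d)) = d$.

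\textbf{Main obstacle.} The formal skeleton — the reduction to $d=1$ and the section-divisor correspondence — is routine once Theorem \ref{chernprops} is in hand; the delicate part is the normalization bookkeeping around the chosen section and its divisor: verifying that $s$ is bounded over each $U_i$ for the model of $\tp{n}$ being used, that it trivializes $\OO_\tp{n}(1)$ rather than its dual, and that the weight formula returns $+1$ (not $-1$) on every facet of $H$, so that the final answer comes out as $+d$ and not $-d$. Getting every sign and orientation consistent there is the real content of the argument.
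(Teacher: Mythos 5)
Your proposal follows the paper's proof essentially step for step: the paper likewise reduces to $d=1$ via the dual and tensor identities of Theorem \ref{chernprops}(6) and then exhibits the explicit bounded rational section $f = 0 \oplus x_1 \oplus \cdots \oplus x_n$ of $\OO_{\tp{n}}(1)$ (chart by chart, the negative of your $\widehat{s}_i$), concluding $c_1(\OO_{\tp{n}}(1)) = [\mathcal{D}(f)] = 1$. Your closing identification of the divisor with the standard tropical hyperplane generating $A_{n-1}(\tp{n}) \cong \ZZ$ is actually spelled out more explicitly than in the paper, which asserts the class is $1$ directly, but the strategy is the same.
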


\begin{proof}
Observe that 
$$\OO_\tp{n}(1)^\vee \cong \OO_\tp{n}(-1) \quad \text{ and } \quad \OO_\tp{n}(d) \cong \OO_\tp{n}(1)^{\otimes d} = \underbrace{\OO_\tp{n}(1) \otimes \cdots \otimes \OO_\tp{n}(1)}_{\text{$d$-many}}$$
for $d \geq 1$. It suffices to show that $c_1(\OO_\tp{n}(1)) = 1$, as the result then follows by properties of Chern classes of line bundles under dualizing and tensoring (see Theorem \ref{chernprops}).

\medskip

We claim that $f = 0 \oplus x_1 \oplus \cdots \oplus x_n = \max(0, x_1, \ldots, x_n)$ defines a bounded rational section of $\OO_\tp{n}(1)$. By Theorem \ref{chernprops}(5), we then have $c_1(\OO_\tp{n}(1)) = [\DD(f)]$, where $\DD(f)$ denotes the associated Cartier divisor.

First observe that $f$ is well-defined on projective equivalence classes, as
$$f(\lambda \odot x) = f([\lambda \odot 0: \lambda \odot x_1 : \cdots : \lambda \odot x_n]) =  \max\{ \lambda, \lambda + x_1, \dots, \lambda + x_n \} = \lambda \odot f(x)$$
for any $\lambda \in \TT^\times = \RR$. On the standard affine chart $U_i$ (where $x_i = 0$), the local representative of $f$ is
$$f_i = \max(x_0, \ldots, x_{i-1}, 0, x_{i+1}, \ldots, x_n),$$
which is a bounded rational function on $U_i$ (as it is a tropical polynomial). We now verify the transition condition: on $U_i \cap U_j$, passing from the $U_i$-normalization (where $x_i = 0$) to the $U_j$-normalization (where $x_j = 0$) subtracts $x_j$ from every coordinate, so
$$f_j = f_i - x_j = f_i + (x_i - x_j) = f_i + M_{ij}(x),$$
where $M_{ij}(x) = x_i - x_j$ is the transition function of $\OO_\tp{n}(1)$. Hence $f$ is indeed a bounded rational section of $\OO_\tp{n}(1)$.

\medskip

It remains to show that $[\DD(f)] = 1$. On each chart $U_i \cong \RR^n$, the Weil divisor $\div(f_i)$ is the locus where $f_i$ fails to be linear, namely the standard tropical hyperplane --- the $(n-1)$-dimensional balanced polyhedral complex where the maximum $\max(x_0, \ldots, 0, \ldots, x_n)$ is attained by at least two terms. This represents the degree-$1$ generator of $A_{n-1}(\tp{n})$, so $c_1(\OO_\tp{n}(1)) = [\DD(f)] = 1$.
\end{proof}

Similarly, various bundles from the tropical toric setting can be reinterpreted in the context of \cite{chern}. For more on tropical toric varieties, see \S 5.3.3 of \cite{main}.

\begin{proposition}[Birkhoff--Grothendieck Theorem] \label{p1-split}
    Every rank $n$ tropical vector bundle $E \to \tp{1}$ splits into a direct sum of line bundles, i.e.
    $$E \cong \bigoplus_{i=1}^n \OO(d_i).$$
\end{proposition}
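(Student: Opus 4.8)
The plan is to adapt the classical proof of the Birkhoff--Grothendieck theorem to the tropical setting, using the tools built up in the excerpt. The classical strategy proceeds by induction on the rank $n$: one shows that any rank $n$ bundle contains a line subbundle of maximal degree, splits it off using the fact that extensions are classified by an $\mathrm{Ext}^1$ group which vanishes in the right range, and then applies the inductive hypothesis to the rank $n-1$ quotient. In our tropical framework this becomes cleaner in one respect and trickier in another: cleaner because we already know (from the Proposition following the definition of subbundles, and from the discussion of quotient bundles) that \emph{every} short exact sequence of tropical vector bundles splits, so once we locate a line subbundle $\mathcal{L} \hookrightarrow E$ we automatically get $E \cong \mathcal{L} \oplus (E/\mathcal{L})$ with $E/\mathcal{L}$ of rank $n-1$; trickier because we must produce such a line subbundle in the first place, which is where the geometry of $\tp{1}$ enters.

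First I would set up the induction: the base case $n = 1$ is vacuous, since a rank $1$ bundle is by definition a line bundle, and by Theorem \ref{serre-twist} together with the classification of line bundles on $\tp{1}$ (line bundles correspond to Cartier divisor classes, i.e.\ to $\Pic(\tp{1}) \cong \ZZ$, each class realized by some $\OO(d)$) it is isomorphic to some $\OO(d_1)$. For the inductive step, take a rank $n$ bundle $E \to \tp{1}$ with $n \geq 2$. Using the explicit affine cover $\tp{1} = U_0 \cup U_1$ with $U_i \cong \TT$ and the single overlap $U_0 \cap U_1$, the bundle $E$ is determined by a transition map $M_{01}: U_0 \cap U_1 \to \G(n)$, whose entries are regular invertible functions (integer-affine-linear, up to the permutation part). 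Writing $M_{01} = E_\sigma \odot D(a_1, \dots, a_n)$ as in Remark \ref{help}, with each $a_k$ an integer-affine function on the overlap, one can attempt to normalize: the permutation $\sigma$ decomposes the transition data into blocks corresponding to the cycles of $\sigma$, and within each cycle the composite of the affine gluing functions around the cycle produces, on the circle-like overlap $U_0 \cap U_1$, an integer $d$ which is the degree of the corresponding line-bundle summand. I would make this normalization precise and show that after a change of local trivialization (multiplying $M_{01}$ on the left and right by elements of $\G(n)$ valued in regular invertible functions, which does not change the isomorphism class) one can bring $M_{01}$ into block-diagonal form with $1\times 1$ blocks equal to the transition functions $d_i(x_1 - x_0)$ of $\OO(d_i)$.

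The cleanest route, and the one I would actually write up, is to bypass the cycle analysis by extracting a single line subbundle directly. Restrict $E$ to each chart: over $U_0 \cong \TT$ and over $U_1 \cong \TT$ the bundle is trivial, so a constant section $e_1 \mapsto$ (first basis vector) in each chart is available; the obstruction to gluing these into a global \emph{sub}line-bundle is measured by the relevant component of the transition matrix, and — choosing the basis vector adapted to $\sigma$ so that it sits inside a single $\sigma$-cycle — one gets a well-defined sub-line-bundle $\mathcal{L} \cong \OO(d)$ of $E$ for the integer $d$ associated to that cycle. By the splitting of the subbundle exact sequence (the Proposition after the subbundle definition, together with the quotient bundle definition), $E \cong \mathcal{L} \oplus E'$ with $E'$ of rank $n-1$; the inductive hypothesis finishes the proof, giving $E \cong \OO(d) \oplus \bigoplus_{i=1}^{n-1} \OO(d_i')$.

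\textbf{Main obstacle.} The crux is the normalization of the transition map $M_{01}$ over the single overlap $U_0 \cap U_1$ — specifically, showing that by pre- and post-composing with $\G(n)$-valued regular invertible functions one can kill the off-diagonal ``affine'' part and arrange the permutation $E_\sigma$ so that $M_{01}$ becomes block-diagonal, thereby exhibiting a genuine line subbundle of $E$ rather than merely a subbundle over each chart separately. This is the step where the one-dimensionality of $\tp{1}$ (and the resulting simplicity of its nerve: two charts, one contractible-type overlap) is essential — on a higher-dimensional base one cannot expect such a normalization, mirroring the classical failure of Birkhoff--Grothendieck beyond $\PP^1$. The routine parts, which I would not belabor, are: checking that the normalizing changes of trivialization have regular invertible entries; identifying the resulting $1\times 1$ blocks with the $\OO(d_i)$ transition functions via Theorem \ref{serre-twist}; and invoking the already-established splitting of short exact sequences to run the induction.
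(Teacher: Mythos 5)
Your proposal is correct and its core step is the same as the paper's: reduce to a single nontrivial transition map $M_{01}\colon U_0\cap U_1\to \G(n)$, write it as $E_\sigma\odot D(a_1,\dots,a_n)$, normalize it to diagonal form by a change of trivialization, and read off each summand $\OO(d_i)$ from the integer slope of the regular invertible entry $a_i$. The paper does this in one shot (citing Lemma~3.2 of the Allermann reference to justify that an arbitrary trivializing cover can be replaced by one with at most one nontrivial transition map --- a point you assume rather than justify when you fix the two standard charts), whereas you wrap it in an induction via a line subbundle and the splitting of short exact sequences; that packaging works but buys nothing here, since once $M_{01}$ is diagonalized the full splitting is immediate. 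One conceptual correction: the overlap $U_0\cap U_1\cong\RR$ is an interval, not ``circle-like,'' so there is no monodromy and no ``composite of gluing functions around a cycle'' --- with a single connected overlap and no triple intersections you may simply post-compose the trivialization on $U_1$ with the constant matrix $E_{\sigma^{-1}}$ to kill the permutation part outright, after which each degree $d_i$ is just the slope of the corresponding affine entry. Your instinct that cycles of $\sigma$ should contribute indecomposable blocks is the right one for bases with nontrivial fundamental group (e.g.\ metric graphs, as in the Gross--Ulirsch--Zakharov picture), but it is vacuous on $\tp{1}$; this is precisely why the normalization you flag as the ``main obstacle'' is in fact routine here.
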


\begin{proof}
Let $E \to \tp{1}$ be a rank $n$ tropical vector bundle and $\{U_1,\dots, U_s\}$ the associated cover by local trivializations. By Lemma 3.2 of \cite{chern}, we may assume without loss of generality that there exists at most one non-trivial transition map $M_{ij}: U_i \cap U_j \to G(n)$. Up to a (tropical) change of basis, we may assume that this transition matrix takes the form
$$M_{ij} = \begin{pmatrix} g_1 & -\infty & \cdots & -\infty \\
-\infty & g_2 & \cdots & -\infty \\
\vdots & \vdots & \ddots & \vdots \\
-\infty & -\infty & \cdots & g_n  \end{pmatrix}$$
where $g_i \in \mathcal{O}^*(U_i \cap U_j) \cong \mathcal{O}^*(\RR)$ is a regular invertible function. That is to say, $g_i(x) = d_i \odot x$ for integers $d_i$. It then follows that we may decompose $E$ as the direct sum.
\end{proof}

\begin{remark}
    Note this same result has been obtained independently in a different manner (namely by pushing forward line bundles on covers of metric graphs in the work of \cite{gross2022principal}).
\end{remark}

\begin{definition}[projectivization of a bundle]
Let $\pi: \EE \to X$ be a tropical vector bundle of rank $n$. The \textit{projectivization of $\EE$} is a pair $(\TP(\EE), \phi)$ of a rational polyhedral space $\TP(\EE)$ (a ``tropical fiber bundle") and morphism $\phi: \TP(\EE) \to X$ (the projection of the bundle) defined as follows:
\begin{itemize}
    \item $\TP(\EE)$ is the fiber bundle resulting from projectivizing the stalks $\EE_p \cong \TT^n$ of $\EE$, i.e. 
    $$\phi^{-1}(p) := \TP(\EE_p) = (\EE_p \setminus \{ -\infty \})/\sim \ \cong \ \TP^{n-1}$$
    where $v \sim v'$ in $\EE_p\setminus \{-\infty\}$ if there exists some $\lambda \in \TT^\times$ such that $v = \lambda \odot v'$. 
    \item On the level of trivializations $U_i$ of $\EE$, we have
            $$\phi^{-1}(U_i) \cong U_i \times \TP^{n-1},$$
    with transition maps on $U_i \cap U_j$ given by $[M_{ij}] \in \PP G(n)$, where
    $$\PP G(n) := G(n) / (A \sim \lambda \odot A)$$
    for $\lambda \in \TT^\times$. Hence it follows that $\phi$ is locally on a trivialization $U$ given by
    $\phi(p,[x]) = p$
    where $p \in U$ and $[x] \in \TP(\EE_p) \cong \TP^{n-1}$.
\end{itemize}
\end{definition}

\begin{remark}
    The total space $\TP(\EE)$ is a rational polyhedral space whose sedentary strata arise from two sources: the boundary of the base $X$ (sedentary strata of $X$) and the boundary of the fibers $\TP^{n-1}$ (sedentary strata of tropical projective space). That is, $\TP(\EE)$ naturally has the structure of a ``tropical fiber bundle" with boundary. The intersection theory on $\TP(\EE)$ is that of rational polyhedral spaces as developed in \cite{main}.
\end{remark}

Our motivation for constructing a notion of projectivization in this context is due to the following important property: let us first consider the pullback of a tropical vector bundle $\EE$ of rank $n$ along $\phi$, as follows:
\[\begin{tikzcd}
	{\varphi^*(\mathcal{E})} & {\mathcal{E}} \\
	{\mathbb{T}\mathbb{P}(\mathcal{E})} & X
	\arrow["{\hat{\pi}}"', dashed, from=1-1, to=2-1]
	\arrow["{\hat{\varphi}}", dashed, from=1-1, to=1-2]
	\arrow["\lrcorner"{anchor=center, pos=0.125}, draw=none, from=1-1, to=2-2]
	\arrow["\varphi", from=2-1, to=2-2]
	\arrow["\pi", from=1-2, to=2-2]
\end{tikzcd}\]

The pullback $\phi^*(\EE)$ is also of rank $n$, but notably has a canonical line subbundle $\LL_\EE$, i.e. we associate to the point $(p,[L]) \in \TP(\EE)$ the vectors $v$ in the $1$-dimensional subspace $L \subseteq \EE_p \cong \TT^n$. Let us make this precise. Letting $\QQ_\EE$ denote the quotient $\phi^*(\EE)/\LL_\EE$, we have a canonical short exact sequence as follows.

\begin{lemma} \label{ses}
    Let $\EE \to X$ be a rank $n$ tropical vector bundle. Then
\[\begin{tikzcd}
	0 & {\LL_\EE} & {\phi^*(\EE)} & {\QQ_\EE} & 0.
	\arrow[from=1-4, to=1-5]
	\arrow[from=1-3, to=1-4]
	\arrow[from=1-2, to=1-3]
	\arrow[from=1-1, to=1-2]
\end{tikzcd}\]
    is a short exact sequence of tropical vector bundles over $\TP(\EE)$.
\end{lemma}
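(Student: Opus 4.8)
The plan is to construct the subbundle $\LL_\EE \subseteq \phi^*(\EE)$ explicitly via local trivializations and then invoke the earlier results on subbundles (in particular the Proposition guaranteeing a complementary subbundle, so that $\QQ_\EE = \phi^*(\EE)/\LL_\EE$ makes sense) to split off the quotient. First I would fix an open cover $\{U_i\}$ of $X$ by simultaneous local trivializations of $\EE$, with transition maps $M_{ij}: U_i \cap U_j \to \G(n)$. Over each $U_i$ we have $\phi^{-1}(U_i) \cong U_i \times \TP^{n-1}$, which I would further cover using the standard affine charts of $\TP^{n-1}$: let $V_{i,k} = \{(p,[x]) : x_k \neq -\infty\}$ for $1 \le k \le n$. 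On $V_{i,k}$ the fiber line $L \subseteq \EE_p$ associated to $(p,[x])$ is spanned by the representative whose $k$-th coordinate is normalized to $0$, which identifies $L$ with a copy of $\RR$ (together with $-\infty$) inside $\TT^n$; this gives a homeomorphism $(\hat\pi)^{-1}(V_{i,k}) \cong V_{i,k} \times \RR$. I would check this identification satisfies condition (b) of the subbundle definition — that $\LL_\EE$ sits inside $\phi^*(\EE)$ as the coordinate subspace $\langle e_k \rangle_\RR$ in the trivialization $V_{i,k}$ — which is immediate from the normalization.

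Next I would verify that these local pieces patch to a genuine tropical vector bundle of rank $1$, i.e. work out the transition maps between the charts $V_{i,k}$ and $V_{j,\ell}$ and confirm they land in $\G(1) = \RR$ with regular invertible (or $-\infty$) entries. The transition over $V_{i,k} \cap V_{i,\ell}$ within a single $U_i$ amounts to renormalizing from the $k$-th coordinate being $0$ to the $\ell$-th being $0$, contributing the regular invertible function $-x_\ell$ (in the coordinates where $x_k = 0$); the transition over $V_{i,k} \cap V_{j,k}$ across $U_i \cap U_j$ is read off from the $(k,k)$-type entry of $M_{ij}$ after the action of $[M_{ij}] \in \PP\G(n)$ on the projectivized fiber, and since $M_{ij} \in \G(n)$ has regular invertible finite entries its effect on the normalized representative is again a regular invertible function. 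Because $M_{ij} \in \G(n)$ is (tropically) a permutation-times-diagonal matrix by Remark \ref{help}, the line $L$ is carried to another coordinate line, so the index $k$ may change under the transition, but this only permutes which chart $V_{j,\ell}$ one lands in and does not disturb regularity. This establishes that $\LL_\EE \to \TP(\EE)$ is a rank $1$ subbundle of $\phi^*(\EE)$.

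With $\LL_\EE$ in hand, the short exactness is then essentially formal: by the Proposition on subbundles (every subbundle has a complement) there is a rank $n-1$ subbundle $\EE' \subseteq \phi^*(\EE)$ with $\phi^*(\EE) \cong \LL_\EE \oplus \EE'$, and $\QQ_\EE$ is \emph{defined} to be this quotient $\EE' = \phi^*(\EE)/\LL_\EE$; the inclusion $\LL_\EE \hookrightarrow \phi^*(\EE)$ and the projection $\phi^*(\EE) \to \QQ_\EE$ are morphisms of tropical vector bundles by construction, and exactness in the middle holds locally because in each trivialization the sequence becomes the split sequence $0 \to \RR \to \RR^n \to \RR^{n-1} \to 0$ of coordinate subspaces. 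One should also note that $\TP(\EE)$ and these bundles all admit bounded rational sections locally, so they fit the standing conventions; this is inherited from $\EE$ since the transition data is pulled back along $\phi$.

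The main obstacle I expect is the bookkeeping in the second step: carefully tracking how the projectivized transition matrices $[M_{ij}] \in \PP\G(n)$ act on the affine charts of $\TP^{n-1}$ and confirming that the resulting $1\times 1$ transition functions for $\LL_\EE$ are regular invertible — in particular handling the fact that the distinguished coordinate index can jump between charts when the permutation part of $M_{ij}$ is nontrivial, so that the cover $\{V_{i,k}\}$ must be re-indexed compatibly. Everything else (continuity, the balancing/weight condition (d), and the abstract splitting) follows from pulling back the structure of $\EE$ along $\phi$ and from results already established above.
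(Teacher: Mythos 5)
Your proposal follows essentially the same route as the paper: exhibit $\LL_\EE$ as a rank~$1$ subbundle of $\phi^*(\EE)$ by trivializing it locally as a coordinate line, then invoke the complementary-subbundle proposition and the definition of quotient bundles to obtain the split exact sequence. Your version is in fact somewhat more careful than the paper's, since you explicitly refine the cover by the affine charts of $\TP^{n-1}$ and track the transition functions in $\PP\G(n)$, details the paper's proof passes over.
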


\begin{proof}
We show that $\LL_\EE$ is a rank $1$ subbundle of $\phi^*(\EE)$ in the tropical sense, from which it follows that $\QQ_\EE$ is the corresponding quotient and the sequence is exact. Let $U_1,\dots,U_s$ be an open cover by trivializations of $\TT\PP(\EE)$. As $\LL_\EE \to \TP(\EE)$ is a line bundle, we need only show that the maps
$$\Phi_i|_{\hat{\pi}|_{\LL_\EE}^{-1}(U_i)}: (\hat{\pi}|_{\LL_\EE}^{-1})(U_i) \rightarrow U_i \times \langle e_j\rangle_{\RR}$$
for $1 \leq i \leq s$ are isomorphisms, where $j \in [n]$ is fixed with $\EE_p \cong \langle e_1,\dots,e_n \rangle_\RR$ for $p \in U_i$. By virtue of $\LL_\EE$ being a line bundle, we have that 
$$(\hat{\pi}|_{\LL_\EE}^{-1})(U_i) \cong U_i \times L^\times \cong U_i \times \RR,$$
where $L^\times$ denotes the units of $\TT \cong L \subseteq \EE_p$, which is just $\RR$. As we also have that there is a natural inclusion $L \subseteq \EE_p$, by taking $\langle e_1 \rangle_\RR$ to be a basis vector of $L$ without loss of generality, this gives us a natural isomorphism
$$U_i \times \langle e_1 \rangle_\RR \rightarrow  (\hat{\pi}|_{\LL_\EE}^{-1})(U_i) \cong U_i \times \RR,$$
where $\hat{\pi}^{-1}(U_i) \cong U_i \times \langle e_1,\dots,e_n \rangle_\RR$, providing precisely what was needed.
\end{proof}

\begin{definition}
    We define the canonical line subbundle $\LL_\EE$ of $\phi^*(\EE)$ above to be $\OO_\EE(-1)$. The dual of this bundle is denoted by
    $$\OO_\EE(1) := \OO_\EE(-1)^\vee$$
    and we will call $\OO_\EE(1)$ the \textit{universal} (or \textit{tautological}) \textit{line sub-bundle} on $\TP(\EE)$.
\end{definition}

\begin{lemma} \label{splitpf}
    Let $\EE \to X$ be a rank $n+1$ tropical vector bundle. Then the induced pullback map
    $$\phi^*: A_k(X) \rightarrow A_{k+n}(\TT\PP(\EE))$$
    is a split monomorphism.
\end{lemma}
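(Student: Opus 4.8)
The plan is to mimic the classical proof of the splitting principle: construct an explicit left inverse to $\phi^*$ using pushforward along $\phi$ together with the Chern classes of the tautological bundle $\OO_\EE(1)$. Since $\TT\PP(\EE) \to X$ has fibers $\TT\PP^{n}$ (here $\EE$ has rank $n+1$), and since $\phi$ is a proper tropical cycle morphism with fiber dimension $n$, we have a pushforward $\phi_*: A_{k+n}(\TT\PP(\EE)) \to A_k(X)$ on the level of Chow groups. The candidate splitting is $Z \mapsto \phi_*\!\left( c_1(\OO_\EE(1))^{n} \cdot Z \right)$, and the task reduces to showing $\phi_*\!\left( c_1(\OO_\EE(1))^{n} \cdot \phi^*(-) \right) = \mathrm{id}_{A_k(X)}$.

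First I would set up pushforward for the bundle projection $\phi$ and record the projection formula from Theorem~\ref{chernprops}(2), namely $\phi_*(c_1(\phi^*\LL) \cdot Z) = c_1(\LL)\cdot \phi_*(Z)$, which already tells us that the composite $\phi_* \circ (c_1(\OO_\EE(1))^{n} \cdot -) \circ \phi^*$ acts on $\phi^*(Y)$ by first pulling back $Y$, then intersecting with the fiberwise class $c_1(\OO_\EE(1))^{n}$, then pushing forward. Second, I would reduce to a purely fiberwise computation: because the question is local on $X$ and $\phi^*$ and $\phi_*$ are compatible with restriction to a trivializing open $U$, over which $\TT\PP(\EE)|_U \cong U \times \TT\PP^{n}$ and $\OO_\EE(1)|_U$ restricts to (a bounded-section twist of) $\OO_{\TT\PP^{n}}(1)$, it suffices to verify that $\phi_*\!\left(c_1(\OO_{\TT\PP^{n}}(1))^{n} \cdot [\TT\PP^{n}]\right) = [\mathrm{pt}]$, i.e. that the top self-intersection of the hyperplane class on $\TT\PP^{n}$ has degree $1$. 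Third, this last fact follows from Theorem~\ref{serre-twist} ($c_1(\OO_{\TT\PP^{n}}(1)) = 1$, i.e. it is the hyperplane divisor class) together with an inductive application of the affine intersection product: intersecting the hyperplane divisor with itself $n$ times on $\TT\PP^{n}$ cuts down dimension by one each time — using that $\OO_{\TT\PP^{n}}(1)$ restricted to a tropical hyperplane $\TT\PP^{n-1}$ is again $\OO_{\TT\PP^{n-1}}(1)$ — and terminates in a single point of weight $1$.

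With the fiberwise degree computation in hand, the general case assembles as follows: for $Z = \phi^*(Y)$ with $Y \in Z_k(X)$, the projection formula and compatibility of intersection product with pullback (Theorem~\ref{chernprops}(3)) give
\[
\phi_*\!\left( c_1(\OO_\EE(1))^{n} \cdot \phi^*(Y) \right) = \left( \phi_*\!\left( c_1(\OO_\EE(1))^{n} \cdot \phi^*[X] \right) \right) \cdot Y = 1 \cdot Y = Y,
\]
where the middle equality is exactly the fiberwise "degree one" statement applied over each facet of $X$ (and the balancing/weight bookkeeping is inherited from condition (d) of Definition~\ref{vbdef}, which guarantees the weights of $\TT\PP(\EE)$ match those of $X$ fiberwise). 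Hence $\phi^*$ has a left inverse and is a split monomorphism.

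The main obstacle I anticipate is not the fiberwise $\TT\PP^{n}$ degree computation (that is a concrete, if slightly fiddly, induction using the affine intersection product and Theorem~\ref{serre-twist}) but rather making the pushforward $\phi_*$ along the "tropical fiber bundle" $\phi: \TT\PP(\EE) \to X$ rigorous: one must check it is well-defined on tropical Chow groups, respects rational equivalence, and satisfies the projection formula in this setting — the excerpt only gives $f_*$ for morphisms of tropical cycles and states the projection formula as Theorem~\ref{chernprops}(2), so some care is needed to confirm $\phi$ (a fiber bundle, not an embedding) is covered, likely by working locally over trivializations where $\phi$ is literally the projection $U \times \TT\PP^{n} \to U$ and gluing. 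A secondary subtlety is that $\OO_\EE(1)$ is only canonically defined up to the bounded-section ambiguity, so one should phrase everything in terms of $c_1(\OO_\EE(1)) \in A^1(\TT\PP(\EE))$, which is well-defined by Theorem~\ref{chernprops}(5) and the remarks preceding it.
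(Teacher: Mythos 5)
Your proposal is correct and follows essentially the same route as the paper: the paper also defines the left inverse $\beta \mapsto \phi_*\bigl(c_1(\OO_\EE(1))^{n}\cdot\beta\bigr)$, reduces to a local computation over trivializations $\phi^{-1}(U_i)\cong U_i\times\TP^{n}$, and iteratively intersects with the hyperplane class of $\OO_{\tp{n}}(1)$ to cut down to $[U_i]$ before applying the projection formula. If anything, your write-up is more explicit than the paper's about the points that need care (well-definedness of $\phi_*$ for a fiber-bundle projection rather than an embedding, and the bounded-section ambiguity in $\OO_\EE(1)$), which the paper passes over silently.
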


\begin{proof}
    We show that $\beta \mapsto \phi_*(c_1(\OO_\EE(1))^{n} \cdot \beta)$ for $\beta \in \im \phi^* \subseteq A_{k+n}(\TP(\EE))$ is an inverse of $\phi^*$. It suffices to prove that $\phi_*(c_1(\OO_\EE(1))^{n} \cdot [\TP(\EE)]) = [X]$.

    \medskip

    The key input is the \textit{projection formula} (\cite{tropmanifolds}; cf.\ Theorem \ref{chernprops}(2) for the special case of pullback Chern classes): for $\alpha \in A(\TP(\EE))$ and $\gamma \in A(X)$,
    $$\phi_*(\alpha \cdot \phi^*(\gamma)) = \phi_*(\alpha) \cdot \gamma.$$
    It therefore suffices to show that $\phi_*(c_1(\OO_\EE(1))^{n} \cdot [\TP(\EE)]) = [X]$, as then for $\beta = \phi^*(\gamma)$ we have
    $$\phi_*(c_1(\OO_\EE(1))^{n} \cdot \phi^*(\gamma)) = \phi_*(c_1(\OO_\EE(1))^{n} \cdot [\TP(\EE)]) \cdot \gamma = [X] \cdot \gamma = \gamma.$$

    \medskip

    To establish this, we use fiber integration. Each fiber of $\phi: \TP(\EE) \to X$ is isomorphic to $\TP^n$ as a rational polyhedral space (with boundary given by the sedentary strata of $\TP^n$). On $\TP^n$, Theorem \ref{serre-twist} gives $c_1(\OO_{\tp{n}}(1)) = 1 \in A_{n-1}(\TP^n)$, and iterating,
    $$c_1(\OO_{\tp{n}}(1))^{n} \cdot [\TP^n] = [\text{pt}] \in A_0(\TP^n),$$
    where the intersection takes place in the Chow ring of $\TP^n$ as a rational polyhedral space. Since the restriction of $\OO_\EE(1)$ to each fiber is $\OO_{\TP^n}(1)$, the pushforward $\phi_*$ performs fiberwise integration, yielding
    $$\phi_*(c_1(\OO_\EE(1))^{n} \cdot [\TP(\EE)]) = [X],$$
    as needed. The intersection theory on $\TP^n$ (including its sedentary strata) is that of rational polyhedral spaces \cite{main}; see also \cite{tropmanifolds} for the projection formula and pushforward in the tropical setting.
\end{proof}

\begin{corollary}[Tropical Splitting Principle] \label{splitting}
For a tropical vector bundle $\pi: \mathcal{E} \to X$ of rank $n$ over a rational polyhedral space $X$, there is an associated rational polyhedral space $Y$ and a morphism $f: Y \to X$ such that
\begin{enumerate}
	\item the pullback bundle $f^*\mathcal{E}$ is a direct sum of tropical line bundles, and
	\item the induced pullback map $f^*: A(X) \to A(Y)$ is injective.
\end{enumerate}
\end{corollary}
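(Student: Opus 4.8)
The plan is to argue by induction on the rank $n$ of $\EE$, peeling off one tautological line subbundle at a time via projectivization and invoking Lemma \ref{splitpf} to keep the Chow pullback injective at every stage. The base case $n=1$ is immediate, since then $\EE$ is already a line bundle and one may take $Y=X$ and $f=\id_X$.

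For the inductive step, suppose the statement holds for all tropical vector bundles of rank strictly less than $n$, and let $\EE\to X$ have rank $n\geq 2$. First I would form the projectivization $\phi\colon\TP(\EE)\to X$. By Lemma \ref{ses} there is a short exact sequence $0\to\OO_\EE(-1)\to\phi^*\EE\to\QQ_\EE\to 0$ of tropical vector bundles over $\TP(\EE)$, and because every tropical subbundle admits a complement, this splits: $\phi^*\EE\cong\OO_\EE(-1)\oplus\QQ_\EE$ with $\QQ_\EE$ of rank $n-1$. Applying the inductive hypothesis to the bundle $\QQ_\EE\to\TP(\EE)$ produces a tropical cycle $Y$ together with a morphism $g\colon Y\to\TP(\EE)$ such that $g^*\QQ_\EE$ is a direct sum of line bundles and $g^*\colon A(\TP(\EE))\to A(Y)$ is injective. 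I would then set $f:=\phi\circ g\colon Y\to X$.

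It remains to verify the two conclusions, which is routine. For (1): since pulling back a bundle simply precomposes its transition cocycle with the morphism, pullback commutes with composition of morphisms and sends block-diagonal cocycles to block-diagonal cocycles, so $f^*\EE\cong g^*\phi^*\EE\cong g^*\OO_\EE(-1)\oplus g^*\QQ_\EE$, which is a direct sum of line bundles by construction. For (2): functoriality of the Chow pullback gives $f^*=g^*\circ\phi^*$; the map $\phi^*$ is injective by Lemma \ref{splitpf} applied to the rank-$n$ bundle $\EE$, and $g^*$ is injective by the inductive hypothesis, so the composite $f^*$ is injective.

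The real content of the argument has already been isolated in Lemma \ref{splitpf}, whose proof reduces injectivity of $\phi^*$ to a fiberwise computation of $\phi_*(c_1(\OO_\EE(1))^{n-1})$. Consequently I do not expect a serious obstacle here; what remains are bookkeeping points — that $\TP(\EE)$ genuinely carries the structure of a tropical cycle so that its Chow ring and the intersection-theoretic formalism apply (already implicit in the statement of Lemma \ref{splitpf}), that the Chow pullback is functorial under composition, and that the rank strictly drops at each step so the induction terminates after $n-1$ projectivizations. The only mildly delicate part is tracking the degree shift from $A_k(X)$ to $A_{k+n-1}(\TP(\EE))$ through the iterated construction, but working with the total Chow ring $A(X)=\bigoplus_k A_k(X)$ renders this harmless.
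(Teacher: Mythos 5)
Your proof is correct and follows essentially the same route as the paper: iterated projectivization, the short exact sequence of Lemma \ref{ses} together with the splitting of all tropical short exact sequences to peel off a line bundle, and Lemma \ref{splitpf} to get injectivity of each pullback. Your inductive phrasing, with the explicit factorization $f^* = g^* \circ \phi^*$ as a composition of injections, is just a cleaner bookkeeping of the same tower the paper draws.
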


\begin{proof}
We proceed with the classical \textit{splitting construction} adapted to our situation. At each step, $\TP(\EE_i)$ is a rational polyhedral space (with boundary from both the base and the fibers), and Lemma \ref{splitpf} applies via the projection formula. We construct $f$ by induction on the rank of $\EE$. Projectivization provides

\[\begin{tikzcd}
	{\varphi^*(\mathcal{E})} & {\mathcal{E}} \\
	{\mathbb{T}\mathbb{P}(\mathcal{E})} & X
	\arrow[""', dashed, from=1-1, to=2-1]
	\arrow["", dashed, from=1-1, to=1-2]
	\arrow["p", from=2-1, to=2-2]
	\arrow["\pi", from=1-2, to=2-2]
\end{tikzcd}\]

\noindent where the pullback $p^*\mathcal{E}$ fits into the short exact sequence
\begin{equation*}
0  \longrightarrow \mathcal{O}_{\TP (\mathcal{E})}(-1) \longrightarrow p^*\mathcal{E} \longrightarrow \mathcal{Q} \longrightarrow 0.
\end{equation*}
If $\mathcal{E}$ has rank $2$, then the quotient bundle $\mathcal{Q} = p^*(\mathcal{E})/\mathcal{O}_{\TP(\mathcal{E})} (-1)$ is a line bundle. Therefore we may take $Y = \TP(\EE)$ as $p^*\EE \cong \mathcal{O}_{\TP(\EE)} (-1) \oplus \mathcal{Q}$ and $p^*$ is injective by Lemma \ref{splitpf}. Otherwise, we may iterate this construction by repeated projectivization and quotienting:

\[\begin{tikzcd}[column sep=small]
	{\overbrace{\varphi_n^*(\mathcal{E}_{n-1})/\mathcal{O}_{{\mathcal{E}}_{n-1}}(-1)}^{\mathcal{E}_n}} && {\overbrace{\varphi_2^*(\mathcal{E}_1)/\mathcal{O}_{{\mathcal{E}}_1}(-1)}^{\mathcal{E}_2}} & {\overbrace{\varphi_1^*(\mathcal{E}_0)/\mathcal{O}_{{\mathcal{E}}_0}(-1)}^{\mathcal{E}_1}} & {\mathcal{E}_0=\mathcal{E}} \\
	{\underbrace{\mathbb{T}\mathbb{P}(\mathcal{E}_n)}_{Y_n}} & \cdots & {\underbrace{\mathbb{T}\mathbb{P}(\mathcal{E}_1)}_{Y_2}} & {\underbrace{\mathbb{T}\mathbb{P}(\mathcal{E}_0)}_{Y_1}} & X
	\arrow[from=1-5, to=2-5]
	\arrow["{p_1}"', from=2-4, to=2-5]
	\arrow[from=1-4, to=2-4]
	\arrow["{p_2}"', from=2-3, to=2-4]
	\arrow["{p_3}"', from=2-2, to=2-3]
	\arrow[from=1-3, to=2-3]
	\arrow["{p_n}"', from=2-1, to=2-2]
	\arrow[from=1-1, to=2-1]
\end{tikzcd}\]
Then taking $Y = Y_n$, we have that the pullback decomposes into a direct sum  $f^*\EE = \bigoplus_{j=1}^n \mathcal{L}_j$, and that $f^*: A(X) \to A(Y)$ is the desired injective map. 

\end{proof}

This allows us to write
$$c_{[t]}(\EE) = \prod_{j=1}^n (1+c_1(\LL_j)t)$$
when computing with Chern classes. We call $c_1(\LL_1), \dots, c_1(\LL_n)$ the \textit{virtual Chern roots} of $\EE$; they live in $A(Y)$ rather than $A(X)$, but the injectivity of $f^*$ ensures that identities proved in $A(Y)$ descend to $A(X)$.

\section{Porteous formula}

\subsection{Sylvester determinants}

The goal of Porteous' formula is to express the class of the $k$th degeneracy locus of a morphism $\phi: \mathcal{E} \to \mathcal{F}$ of tropical vector bundles over $X$, $[{D}_k^\phi(\mathcal{E}, \mathcal{F})] \in A_{(e-k)(f-k)}(X)$, in terms of the Chern classes of $\mathcal{E}$ and $\mathcal{F}$ alone. This expression involves the Sylvester determinant, which we now define.

\begin{definition}[Sylvester determinant]
Let $\mathbf{c} = (c_0,c_1,c_2,\dots)$ be a sequence of elements in a commutative ring $R$. For integers $e,f$, we define the \textit{Sylvester determinant} of $\mathbf{c}$ of order $f$ and degree $e$ to be the element

$$\Delta_f^e(\mathbf{c}) = \det \begin{bmatrix} c_f & c_{f+1} & \cdots & c_{f+e-1} \\
c_{f-1} & c_f & \cdots & c_{f+e-2} \\
\vdots & \vdots & \ddots & \vdots \\
c_{f-e+1} & c_{f-e+2} & \cdots & c_f \end{bmatrix}$$

in the commutative ring $R$. We write $S_f^e(\mathbf{c})$ for the $e \times e$-matrix above.
\end{definition}

Let $a(t) = \sum_{i=0}^e a_it^i$ and $b(t) = \sum_{j=0}^f b_jt^j$ be two polynomials of degrees $e$ and $f$ over a polynomial ring $R[t]$. Assume that $a(0) = a_0 = 1$ and $b(0) = b_0 = 1$. We then have over the appropriate ring extension $R \subseteq S$ that we may factor these polynomials into linear terms
$$a(t) = \prod_{i=1}^e (1+\alpha_i t) \quad \text{ and } \quad b(t) = \prod_{j=1}^f (1+\beta_j t)$$
for coefficients $\alpha_i, \beta_j \in S$.

\begin{lemma} \label{sylvester}
In this setting,
$$ \Delta_f^e\left( \frac{b(t)}{a(t)} \right) = \prod_{\begin{smallmatrix} 1 \leq i \leq e \\ 1 \leq j \leq f \end{smallmatrix}  } (\beta_j - \alpha_i)$$
where $\frac{b(t)}{a(t)}$ is identified with its sequence of coefficients in the ring of power series.
\end{lemma}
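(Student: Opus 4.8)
The plan is to prove this as a purely algebraic identity over the ring $S$ where both polynomials split into linear factors, so the Chern-class interpretation plays no role here — it is simply an identity about the Sylvester determinant applied to the coefficient sequence of $b(t)/a(t)$. First I would reduce to the universal case: treat $\alpha_1,\dots,\alpha_e,\beta_1,\dots,\beta_f$ as independent indeterminates over $\mathbb{Z}$, so that proving the identity in $\mathbb{Z}[\alpha_i,\beta_j]$ (or its fraction field) suffices by specialization. Write $c(t) = b(t)/a(t) = \sum_{k \geq 0} c_k t^k$ for the power series quotient; the Sylvester determinant $\Delta_f^e(\mathbf{c})$ is then the $e \times e$ Toeplitz determinant with $(p,q)$ entry $c_{f-p+q}$ (with the convention $c_k = 0$ for $k < 0$).

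The key structural fact I would invoke is a classical resultant identity: the determinant $\Delta_f^e(\mathbf{c})$ is, up to sign and up to the leading coefficient of $a(t)$, the resultant $\mathrm{Res}(a,b)$ of the two polynomials — this is exactly the shape of the Sylvester matrix, hence the name. More precisely, for $c(t) = b(t)/a(t)$ one has the Toeplitz/Hankel determinant formula $\Delta_f^e(\mathbf{c}) = \prod_{i,j}(\beta_j - \alpha_i)$ directly, since $a(t) = \prod_i(1+\alpha_i t)$ has all roots $-1/\alpha_i$ and $b(t) = \prod_j(1+\beta_j t)$ has roots $-1/\beta_j$. The cleanest route that avoids citing a black-box resultant theorem is induction on $e$ (the degree/size of the determinant): for $e = 0$ the determinant is the empty product $= 1$, matching the empty right-hand side. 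For the inductive step, I would factor $a(t) = a'(t)(1+\alpha_e t)$ where $a'(t) = \prod_{i=1}^{e-1}(1+\alpha_i t)$, write $c(t) = c'(t)/(1+\alpha_e t)$ with $c'(t) = b(t)/a'(t)$, and hence $c'_k = c_k + \alpha_e c_{k-1}$, i.e. the coefficient sequences are related by $c'_k = c_k + \alpha_e c_{k-1}$. This relation lets me perform column operations on the Sylvester matrix $S_f^e(\mathbf{c})$: adding $\alpha_e$ times one column to the adjacent column converts a row of $c$'s into a row of $c'$'s in a controlled way, so that $\det S_f^e(\mathbf{c})$ collapses (after the telescoping of column operations and one Laplace expansion along the last row or column) to $\Big(\prod_{j=1}^f(\beta_j - \alpha_e)\Big) \cdot \det S_f^{e-1}(\mathbf{c}')$, at which point the induction hypothesis applied to $a'(t)$ and $b(t)$ finishes it.

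The main obstacle — and the step deserving genuine care rather than a wave of the hand — is making the column-operation bookkeeping precise: one must check that the relation $c'_k = c_k + \alpha_e c_{k-1}$, applied via a sequence of elementary column additions to the Toeplitz matrix $S_f^e(\mathbf{c})$, really does produce a matrix whose last row (or column) has a single nonzero-relevant entry up to a factor $\prod_j(\beta_j - \alpha_e)$, and that the remaining minor is exactly $S_f^{e-1}(\mathbf{c}')$ with no stray shift in indices. An alternative, possibly slicker, finish is to symmetrize: having shown $\Delta_f^e(\mathbf{c})$ is divisible by $\prod_j(\beta_j - \alpha_e)$ for each $i=e$ (and by symmetry of the construction, by $\prod_j(\beta_j-\alpha_i)$ for every $i$), one argues by degree count in the $\beta_j$'s and $\alpha_i$'s that $\Delta_f^e(\mathbf{c}) = \lambda \prod_{i,j}(\beta_j - \alpha_i)$ for a constant $\lambda \in \mathbb{Z}$, then pins down $\lambda = 1$ by evaluating at a convenient point (e.g. all $\alpha_i = 0$, where $c(t) = b(t)$, $c_k = e_k(\beta)$, and the Toeplitz determinant of elementary symmetric functions is $\prod_j \beta_j^{?}$ — actually $\det S_f^e = \prod_j \beta_j$ is false in general, so one picks instead all $\beta_j$ large/generic and reads off the top-degree coefficient). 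I would present the induction as the primary argument and mention the degree-and-leading-coefficient argument as the sanity check fixing the constant.
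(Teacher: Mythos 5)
First, a point of reference: the paper does not actually prove this lemma --- it simply cites \cite[Proposition 12.2]{3264} --- so any self-contained argument you give is already doing more than the text. That said, your primary argument, the induction on $e$ via $c'_k = c_k + \alpha_e c_{k-1}$, has a genuine gap exactly where you flag it. Performing the row operations $R_p \mapsto R_p + \alpha_e R_{p+1}$ for $p = 1,\dots,e-1$ (or the analogous column operations) converts the first $e-1$ rows of $S_f^e(\mathbf{c})$ into the corresponding rows of $S_f^e(\mathbf{c}')$, but the last row remains the full row $(c_{f-e+1},\dots,c_f)$ of \emph{original} coefficients: no single entry is isolated, the Laplace expansion along that row has $e$ terms, and the determinant does not visibly collapse to $\bigl(\prod_{j}(\beta_j-\alpha_e)\bigr)\det S_f^{e-1}(\mathbf{c}')$. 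Some additional identity is needed to make the telescoping work, and you have not supplied it; as written the induction does not close.

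Your fallback (divisibility plus degree count) is the standard proof and does work, but the two steps you wave at are precisely the content. (i) Divisibility: specializing $\beta_j = \alpha_i$ gives $c(t) = B(t)/A(t)$ with $\deg A \le e-1$ and $\deg B \le f-1$; then $A(t)c(t) = B(t)$ forces $\sum_{m=0}^{e-1} A_m c_{k-m} = 0$ for all $k \ge f$, and taking $k = f+e-p$ for $p=1,\dots,e$ exhibits the nontrivial column relation $\sum_{q=1}^{e} A_{e-q}\,(\text{column } q) = 0$ in $S_f^e(\mathbf{c})$ (nontrivial since $A_0 = 1$), so the determinant vanishes. This vanishing lemma is the heart of the matter and is absent from your sketch; once you have it, each irreducible $\beta_j - \alpha_i$ divides $\Delta_f^e(\mathbf{c})$ in $\ZZ[\alpha,\beta]$, hence so does their product. (ii) Normalization: each $c_k$ is homogeneous of degree $k$, so every term of the determinant has degree $\sum_p(f-p+\sigma(p)) = ef$, matching the right-hand side, and the quotient is a constant; setting all $\alpha_i = 0$ gives $c_k = e_k(\beta)$ and, by the dual Jacobi--Trudi identity, $\det\bigl(e_{f-p+q}(\beta)\bigr) = s_{(e^f)}(\beta_1,\dots,\beta_f) = (\beta_1\cdots\beta_f)^e$, which equals $\prod_{i,j}\beta_j$. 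So the constant is $1$ --- your worry that the specialized determinant is not $\prod_j\beta_j$ dissolves once you compute the correct value $(\prod_j\beta_j)^e$. I would recommend promoting this second argument to the primary one and discarding the column-operation induction.
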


\begin{proof}
This is proven as Proposition 12.2 of \cite{3264}. 
\end{proof}

\subsection{Degeneracy Loci}

Now given a morphism $\phi: \mathcal{E} \to \mathcal{F}$ of tropical vector bundles over a rational polyhedral space $X$, we have by Corollary \ref{rank-upper-semi} that the rank function $\rank(\phi): |X| \to \ZZ$ is upper semicontinuous. By Lemma \ref{rank-morph}, the sublevel sets inherit a rational polyhedral structure from $X$. We now define degeneracy loci and exhibit them as subcycles of $X$.

\begin{definition}
Let $X$ be a rational polyhedral space and $\phi: \mathcal{E} \to \mathcal{F}$ a morphism of tropical vector bundles of ranks $e$ and $f$ over $X$. We define the \textit{$k$th tropical degeneracy locus} to be
$$D_k(\phi) = \{ p \in X \ | \  \troprank(\phi_p) \leq k \}.$$
\end{definition}

\begin{proposition} \label{degen-subcycle}
With notation as above, $D_k(\phi)$ is a rational polyhedral subspace of $X$. On the interior $X^\circ$, $D_k(\phi) \cap X^\circ$ is a union of connected components. At sedentary strata of $X$, the rank can drop, giving $D_k(\phi)$ additional faces of positive codimension. The expected codimension of $D_k(\phi)$ is $(e-k)(f-k)$.
\end{proposition}

\begin{proof}
By Lemma \ref{rank-morph}, $D_k(\phi)$ inherits the structure of a rational polyhedral subspace of $X$. On the interior, rank is locally constant by Corollary \ref{rank-upper-semi}, so $D_k(\phi) \cap X^\circ$ is a union of connected components. At sedentary strata, entries of the local matrix $A_i(p)$ can degenerate to $-\infty$, causing the tropical rank to drop below that of nearby interior points. These sedentary faces give $D_k(\phi)$ faces of positive codimension. The expected codimension $(e-k)(f-k)$ is the tropical analogue of the classical expectation: at a generic point of $D_k(\phi)$, exactly $(e-k)(f-k)$ independent conditions are imposed by requiring the rank to be at most $k$.
\end{proof}

Throughout this section, let us assume we have a morphism of tropical vector bundles $\phi: \mathcal{E} \to \mathcal{F}$ over a rational polyhedral space $X$, where $\rank \ \mathcal{E} = e$ and $\rank \ \mathcal{F} = f$.

\subsection{The Hom bundle}

We now establish the key link between morphisms and sections: a morphism $\phi: \EE \to \FF$ is equivalently a bounded rational section of the Hom bundle $\Hom(\EE,\FF) \cong \EE^\vee \otimes \FF$.

\begin{proposition} \label{hom-section}
Given a morphism $\phi: \EE \to \FF$ of tropical vector bundles over $X$ whose matrix entries are bounded (i.e., the finite entries of each local representative $A_i: U_i \to \G(f \times e)$ are bounded functions on $U_i$), there is a canonical bounded rational section $\sigma_\phi$ of $\EE^\vee \otimes \FF$.
\end{proposition}

\begin{remark}
The bounded-entries hypothesis holds automatically when $|X|$ is compact, since continuous functions on compact sets are bounded. In particular, it is satisfied in all applications where the base space is a compact rational polyhedral space (e.g., tropical curves, tropical toric varieties).
\end{remark}

\begin{proof}
On a local trivialization $U_i$, the morphism $\phi$ is represented by $A_i: U_i \to \G(f \times e)$ whose entries are regular invertible functions on $U_i$ or constantly $-\infty$. We define $\sigma_\phi$ locally by viewing $A_i(p)$ as an element of $\TT^{ef}$ (listing the $ef$ entries of the matrix). The transition behavior of $\phi$ (Lemma \ref{stalk-invariant}),
$$A_j(p) = M_{ij}^\FF(p) \odot A_i(p) \odot (M_{ij}^\EE(p))^{-1},$$
is precisely the transition law for sections of $\EE^\vee \otimes \FF$, since the transition maps of $\EE^\vee \otimes \FF$ are $(M_{ij}^\EE)^{-T} \otimes M_{ij}^\FF$, which acts on a matrix $A \in \TT^{ef}$ as $M_{ij}^\FF \odot A \odot (M_{ij}^\EE)^{-1}$. The entries of $A_i$ that are not constantly $-\infty$ are bounded by hypothesis, so $\sigma_\phi$ is a bounded rational section.
\end{proof}

\begin{proposition} \label{zero-locus}
$D_0(\phi) = Z(\sigma_\phi)$, where $Z(\sigma_\phi)$ denotes the zero locus of $\sigma_\phi$ (the locus where all $ef$ components equal $-\infty$).
\end{proposition}

\begin{proof}
By definition, $D_0(\phi) = \{p \in X \mid \troprank(\phi_p) = 0\}$, which is the set of points where $A_i(p)$ has all entries equal to $-\infty$. This is precisely the vanishing locus of $\sigma_\phi$.
\end{proof}

\begin{proposition} \label{top-chern-degen}
If $D_0(\phi)$ has the expected codimension $ef$ in $X$, then
$$[D_0(\phi)] = c_{ef}(\EE^\vee \otimes \FF) \cdot [X].$$
\end{proposition}

\begin{proof}
By definition of Chern classes (cf.~the definition of $c_k$ above), $c_{ef}(\EE^\vee \otimes \FF) \cdot [X] = [\sigma^{(ef)} \cdot X]$ for any bounded rational section $\sigma$ of $\EE^\vee \otimes \FF$. Since $\sigma_\phi$ is such a section (Proposition \ref{hom-section}), by the independence of Chern classes from the choice of section (Theorem \ref{indep}):
$$c_{ef}(\EE^\vee \otimes \FF) \cdot [X] = [\sigma_\phi^{(ef)} \cdot X].$$
We claim that $[\sigma_\phi^{(ef)} \cdot X] = [D_0(\phi)]$. For support: on the complement $|X| \setminus D_0(\phi)$, at least one component $(\sigma_\phi)_j$ is a regular invertible function in each local trivialization (since $p \notin D_0(\phi)$ means not all entries of $A_i(p)$ are $-\infty$). But the Weil divisor of a regular invertible function vanishes, so the iterated intersection $\sigma_\phi^{(ef)} \cdot X$ receives no contribution from $|X| \setminus D_0(\phi)$, and is therefore supported on $Z(\sigma_\phi) = D_0(\phi)$. For the cycle structure: the expected-codimension hypothesis ensures $\dim D_0(\phi) = \dim X - ef$, matching the dimension of $\sigma_\phi^{(ef)} \cdot X$, so the intersection is proper and $[\sigma_\phi^{(ef)} \cdot X] = [D_0(\phi)]$ as cycles in $A_{\dim X - ef}(X)$.
\end{proof}

\subsection{The rank-$0$ case} In the base case of Porteous' formula, we have
$$D_0(\phi) = \{p \in X \ | \  \troprank(\phi_p) = 0\}.$$
The $0$th degeneracy locus consists of all points $p \in X$ where $\phi_p$ is an $(f \times e)$-matrix with all entries equal to $-\infty$ (the tropical analogue of a zero matrix). Equivalently,
$$D_0(\phi) = \{p \in X \ | \ \phi_p: \mathcal{E}_p \to \mathcal{F}_p \text{ is identically $-\infty$}\}.$$
Note, this \textit{is not} the same as asking that $f_{\phi_p}: \RR^e \to \RR^f$ is identically zero! Above, we are specifically referencing the stalk map, so $\mathcal{E}_p = \TT^e$ and $\mathcal{F}_p = \TT^f$.

\begin{theorem}[Tropical Porteous Formula in Rank $0$] \label{main-thm}
Consider a morphism of tropical vector bundles $\phi: \mathcal{E} \to \mathcal{F}$ with bounded matrix entries over a rational polyhedral space $X$, where the bundles have ranks $e$ and $f$, respectively. Suppose $D_0(\phi)$ has the expected codimension $ef$. Then
$$[D_0^\phi (\mathcal{E}, \mathcal{F})] = \Delta_f^e \left( \frac{c_{[t]}(\mathcal{F})}{c_{[t]}(\mathcal{E})} \right),$$
where $c_{[t]}(-)$ denotes the Chern polynomial of the vector bundle.
\end{theorem}

\begin{proof}
By Proposition \ref{top-chern-degen}, the expected-codimension hypothesis gives $[D_0(\phi)] = c_{ef}(\mathcal{E}^\vee \otimes \mathcal{F}) \cdot [X]$. It remains to show that $c_{ef}(\EE^\vee \otimes \FF)$ equals the Sylvester determinant $\Delta_f^e(c_{[t]}(\FF)/c_{[t]}(\EE))$. By the splitting principle (Theorem \ref{splitting}), there exists a rational polyhedral space $Y$ and a morphism $f: Y \to X$ such that $f^*: A(X) \to A(Y)$ is an injective map. Na\"ively identifying $A(X)$ with its image in $A(Y)$, as we have that
$$c(\EE) = f^*(c(\EE)) = f^*\left(\sum_{i=0}^e c_i(\EE)\right) = \sum_{i=0}^e c_i(f^*\EE) = c(f^{*}(\EE)),$$
and $f^*\mathcal{E}$ splits into a direct sum of line bundles, say $f^*\EE = \bigoplus_{i=1}^e \mathcal{L}_i$, we may conclude from Whitney's formula that
$$c(\EE) = c(f^*(\EE)) = c\left(\bigoplus_{i=1}^e \mathcal{L}_i\right) = \prod_{i=1}^e (1 + c_1(\mathcal{L}_i)).$$
So, moving forward, with even greater na\"ivity, we write $\EE = \bigoplus_{i=1}^e \mathcal{L}_i$ (respectively $\FF = \bigoplus_{i=1}^f \MM_j$) with $\alpha_i = c_1(\LL_i)$ and $\beta_j = c_1(\MM_j)$. From this, it follows that
$$\EE^\vee \otimes \FF = \left( \bigoplus_{i=1}^e \LL_i^\vee \right) \otimes \left( \bigoplus_{j=1}^f \MM_j \right) = \bigoplus_{ij} (\LL_i^\vee \otimes \MM_j)$$
and since $c_1(\LL_i^\vee \otimes \MM_j) = -\alpha_i + \beta_j$ by Theorem \ref{c1-simple}, we finally have that
\begin{align*}
c(\EE^\vee \otimes \FF) &= c\left( \bigoplus_{ij} \LL_i^\vee \otimes \MM_j \right) &\text{by the splitting principle} \\
&= \prod_{ij} (1+\underbrace{(-\alpha_i + \beta_j)}_{\gamma_{ij}}) &\text{by Whitney's formula.}
\end{align*}
We then have that the top Chern class $c_{ef}(\EE^\vee \otimes \FF)$ is the highest degree term in the above expression, namely the product $\prod_{ij} \gamma_{ij}$. Lastly, we have that the Chern polynomials of $\EE$ and $\FF$ factor as
$$c_{[t]}(\EE) = \prod_{i=1}^e (1+ \alpha_i t) \quad \text{ and } \quad c_{[t]}(\FF) = \prod_{j=1}^f (1+\beta_j t).$$
So we can then conclude by Lemma \ref{sylvester} that
$$c_{ef}(\EE^\vee \otimes \FF) = \prod_{ij}(\beta_j - \alpha_i) = \Delta_f^e \left( \frac{c_{[t]} (\FF)}{ c_{[t]}(\EE) } \right),$$
as we had desired.
\end{proof}

\begin{example}
    Let us use the above formula for a simple instance of a computation. Consider a vector bundle morphism $\phi: \EE \to \OO_{\TP^1}$ from a rank $e$ bundle to the trivial line bundle over the tropical projective line $\TP^1$. By Proposition \ref{p1-split}, the bundle $\EE$ splits over $\TP^1$ into a direct sum $\bigoplus_{i=1}^e \mathcal{O}_{\TP^1}(d_i)$ for some tuple of integers $(d_1,\dots,d_e) \in \ZZ^e$. Carrying out Porteous formula, we have that
    $$[D_0^\phi(\EE,\OO_{\TP^1})] = \left[D_0^\phi\left( \bigoplus_{i=1}^e \mathcal{O}_{\TP^1}(d_i), \OO_{\TP^1} \right)\right] = \Delta_1^e \left( \frac{c_{[t]}(\OO_{\TP^1})}{c_{[t]} \left(  \bigoplus_{i=1}^e \mathcal{O}_{\TP^1}(d_i) \right)} \right).$$
    Expanding out the Chern polynomials, we have that this may be expressed as
    $$\Delta_1^e \left( \frac{c_0(\OO_{\TP^1}) + c_1(\OO_{\TP^1}) t}{\prod_{i=1}^e (1+c_1(\mathcal{O}_{\TP^1}(d_i))t)} \right)$$
    where the splitting into products downstairs is through Corollary \ref{whitney-cor}. We know that $c_0(\OO_{\TP^1}) = 1$, $c_1(\OO_{\TP^1}) = c_1(\OO_{\TP^1}(0)) = 0$, and $c_1(\mathcal{O}_{\TP^1}(d_i)) = d_i$ (Theorem \ref{serre-twist}). Using Lemma \ref{sylvester}, we may conclude that
    $$[D_0^\phi(\EE,\OO_{\TP^1})]= \Delta_1^e \left( \frac{1}{\prod_{i=1}^e (1+d_it)} \right) = (-1)^e d_1\cdots d_e.$$
\end{example}

\section{Future Directions}

We conclude by addressing two questions: what is needed to extend the rank-$0$ result to the full Porteous formula for arbitrary $D_k(\phi)$, and what motivates a tropical Porteous formula? The first leads to a discussion of the tropical Grassmannian and its role in reducing higher-rank degeneracy loci to the rank-$0$ case. The second returns to classical algebraic geometry and a conjecture that was the primary motivation for this work.

\subsection{Higher Rank Porteous}

In the classical setting, the full Porteous formula for $[D_k(\phi)]$ is deduced from the rank-$0$ case by a \textit{Grassmann bundle reduction}: one replaces the original morphism $\phi: \EE \to \FF$ with a new morphism on a larger space whose rank-$0$ locus maps onto $D_k(\phi)$. We outline how this strategy would proceed tropically, and identify the key ingredients that remain to be constructed.

\medskip

\noindent \textbf{The classical strategy.} Given $\phi: \EE \to \FF$ over $X$ with $\rank \EE = e$ and $\rank \FF = f$, and a fixed integer $0 \leq k < \min(e,f)$, the classical proof proceeds as follows:
\begin{enumerate}
    \item Form the Grassmann bundle $\rho: \Gr(e-k, \EE) \to X$, whose fiber over $x \in X$ parametrizes $(e-k)$-dimensional subspaces of $\EE_x$. Over $\Gr(e-k,\EE)$ there is a \textit{universal exact sequence}
    $$0 \to \mathcal{S} \to \rho^* \EE \to \mathcal{Q} \to 0,$$
    where $\mathcal{S}$ is the tautological subbundle of rank $e-k$, with $\mathcal{S}_{(x,K_x)} = K_x$ and $\mathcal{Q}_{(x,K_x)} = \EE_x / K_x$.
    \item Restrict $\rho^*\phi$ to $\mathcal{S}$ to obtain $\psi = \rho^*\phi|_{\mathcal{S}}: \mathcal{S} \to \rho^*\FF$, a morphism of ranks $e-k$ and $f$.
    \item Observe that $D_0(\psi) = \{(x,K_x) \mid K_x \subseteq \ker \phi_x\}$ maps onto $D_k(\phi)$ under $\rho$, since $\rank(\phi_x) \leq k$ if and only if $\ker \phi_x$ contains an $(e-k)$-plane.
    \item Apply the rank-$0$ formula: $[D_0(\psi)] = c_{(e-k)f}(\mathcal{S}^\vee \otimes \rho^*\FF) \cdot [\Gr(e-k,\EE)]$.
    \item Push forward via $\rho_*$ and compute using the Giambelli formula to obtain
    $$[D_k(\phi)] = \rho_*[D_0(\psi)] = \Delta_{f-k}^{e-k}\left(\frac{c_{[t]}(\FF)}{c_{[t]}(\EE)}\right).$$
\end{enumerate}
Note that the projectivization $\TP(\EE)$ used in the splitting principle is precisely the case $k = e-1$ of this construction, i.e.\ $\TP(\EE) = \Gr(1, \EE)$.

\medskip

\noindent \textbf{Tropical ingredients needed.} To carry out this program tropically, one would need:
\begin{enumerate}
    \item[\textbf{(I)}] A \textit{tropical Grassmann bundle} $\rho: \trop(\Gr(d, \EE)) \to X$ whose fiber over $x$ is $\trop(\Gr(d, \EE_x))$, realized as a rational polyhedral space. The Speyer--Sturmfels bijection between $\trop(G(d,n))$ and tropical $d$-planes \cite{tropgrass} gives candidates for the fibers, but assembling them into a bundle with the correct sedentary structure remains open.
    \item[\textbf{(II)}] A \textit{universal exact sequence} $0 \to \mathcal{S} \to \rho^*\EE \to \mathcal{Q} \to 0$ over $\trop(\Gr(e-k, \EE))$, with $\mathcal{S}_{(x,K_x)} = K_x$ the tautological subbundle of rank $e-k$.
    \item[\textbf{(III)}] A \textit{projection formula} for $\rho_*$, generalizing Lemma \ref{splitpf} from $\TP(\EE) = \Gr(1,\EE)$ to the higher Grassmann bundle.
    \item[\textbf{(IV)}] A \textit{Giambelli-type computation} verifying $\rho_*(c_{(e-k)f}(\mathcal{S}^\vee \otimes \rho^*\FF)) = \Delta_{f-k}^{e-k}(c_{[t]}(\FF)/c_{[t]}(\EE))$, which is formal given \textbf{(I)}--\textbf{(III)}.
\end{enumerate}
Of these, \textbf{(I)} is the essential obstacle: it requires globalizing the tropical Grassmannian into a fiber bundle compatible with the Mikhalkin--Rau framework.

\begin{question}
    Can one construct a tropical Grassmann bundle $\trop(\Gr(d,\EE)) \to X$ as a rational polyhedral space satisfying \textbf{(I)}--\textbf{(II)} above?
\end{question}

\medskip

\noindent \textbf{An alternative: iterated projectivization.} Since the projectivization $\TP(\EE) = \Gr(1,\EE)$ is already available, one could attempt to reduce $D_k$ to $D_0$ through iterated projectivization rather than a single Grassmann bundle construction. Concretely: over $\TP(\EE)$, the tautological sequence $0 \to \mathcal{O}_{\TP(\EE)}(-1) \to p^*\EE \to \mathcal{Q} \to 0$ and the composition $\mathcal{O}_{\TP(\EE)}(-1) \hookrightarrow p^*\EE \xrightarrow{p^*\phi} p^*\FF$ relate $D_{e-1}(\phi)$ to a zero locus upstairs. Iterating this on the successive quotients $\mathcal{Q}$ reduces $D_k$ to $D_0$ after $e-k$ steps, at the cost of working over a flag bundle rather than a Grassmann bundle. This avoids ingredient \textbf{(I)} entirely --- the splitting principle already constructs the relevant flag bundle --- but requires tracking iterated pushforwards and verifying the resulting expression simplifies to the Sylvester determinant.

\subsection{Towards the Brill--Noether Conjecture} Let $W^r_d \subseteq \Pic^d(\Gamma)$ denote the locus of divisor classes of \textit{degree} $d$ and \textit{rank} at least $r$ (see, e.g., \cite{CDPR, LPN}).

\begin{conjecture}[\cite{farbod}] \label{farbod}

Assume $\rho = g - (r+1)(g-d+r) \geq 0$. Then there exists a canonical tropical subvariety $Z^r_d \subseteq W^r_d$ of pure dimension $\rho$ such that 
\[
[Z^r_d] = \left(\prod_{i=0}^r \frac{i!}{(g-d+r+i)!}\right) [\Theta]^{g-\rho} \ .
\]
modulo tropical homological equivalence.
\end{conjecture}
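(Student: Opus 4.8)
The plan is to mimic the classical Kempf--Kleiman--Laksov realization of $W^r_d$ as a degeneracy locus of a morphism of bundles on the Picard variety (see \cite{3264}, Chapter~12) and feed it through a higher-rank tropical Porteous formula. First I would fix a tropically effective divisor $E$ on $\Gamma$ of large degree $m$ and construct, over $\mathrm{Pic}^d(\Gamma)$, two tropical vector bundles whose fibres over a class $[L]$ are the tropical modules attached to $\Gamma(\Gamma, L(E))$ and $\Gamma(E, L(E)|_E)$, together with the restriction morphism $\phi\colon \mathcal{E}\to \mathcal{F}$. By the tropical Riemann--Roch theorem of Baker--Norine, for $m\gg 0$ the source has ``rank'' $d+m+1-g$ and the target has rank $m$, and the kernel of the stalk map $\phi_{[L]}$ detects $\Gamma(\Gamma, L)$, so that $Z^r_d$ should be exactly the degeneracy locus $D^\phi_{\,d+m-g-r}(\mathcal{E},\mathcal{F})$ in the notation of Section~5, of expected codimension $(r+1)(g-d+r)=g-\rho$. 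The first genuine obstacle is already here: turning ``tropical pushforward of a tropical line bundle along a projection'' into an honest tropical vector bundle in the sense of Definition~\ref{vbdef}, i.e.\ a tropical analogue of $R\pi_*$, since sections of tropical line bundles form semimodules rather than vector spaces and the naive fibrewise construction need not glue into a bundle with $\G(r)$-valued transition data. One expects to have to work with a tropical Poincar\'e line bundle on $\Gamma\times\mathrm{Pic}^d(\Gamma)$ and a careful choice of $E$ so that the relevant semimodules are free of constant rank over each cell.

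Second, I would compute the Chern classes. After the splitting principle (Corollary~\ref{splitting}) the ratio $c_{[t]}(\mathcal{F})/c_{[t]}(\mathcal{E})$ appearing in Porteous should have coefficients given by $\exp(-[\Theta]t)$, where $[\Theta]$ is the Riemann theta divisor class on the tropical Jacobian --- classically this is a Grothendieck--Riemann--Roch computation for $-R\pi_*\mathcal{P}$. Tropically I would instead try to read $c_1$ of the virtual line summands directly from the combinatorics of the tropical Poincar\'e bundle and the tropical theta function, verifying that $c_i(\mathcal{F}-\mathcal{E}) = (-1)^i [\Theta]^i/i!$. Granting this, Lemma~\ref{sylvester} turns the Sylvester determinant $\Delta^{e-k}_{f-k}$ into a product over the virtual Chern roots: the roots of $c_{[t]}(\mathcal{F})$ contribute trivially, and the repeated-root structure of $\exp(-[\Theta]t)$ collapses the determinant to $[\Theta]^{(r+1)(g-d+r)}$ times the combinatorial constant $\prod_{i=0}^r i!/(g-d+r+i)!$ --- precisely the Giambelli-type evaluation producing the factorials of Conjecture~\ref{farbod}.

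Third, and this is the structural gap flagged in this very section, the index $k=d+m-g-r$ is in general far from $0$, so Theorem~\ref{main-thm} does not apply directly: one needs the higher-rank tropical Porteous formula, which in turn requires a tropical Grassmann-bundle $\trop(G(d,\mathcal{E}))$ and a universal exact sequence $0\to\mathcal{S}\to\rho^*\mathcal{E}\to\mathcal{Q}\to 0$ reducing the rank by successive quotients, exactly as asked in the Question above. So the honest logical order is: (i) build $\trop(G(d,\mathcal{E}))$ and its universal sequence; (ii) prove higher-rank Porteous by iterating the rank-$0$ case of Theorem~\ref{main-thm} along the Grassmann tower, just as the projective-bundle tower is used in Corollary~\ref{splitting}; (iii) set up $\phi$ and identify $Z^r_d=D^\phi_k(\mathcal{E},\mathcal{F})$; (iv) run the Chern-class computation above.

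Finally I would address the dimension claim: Porteous gives the class only \emph{if} the locus has the expected codimension, so establishing that $Z^r_d$ is of pure dimension $\rho$ (equivalently that $\phi$ degenerates generically transversally) is the tropical counterpart of the hard half of Brill--Noether. I would attack it by a tropical transversality/dimension count along the cellular structure of $\mathrm{Pic}^d(\Gamma)$, using the known generic Brill--Noether behaviour for chains of loops \cite{CDPR} as a model, and where transversality genuinely fails interpreting $[Z^r_d]$ via a tropical Eagon--Northcott / excess-intersection correction. I expect step~(ii) --- higher-rank tropical Porteous via a workable tropical Grassmann bundle --- together with the tropical $R\pi_*$ of step~(i) to be the two main obstacles; once those are in place the Chern-class bookkeeping and the Sylvester-determinant evaluation are routine given Lemma~\ref{sylvester}, and the identity is then read off modulo tropical homological equivalence in the tropical Chow/homology ring of $\mathrm{Pic}^d(\Gamma)$.
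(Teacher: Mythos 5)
This statement is a \emph{conjecture} that the paper explicitly leaves open: it appears in the ``Future Directions'' section, attributed to \cite{farbod}, with only the $r=0$ case known (by \cite{gross2022effective}), and the paper merely poses the \emph{question} of whether the classical degeneracy-locus/Porteous strategy could be adapted. There is no proof in the paper to compare against, and your proposal is not a proof either --- it is a research program that follows essentially the same plan the paper itself sketches, and you candidly flag its missing ingredients yourself: a higher-rank tropical Porteous formula (the paper only proves the rank-$0$ case and explicitly lists the Grassmann-bundle construction and universal exact sequence as open), a tropical analogue of $R\pi_*$ of a Poincar\'e bundle, the Chern-class identity $c_i(\mathcal{F}-\mathcal{E})=(-1)^i[\Theta]^i/i!$, and the pure-dimensionality of $Z^r_d$. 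Each of these is a substantial open problem, so none of the four steps of your outline is actually established.

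Beyond the acknowledged gaps, there is a concrete structural obstruction inside the paper's own framework that your step (iii) runs into. By the corollary following Lemma \ref{stalk-invariant}, the tropical rank of a morphism of tropical vector bundles is \emph{locally constant} on $|X|$, and the proposition in Section 5 accordingly exhibits every degeneracy locus $D_k(\phi)$ as a union of connected components of $X$. Since $\mathrm{Pic}^d(\Gamma)$ is a connected real torus, any degeneracy locus of an Allermann-style bundle morphism over it is either empty or all of $\mathrm{Pic}^d(\Gamma)$; it can never be a positive-codimension subset such as $W^r_d$ (which should have codimension $g-\rho>0$ for $r\geq 1$). So the identification $Z^r_d = D^\phi_{d+m-g-r}(\mathcal{E},\mathcal{F})$ cannot hold literally with the definitions of this paper: before the Kempf--Kleiman--Laksov argument can even be set up, one must enlarge the notion of bundle morphism (or of rank) so that the rank can genuinely drop on a proper subset --- for instance by allowing the matrices $A_i$ to take values outside $\G(f\times e)$, where Lemma \ref{permmatrixrep} and Lemma \ref{troprank-props} no longer apply. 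This is the first wall your program hits, and it is prior to all the obstacles you list.
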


This conjecture has been proven in the case $r=0$, where $W_d^0 = W_d$ is pure-dimensional (\cite{gross2022effective}) and $Z_d^0 = W_d$. This echoes the classical statement for subvarieties $W_d \subset \Pic^d(C)$ parameterizing effective divisor classes of degree $d$ on a curve $C$, i.e., images of the Abel--Jacobi maps $u_d: C_d \to \Jac(C) \cong \Pic^d(C)$.

\begin{question}
    In the classical setting, the Brill--Noether theorem is proven by expressing $W_d^r$ as a degeneracy locus of an evaluation map of vector bundles over $\Pic^d(C)$ and then applying Porteous' formula (see \cite{3264}). Could one approach this conjecture in an analogous way?
\end{question}

\printbibliography

\end{document}